\documentclass[twoside,bezier,reqno]{amsart}

\usepackage{graphicx}
\usepackage{mathptmx}
\usepackage{amsmath}
\usepackage{amssymb}
\usepackage{enumitem}
\usepackage{xcolor}
\usepackage{dsfont}
\numberwithin{equation}{section}
\newtheorem{theorem}{Theorem}

\newtheorem{corollary}[theorem]{Corollary}

\newtheorem{definition}[theorem]{Definition}
\newtheorem{remark}{Remark}

\begin{document}

\title{On Certain Recurrence Relations for Generalized Poly-Cauchy Numbers and Polynomials}

\author{Ghania Guettai}
\address[G. Guettai]{LaMA Laboratory, Dr Yahia Far\`{e}s University of M\'{e}d\'{e}a 26000 M\'{e}d\'{e}a, Algeria}
\email{guettai78@yahoo.fr}
\author{Diffalah Laissaoui}
\address[D. Laissaoui]{LaMA Laboratory, Dr Yahia Far\`{e}s University of M\'{e}d\'{e}a 26000 M\'{e}d\'{e}a, Algeria}
\email{laissaoui.diffalah74@gmail.com}
\author{Mohamed Amine Boutiche}
\address[M. A. Boutiche]{USTHB, Faculty of Mathematics, P.O. Box 32, El Alia 16111, Algiers, Algeria}
\email{boutiche@gmail.com}

\author{Mourad Rahmani}
\address[M. Rahmani]{LaMA Laboratory, USTHB, Faculty of Mathematics, P.O. Box 32, El Alia 16111, Algiers, Algeria}
\email{mourad.rahmani@gmail.com, mrahmani@usthb.dz}

\begin{abstract}
The main objective of this paper is to present recurrence relations for the generalized poly-Cauchy numbers and polynomials. This is accomplished by introducing the concept of generalized m-poly-Cauchy numbers and polynomials. Additionally, the paper delves into the discussion of the corresponding generalized m-poly-Bernoulli numbers and polynomials that are associated with the aforementioned generalized m-poly-Cauchy numbers and polynomials.
\end{abstract}
\keywords{Cauchy numbers and polynomials, poly-Cauchy numbers, poly-Bernoulli numbers, recurrence relation, Stirling numbers.}
\subjclass[2010]{05A15, 11B73, 11B68. }
\maketitle

\section{Introduction}
Let $n\geq0,k\geq1$ be integers and let $a,q$ and $l_{1},\ldots,l_{k}$ be
non-zero real numbers, with $L=\left(  l_{1},\ldots,l_{k}\right)  $ and $l=%
{\textstyle\prod_{i=1}^{k}}l_{i}$. The Cauchy numbers of the first kind, denoted as $c_{n}$, and the second kind, represented as $\widehat{c}_{n}$, are defined in \cite{Comtet,Merlini} by the following formulas:
\[
c_{n}=\int_{0}^{1}x\left(  x-1\right)  \cdots\left(  x-n+1\right)  dx
\]
and
\begin{align*}
\widehat{c}_{n}  &  =\left(  -1\right)  ^{n}\int_{0}^{1}x\left(  x+1\right)
\cdots\left(  x+n-1\right)  dx.
\end{align*}

The generalized poly-Cauchy numbers of the first kind $c_{n}^{\left(
	k\right)  }\left(  a,q,L\right)  $ are defined in \cite{Komatsu1} by
\begin{multline}
c_{n}^{\left(  k\right)  }\left(  a,q,L\right)  =\int_{0}^{l_{1}}\cdots
\int_{0}^{l_{k}}\left(  x_{1}\cdots x_{k}\right)  ^{a}\left(  x_{1}\cdots
x_{k}-q\right) \\
\cdots\left(  x_{1}\cdots x_{k}-\left(  n-1\right)  q\right)  dx_{1}\cdots
dx_{k},\label{A}
\end{multline}
or by the following generating function
\[
l^{a}\operatorname*{Lif}\nolimits_{k}\left(  \frac{l\ln\left(  1+qx\right)
}{q};a\right)  =
{\displaystyle\sum\limits_{n\geq0}}
c_{n}^{\left(  k\right)  }\left(  a,q,L\right)  \frac{x^{n}}{n!},
\]
where $\operatorname*{Lif}\nolimits_{k}\left(  z;a\right)  $ denotes the
extended polylogarithm factorial function
\begin{equation}
\operatorname*{Lif}\nolimits_{k}\left(  z;a\right)  =
{\displaystyle\sum\limits_{n\geq0}}
\frac{1}{\left(  n+a\right)  ^{k}}\frac{z^{n}}{n!}. \label{K1}%
\end{equation}
 We can express
  $c_{n}^{\left(  k\right)  }\left(  a,q,L\right)$ in the following explicit expression:
\begin{equation}
c_{n}^{\left(  k\right)  }\left(  a,q,L\right)  =
{\displaystyle\sum\limits_{i=0}^{n}}
s\left(  n,i\right)  \frac{q^{n-i}l^{a+i}}{\left(  a+i\right)  ^{k}},
\label{F1}%
\end{equation}
in which $s\left(  n,i\right)  $ are the Stirling numbers of the
first kind, defined as follows:
\begin{equation}
\frac{1}{i!}\left(  \ln\left(  1+x\right)  \right)  ^{i}=
{\displaystyle\sum\limits_{n\geq i}}
s\left(  n,i\right)  \frac{x^{n}}{n!}, \label{F2}
\end{equation}
and given recursively by the formula
\[
s\left(  n+1,i\right)  =s\left(  n,i-1\right)  -ns\left(  n,i\right)  \text{
	\ }\left(  1\leq i\leq n\right)  .
\]
Note that, if $a=l=1,$ then $c_{n}^{\left(  k\right)  }\left(  1,q,\left(
1,\cdots,1\right)  \right)  :=c_{n,q}^{\left(  k\right)  }$ are the poly-Cauchy
numbers with a $q$ parameter introduced in \cite{Komatsu3} and, if we set $q=l=1$ in
(\ref{F1}) then, $c_{n}^{\left(  k\right)  }\left(  a,1,\left(  1,\cdots
,1\right)  \right)  :=c_{n}^{\left(  k\right)  }\left(  a\right)  $ are the
shifted poly-Cauchy numbers \cite{Komatsu2}.

Similar to $c_{n}^{\left(  k\right)  }\left(  a,q,L\right)$, the generalized
poly-Cauchy numbers of the second kind $\widehat{c}_{n}^{\left(  k\right)
}\left(  a,q,L\right)  $ are described in \cite{Komatsu1} by
\begin{multline}
\widehat{c}_{n}^{\left(  k\right)  }\left(  a,q,L\right)  =\left(  -1\right)
^{a-1}\int_{0}^{l_{1}}\cdots\int_{0}^{l_{k}}\left(  -x_{1}\cdots x_{k}\right)
^{a}\left(  -x_{1}\cdots x_{k}-q\right) \\
\cdots\left(  -x_{1}\cdots x_{k}-\left(  n-1\right)  q\right)  dx_{1}\cdots
dx_{k},\label{B}
\end{multline}
or explicitly as
\[
\widehat{c}_{n}^{\left(  k\right)  }\left(  a,q,L\right)  =
{\displaystyle\sum\limits_{i=0}^{n}}
s\left(  n,i\right)\left(  -1\right)  ^{i}  \frac{q^{n-i}l^{a+i}}{\left(
	a+i\right)  ^{k}}
\]
and have the generating function
\[
l^{a}\operatorname*{Lif}\nolimits_{k}\left(  -\frac{l\ln\left(  1+qx\right)
}{q};a\right)  =
{\displaystyle\sum\limits_{n\geq0}}
\widehat{c}_{n}^{\left(  k\right)  }\left(  a,q,L\right)  \frac{x^{n}}{n!}.
\]

In our research, our objective is to complete the study conducted in \cite{Komatsu1} by providing a straightforward method for calculating the integrals \eqref{A} and \eqref{B}. Subsequently, we extend all the results from \cite{Komatsu1} to the polynomial case. To achieve this, we introduce the concept of generalized $m$-poly-Cauchy numbers of the first and second kind.


We recall that, the combinatorial numbers which complement the $s\left(n,i\right)$ are the Stirling numbers of the second kind  $
\genfrac{\{}{\}}{0pt}{}{n}{i},
$ while  $\genfrac{\{}{\}}{0pt}{}{n}{i}_{r}$  introduced by Broder \cite{Broder} is an extension of $
\genfrac{\{}{\}}{0pt}{}{n}{i},$ and can be expressed by the exponential generating function
\begin{equation}
{\displaystyle\sum\limits_{n\geq i}}
\genfrac{\{}{\}}{0pt}{}{n+r}{i+r}_{r}
\frac{z^{n}}{n!}=\frac{1}{i!}e^{rz}\left(  e^{z}-1\right)  ^{i}. \label{GSti}
\end{equation}
Taking $r=0$ in \eqref{GSti}, we get $
\genfrac{\{}{\}}{0pt}{}{n}{i}.$ The properties
\begin{equation}
\genfrac{\{}{\}}{0pt}{0}{n}{r}_{r}=r^{n-r} \label{F4}
\end{equation}
and
\begin{equation}
\genfrac{\{}{\}}{0pt}{0}{n+r}{i+r}_{r}=\genfrac{\{}{\}}{0pt}{0}{n+r}{i+r}
_{r-1}-(r-1)
\genfrac{\{}{\}}{0pt}{0}{n+r-1}{i+r}
_{r-1}\label{F3}
\end{equation}
are given in \cite{Broder}.

\section{The generalized $m$-poly-Cauchy numbers}
We first introduce the generalized $m$-poly-Cauchy numbers of both kinds, then we give generating functions and recurrence formulas. Also, we prove some relations between the aforementioned two kinds of the generalized $m$-poly-Cauchy numbers.
\subsection{The generalized $m$-poly-Cauchy numbers of the first kind}
Let $n\geq0,m\geq0$ and $k\geq1$ be integers, and let $a,q$ and $l_{1}
,\ldots,l_{k}$ be non-zero real numbers, with $L=\left(  l_{1},\ldots
,l_{k}\right)  $ and $l=%
{\textstyle\prod_{i=1}^{k}}
l_{i}$. We give a sequence $\mathcal{C}_{n,m}^{\left(  k\right)  }\left(
a,q,L\right)  $ with two indices, which we call generalized $m$-poly-Cauchy numbers of the first kind, by

\begin{multline*}\label{A1}
\mathcal{C}_{n,m}^{\left(  k\right)  }\left(  a,q,L\right)  =\frac{\left(
		a+m\right)  ^{k}}{a^{k}}\int_{0}^{l_{1}}\cdots
\int_{0}^{l_{k}}\left(  x_{1}\cdots x_{k}\right)  ^{a+m}\left(  x_{1}\cdots
x_{k}-q\right) \\
\cdots\left(  x_{1}\cdots x_{k}-\left(  n-1\right)  q\right)  dx_{1}\cdots
dx_{k},
\end{multline*}

or simply by
	\[
	\mathcal{C}_{n,m}^{\left(  k\right)  }\left(  a,q,L\right)  =\frac{\left(
		a+m\right)  ^{k}}{a^{k}}%
		c_{n}^{\left(  k\right)  }\left(  a+m,q,L\right)
	\]

	Using \eqref{F1}, the generalized $m$-poly-Cauchy numbers of the first kind $\mathcal{C}_{n,m}^{\left(  k\right)  }\left(  a,q,L\right) $ may be expressed as%
	\[
	\mathcal{C}_{n,m}^{\left(  k\right)  }\left(  a,q,L\right)  =\frac{\left(
		a+m\right)  ^{k}}{a^{k}}%
		{\displaystyle\sum\limits_{i=0}^{n}}
	s\left(  n,i\right)  \frac{q^{n-i}l^{a+i}}{\left(  a+i+m\right)  ^{k}},\label{GBF1}
	\]
	with
	\[
	\mathcal{C}_{0,m}^{\left(  k\right)  }\left(  a,q,L\right)  =\frac{l^{a}%
	}{a^{k}}.
	\]

\begin{theorem}\label{N}
The expression for the generalized $m$-poly-Cauchy numbers of the first kind $\mathcal{C}_{n,m}^{\left( k\right) }\left( a,q,L\right) $ is given by
\begin{equation}
\mathcal{C}_{n,m}^{\left(  k\right)  }\left(  a,q,L\right)  =\frac{\left(
	a+m\right)  ^{k}}{l^{m}a^{k}}%
{\displaystyle\sum\limits_{j=0}^{m}}
\genfrac{\{}{\}}{0pt}{}{m+n}{j+n}%
_{n}q^{m-j}c_{n+j}^{\left(  k\right)  }\left(  a,q,L\right)  , \label{Q1}%
\end{equation}
with
\[
\mathcal{C}_{n,0}^{\left(  k\right)  }\left(  a,q,L\right)  =c_{n}^{\left(
	k\right)  }\left(  a,q,L\right)  .\text{\ }%
\]
\end{theorem}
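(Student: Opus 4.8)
The plan is to derive \eqref{Q1} from a single polynomial identity in one indeterminate and then apply a well-chosen linear functional. Throughout I would write $y^{\langle N,q\rangle}:=y(y-q)(y-2q)\cdots(y-(N-1)q)$, with $y^{\langle 0,q\rangle}:=1$, and use the classical expansion $y^{\langle N,q\rangle}=\sum_{i=0}^{N}s(N,i)\,q^{N-i}y^{i}$; this is just \eqref{F2} rewritten (via $(1+x)^{u}$), or it follows from the displayed recurrence for $s(n,i)$ by an immediate induction on $N$ after rescaling $y\mapsto y/q$.

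The central step will be to prove the polynomial identity
\begin{equation*}
y^{m}\,y^{\langle n,q\rangle}=\sum_{j=0}^{m}\genfrac{\{}{\}}{0pt}{}{m+n}{j+n}_{n}\,q^{m-j}\,y^{\langle n+j,q\rangle}\qquad\text{in }\mathbb{R}[y].
\end{equation*}
To prove it I would divide by $q^{m+n}$ and substitute $w=y/q$; since $y^{\langle N,q\rangle}=q^{N}w^{\underline N}$ with $w^{\underline N}=w(w-1)\cdots(w-N+1)$, the claim becomes $w^{m}w^{\underline n}=\sum_{j}\genfrac{\{}{\}}{0pt}{}{m+n}{j+n}_{n}w^{\underline{n+j}}$. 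I would then use the factorization $w^{\underline{n+j}}=w^{\underline n}(w-n)^{\underline j}$, cancel the common polynomial factor $w^{\underline n}$, and reduce to $w^{m}=\sum_{j}\genfrac{\{}{\}}{0pt}{}{m+n}{j+n}_{n}(w-n)^{\underline j}$, that is, with $v=w-n$, to the $r$-Stirling evaluation $(v+r)^{m}=\sum_{j\ge 0}\genfrac{\{}{\}}{0pt}{}{m+r}{j+r}_{r}v^{\underline j}$ at $r=n$. The latter is recorded in \cite{Broder}; if a self-contained argument is wanted, it falls out of \eqref{GSti} in two lines, by multiplying $e^{vz}=\sum_{j}\tfrac{v^{\underline j}}{j!}(e^{z}-1)^{j}$ by $e^{rz}$, invoking \eqref{GSti} term by term, and comparing the coefficient of $z^{m}/m!$. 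I expect the factorization $w^{\underline{n+j}}=w^{\underline n}(w-n)^{\underline j}$ — the very thing that forces the $r$-Stirling numbers based at $r=n$ to appear — to be the only point that is not pure bookkeeping.

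Finally I would introduce the linear functional $\Lambda\colon\mathbb{R}[y]\to\mathbb{R}$ determined by $\Lambda(y^{i})=l^{a+i}/(a+i)^{k}$ and extended by linearity. Combining the expansion of $y^{\langle N,q\rangle}$ with the explicit formula \eqref{F1} gives $\Lambda\bigl(y^{\langle N,q\rangle}\bigr)=\sum_{i}s(N,i)\,q^{N-i}l^{a+i}/(a+i)^{k}=c_{N}^{(k)}(a,q,L)$ for every $N\ge 0$. Applying $\Lambda$ to the polynomial identity above, the right-hand side becomes $\sum_{j=0}^{m}\genfrac{\{}{\}}{0pt}{}{m+n}{j+n}_{n}q^{m-j}c_{n+j}^{(k)}(a,q,L)$, while the left-hand side, using $y^{m}y^{\langle n,q\rangle}=\sum_{i}s(n,i)q^{n-i}y^{m+i}$, becomes $\sum_{i}s(n,i)\,q^{n-i}l^{a+m+i}/(a+i+m)^{k}=l^{m}\sum_{i}s(n,i)\,q^{n-i}l^{a+i}/(a+i+m)^{k}$, which by the explicit expression for $\mathcal{C}_{n,m}^{(k)}$ recalled just before the theorem equals $\dfrac{l^{m}a^{k}}{(a+m)^{k}}\,\mathcal{C}_{n,m}^{(k)}(a,q,L)$. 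Equating the two evaluations and solving for $\mathcal{C}_{n,m}^{(k)}(a,q,L)$ yields exactly \eqref{Q1}; the stated boundary value $\mathcal{C}_{n,0}^{(k)}(a,q,L)=c_{n}^{(k)}(a,q,L)$ is then the case $m=0$, where only the $j=0$ summand survives, $\genfrac{\{}{\}}{0pt}{}{n}{n}_{n}=n^{0}=1$ by \eqref{F4}, and $q^{0}=l^{0}=1$.
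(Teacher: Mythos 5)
Your proof is correct, and it takes a genuinely different route from the paper. The paper's own argument is very short and citation-based: it applies the inverse Stirling transform to \eqref{F1} to get $\dfrac{l^{a+n}}{q^{n}(a+n)^{k}}=\sum_{i=0}^{n}\genfrac{\{}{\}}{0pt}{}{n}{i}q^{-i}c_{i}^{(k)}(a,q,L)$ and then invokes an external result (Corollary~1 of \cite{Rahmani2014}) to produce \eqref{Q1}. You instead prove everything in-house: the structural step is the polynomial identity $y^{m}y^{\langle n,q\rangle}=\sum_{j=0}^{m}\genfrac{\{}{\}}{0pt}{}{m+n}{j+n}_{n}q^{m-j}y^{\langle n+j,q\rangle}$, obtained from the factorization $w^{\underline{n+j}}=w^{\underline{n}}(w-n)^{\underline{j}}$ together with Broder's expansion $(v+r)^{m}=\sum_{j}\genfrac{\{}{\}}{0pt}{}{m+r}{j+r}_{r}v^{\underline{j}}$, which you correctly rederive from \eqref{GSti}; you then transfer the identity to the numbers via the linear functional $\Lambda(y^{i})=l^{a+i}/(a+i)^{k}$, for which $\Lambda(y^{\langle N,q\rangle})=c_{N}^{(k)}(a,q,L)$ by \eqref{F1} and $\Lambda(y^{m}y^{\langle n,q\rangle})=\dfrac{l^{m}a^{k}}{(a+m)^{k}}\mathcal{C}_{n,m}^{(k)}(a,q,L)$ by the explicit expression preceding the theorem. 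I checked the bookkeeping (the substitution $y=qw$, the range $0\le j\le m$ forced by the vanishing of the $r$-Stirling numbers, and the $m=0$ boundary case via $\genfrac{\{}{\}}{0pt}{}{n}{n}_{n}=1$) and it all goes through. What your approach buys is self-containedness and transparency about where the $r$-Stirling numbers with base $r=n$ come from (the shift in the falling factorial); what the paper's approach buys is brevity, since the generalized Stirling transform machinery of \cite{Rahmani2014} packages exactly this computation. Both ultimately rest on the same $r$-Stirling expansion of powers, so the mathematical content is the same, but your derivation could stand alone without the external corollary.
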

\begin{proof}
	Using the inverse Stirling transform for (\ref{F1}), we have
	\[
	\frac{l^{a+n}}{q^{n}\left(  a+n\right)  ^{k}}=%
	{\displaystyle\sum\limits_{i=0}^{n}}
		\genfrac{\{}{\}}{0pt}{}{n}{i}
	q^{-i}c_{i}^{\left(  k\right)  }\left(  a,q,L\right)  .
	\]
	Now, from result in \cite[p. $681$, Corollary $1$]{Rahmani2014}, we deduce the formula in Theorem \ref{N}.
\end{proof}
\begin{theorem}
	 The $\mathcal{C}_{n,m}^{\left(  k\right)  }\left(  a,q,L\right) $ is given by
	\begin{equation}
	\frac{l^{a}\left(  a+m\right)  ^{k}}{a^{k}}\operatorname*{Lif}\nolimits_{k}%
	\left(  \frac{l\ln\left(  1+qx\right)  }{q};a+m\right)  =%
	{\displaystyle\sum\limits_{n\geq0}}
	\mathcal{C}_{n,m}^{\left(  k\right)  }\left(  a,q,L\right)  \frac{x^{n}}%
	{n!}.\label{GEN1KIND}%
	\end{equation}
\end{theorem}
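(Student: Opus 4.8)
The plan is to derive the generating function \eqref{GEN1KIND} directly from the explicit expression for $\mathcal{C}_{n,m}^{\left(k\right)}\left(a,q,L\right)$ given just before Theorem \ref{N}, namely
\[
\mathcal{C}_{n,m}^{\left(k\right)}\left(a,q,L\right)=\frac{\left(a+m\right)^{k}}{a^{k}}\sum_{i=0}^{n}s\left(n,i\right)\frac{q^{n-i}l^{a+i}}{\left(a+i+m\right)^{k}}.
\]
First I would form the exponential generating function $\sum_{n\geq0}\mathcal{C}_{n,m}^{\left(k\right)}\left(a,q,L\right)x^{n}/n!$ and substitute this expression, pulling the constant factor $\left(a+m\right)^{k}/a^{k}$ out front. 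After interchanging the order of summation (summing over $i\geq0$ first, then over $n\geq i$), the inner sum becomes $\sum_{n\geq i}s\left(n,i\right)\left(qx\right)^{n}/(q^{i}n!)$, which by \eqref{F2} equals $\frac{1}{i!}\left(\ln\left(1+qx\right)\right)^{i}/q^{i}$.

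Next I would collect the remaining factors. What is left is $\frac{l^{a+i}}{\left(a+i+m\right)^{k}}$ times $\frac{1}{i!}\left(\frac{\ln\left(1+qx\right)}{q}\right)^{i}$, summed over $i\geq0$. Writing $l^{a+i}=l^{a}l^{i}$ and regrouping, the sum over $i$ is
\[
l^{a}\sum_{i\geq0}\frac{1}{\left(a+m+i\right)^{k}}\frac{1}{i!}\left(\frac{l\ln\left(1+qx\right)}{q}\right)^{i},
\]
which is exactly $l^{a}\operatorname*{Lif}\nolimits_{k}\!\left(\frac{l\ln\left(1+qx\right)}{q};a+m\right)$ by the definition \eqref{K1} of the extended polylogarithm factorial function with parameter $a+m$ in place of $a$. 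Multiplying back by the prefactor $\left(a+m\right)^{k}/a^{k}$ yields the left-hand side of \eqref{GEN1KIND}, completing the argument.

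The only genuine step requiring care is the interchange of the double summation and the identification of the inner series with $\left(\ln\left(1+qx\right)\right)^{i}$ via \eqref{F2}; this is justified formally at the level of power series since $s\left(n,i\right)=0$ for $n<i$, so the double sum is triangular and reindexing is unproblematic. One could alternatively deduce \eqref{GEN1KIND} immediately from the identity $\mathcal{C}_{n,m}^{\left(k\right)}\left(a,q,L\right)=\frac{\left(a+m\right)^{k}}{a^{k}}c_{n}^{\left(k\right)}\left(a+m,q,L\right)$ together with the known generating function $l^{a}\operatorname*{Lif}\nolimits_{k}\!\left(\frac{l\ln\left(1+qx\right)}{q};a\right)=\sum_{n\geq0}c_{n}^{\left(k\right)}\left(a,q,L\right)x^{n}/n!$ from the Introduction, simply by replacing $a$ with $a+m$ and multiplying by $\left(a+m\right)^{k}/a^{k}$; this shortcut is essentially a one-line proof, and I would present it as the main argument, with the direct computation above as a remark or alternative.
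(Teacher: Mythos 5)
Your second argument (the direct computation) is essentially the paper's own proof: substitute the explicit expression $\mathcal{C}_{n,m}^{\left(k\right)}\left(a,q,L\right)=\frac{\left(a+m\right)^{k}}{a^{k}}\sum_{i=0}^{n}s\left(n,i\right)\frac{q^{n-i}l^{a+i}}{\left(a+i+m\right)^{k}}$ into the exponential generating function, interchange the triangular double sum, apply \eqref{F2}, and recognize \eqref{K1} with parameter $a+m$. That computation is correct and complete, and had you presented it as the main proof you would be in full agreement with the paper.

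The shortcut you propose to feature as the main argument, however, does not yield \eqref{GEN1KIND} as stated. Replacing $a$ by $a+m$ in the generating function from the Introduction also replaces the prefactor $l^{a}$ by $l^{a+m}$, so from $\mathcal{C}_{n,m}^{\left(k\right)}\left(a,q,L\right)=\frac{\left(a+m\right)^{k}}{a^{k}}c_{n}^{\left(k\right)}\left(a+m,q,L\right)$ you obtain
\[
\frac{l^{a+m}\left(a+m\right)^{k}}{a^{k}}\operatorname*{Lif}\nolimits_{k}\left(\frac{l\ln\left(1+qx\right)}{q};a+m\right)={\displaystyle\sum\limits_{n\geq0}}\mathcal{C}_{n,m}^{\left(k\right)}\left(a,q,L\right)\frac{x^{n}}{n!},
\]
which differs from \eqref{GEN1KIND} by the factor $l^{m}$; the two coincide only when $l=1$. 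The root of the mismatch is an inconsistency in the paper itself: substituting $a+m$ into \eqref{F1} produces $l^{a+m+i}$, whereas the displayed explicit sum for $\mathcal{C}_{n,m}^{\left(k\right)}$ (as well as the initial value $\mathcal{C}_{0,m}^{\left(k\right)}=l^{a}/a^{k}$ and the compensating $l^{-m}$ in Theorem \ref{N}) uses $l^{a+i}$, i.e.\ effectively the normalization $\mathcal{C}_{n,m}^{\left(k\right)}\left(a,q,L\right)=\frac{\left(a+m\right)^{k}}{l^{m}a^{k}}c_{n}^{\left(k\right)}\left(a+m,q,L\right)$. The theorem as stated, and its proof, follow this latter normalization. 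So either keep your direct computation as the proof (it is exactly the paper's argument), or, if you want the one-line route, insert the correcting factor $l^{-m}$ into the relation you start from; as written, your designated main argument proves a statement that is off by $l^{m}$.
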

\begin{proof}
	We find, from(\ref{F2}) and (\ref{K1}), that
	\begin{align*}%
	{\displaystyle\sum\limits_{n\geq0}}
	\mathcal{C}_{n,m}^{\left(  k\right)  }\left(  a,q,L\right)  \frac{x^{n}}{n!}
	&  =\frac{l^{a}\left(  a+m\right)  ^{k}}{a^{k}}%
	{\displaystyle\sum\limits_{i\geq0}}
	\left(  \frac{l}{q}\right)  ^{i}\frac{1}{\left(  a+i+m\right)  ^{k}}%
		{\displaystyle\sum\limits_{n\geq i}}
	s\left(  n,i\right)  \frac{\left(  qx\right)  ^{n}}{n!}\\
	&  =\frac{l^{a}\left(  a+m\right)  ^{k}}{a^{k}}%
	{\displaystyle\sum\limits_{i\geq0}}
	\frac{1}{\left(  a+i+m\right)  ^{k}}\left(  \frac{l\ln\left(  1+qx\right)
	}{q}\right)  ^{i}\frac{1}{i!}\\
	&  =\frac{l^{a}\left(  a+m\right)  ^{k}}{a^{k}}\operatorname*{Lif}%
	\nolimits_{k}\left(  \frac{l\ln\left(  1+qx\right)  }{q};a+m\right)  .
	\end{align*}
	
\end{proof}
\begin{theorem}
	\label{AA}The $\mathcal{C}_{n,m}^{\left(  k\right)  }\left(  a,q,L\right)  $
	satisfies the following recurrence relation
	\begin{equation}
	\mathcal{C}_{n+1,m}^{\left(  k\right)  }\left(  a,q,L\right)  =l\left(
	1-\frac{1}{a+m+1}\right)  ^{k}\mathcal{C}_{n,m+1}^{\left(  k\right)  }\left(
	a,q,L\right)  -nq\mathcal{C}_{n,m}^{\left(  k\right)  }\left(  a,q,L\right)
	,\label{Re1}%
	\end{equation}
	with
	\[
	\mathcal{C}_{0,m}^{\left(  k\right)  }\left(  a,q,L\right)  =\frac{l^{a}%
	}{a^{k}}.
	\]
\end{theorem}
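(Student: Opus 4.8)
The plan is to deduce \eqref{Re1} from the generating function \eqref{GEN1KIND} by a single differentiation. Write
\[
F_{m}(x):=\sum_{n\geq0}\mathcal{C}_{n,m}^{\left(  k\right)  }\left(
a,q,L\right)  \frac{x^{n}}{n!}=\frac{l^{a}\left(  a+m\right)  ^{k}}{a^{k}
}\operatorname*{Lif}\nolimits_{k}\!\left(  z(x);a+m\right)  ,\qquad z(x)=\frac
{l\ln\left(  1+qx\right)  }{q}.
\]
The key elementary fact, read straight off \eqref{K1} by termwise differentiation, is that $\frac{d}{dz}\operatorname*{Lif}\nolimits_{k}\!\left(  z;a\right)  =\operatorname*{Lif}\nolimits_{k}\!\left(  z;a+1\right)  $. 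Since $z'(x)=\frac{l}{1+qx}$, the chain rule together with the definition of $F_{m+1}$ then gives
\[
F_{m}'(x)=\frac{l}{1+qx}\,\frac{\left(  a+m\right)  ^{k}}{\left(  a+m+1\right)
^{k}}\,F_{m+1}(x).
\]

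Multiplying through by $1+qx$ converts this into $(1+qx)F_{m}'(x)=l\left(  \frac{a+m}{a+m+1}\right)  ^{k}F_{m+1}(x)=l\left(  1-\frac{1}{a+m+1}\right)  ^{k}F_{m+1}(x)$. I would then compare coefficients of $x^{n}/n!$ on the two sides: on the left, $F_{m}'(x)$ contributes $\mathcal{C}_{n+1,m}^{\left(  k\right)  }\left(  a,q,L\right)  $ and $qxF_{m}'(x)$ contributes $nq\,\mathcal{C}_{n,m}^{\left(  k\right)  }\left(  a,q,L\right)  $ (the factor $n$ coming from $x\cdot\frac{x^{n-1}}{(n-1)!}$), while the right-hand side contributes $l\left(  1-\frac{1}{a+m+1}\right)  ^{k}\mathcal{C}_{n,m+1}^{\left(  k\right)  }\left(  a,q,L\right)  $; rearranging is exactly \eqref{Re1}. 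The initial value $\mathcal{C}_{0,m}^{\left(  k\right)  }\left(  a,q,L\right)  =l^{a}/a^{k}$ was already recorded before Theorem~\ref{N}, so nothing further is needed there.

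The only point requiring a little care is the passage $F_{m}'(x)\mapsto(1+qx)F_{m}'(x)$ and the coefficient extraction from $qxF_{m}'(x)$; there is no genuine obstacle, as everything reduces to the identity $\frac{d}{dz}\operatorname*{Lif}\nolimits_{k}(z;a)=\operatorname*{Lif}\nolimits_{k}(z;a+1)$ and bookkeeping of the constants $(a+m)^{k}$, $(a+m+1)^{k}$, $a^{k}$. As a cross-check, one can bypass generating functions entirely and argue from the explicit Stirling expansion of $\mathcal{C}_{n,m}^{\left(  k\right)  }$: substituting the first-kind recurrence $s\left(  n+1,i\right)  =s\left(  n,i-1\right)  -n\,s\left(  n,i\right)  $ into the formula for $\mathcal{C}_{n+1,m}^{\left(  k\right)  }$ and performing the index shift $j=i-1$ in the $s(n,i-1)$-sum (using $s(n,-1)=0$) reproduces $l\left(  1-\frac{1}{a+m+1}\right)  ^{k}\mathcal{C}_{n,m+1}^{\left(  k\right)  }$, while the $-n\,s(n,i)$-sum (using $s(n,n+1)=0$) reproduces $-nq\,\mathcal{C}_{n,m}^{\left(  k\right)  }$, after the same normalization factors telescope.
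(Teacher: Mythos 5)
Your proof is correct, but it takes a genuinely different route from the paper. You differentiate the generating function \eqref{GEN1KIND}: the identity $\frac{d}{dz}\operatorname*{Lif}\nolimits_{k}(z;a)=\operatorname*{Lif}\nolimits_{k}(z;a+1)$ is indeed immediate from \eqref{K1}, the chain-rule computation $F_{m}'(x)=\frac{l}{1+qx}\left(\frac{a+m}{a+m+1}\right)^{k}F_{m+1}(x)$ is right (the prefactors $l^{a}(a+m)^{k}/a^{k}$ and $l^{a}(a+m+1)^{k}/a^{k}$ cancel exactly as you claim), and extracting coefficients of $x^{n}/n!$ from $(1+qx)F_{m}'(x)=l\left(1-\frac{1}{a+m+1}\right)^{k}F_{m+1}(x)$ gives \eqref{Re1} directly; note that the generating function is established in the paper before Theorem \ref{AA}, so you are entitled to it. The paper instead proves \eqref{Re1} combinatorially from the $r$-Stirling representation \eqref{Q1} of Theorem \ref{N}, applying Broder's recurrence \eqref{F3} together with \eqref{F4} and reindexing the sums over $c_{n+j}^{\left(k\right)}\left(a,q,L\right)$; that argument stays entirely within finite sums and keeps the tie to the numbers $c_{n+j}^{\left(k\right)}$, at the cost of heavier bookkeeping. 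Your approach is shorter and isolates the real mechanism (the shift $a+m\mapsto a+m+1$ under differentiation of $\operatorname*{Lif}\nolimits_{k}$). Your sketched cross-check is also sound and is arguably the most elementary of the three: it needs only the explicit expansion of $\mathcal{C}_{n,m}^{\left(k\right)}$ in Stirling numbers of the first kind and the recurrence $s(n+1,i)=s(n,i-1)-ns(n,i)$, which extends to the boundary indices $i=0$ and $i=n+1$ with the conventions $s(n,-1)=s(n,n+1)=0$ that you invoke.
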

\begin{proof}
	According to (\ref{Q1}) and (\ref{F3}), we can write
	\begin{multline*}
	\mathcal{C}_{n+1,m}^{\left(  k\right)  }\left(  a,q,L\right)  =\frac
	{q^{m}\left(  a+m\right)  ^{k}}{l^{m}a^{k}}%
	{\displaystyle\sum\limits_{j=0}^{m}}
	q^{-j}%
		\genfrac{\{}{\}}{0pt}{}{m+n+1}{j+n+1}%
	_{n}c_{n+j+1}^{\left(  k\right)  }\left(  a,q,L\right)  \\
	-\frac{q^{m}\left(  a+m\right)  ^{k}}{l^{m}a^{k}}%
	{\displaystyle\sum\limits_{j=0}^{m}}
	q^{-j}n%
	\genfrac{\{}{\}}{0pt}{}{m+n}{j+n+1}%
		_{n}c_{n+j+1}^{\left(  k\right)  }\left(  a,q,L\right)  .
	\end{multline*}
	Using (\ref{F4}), we get
	\begin{multline*}
	\mathcal{C}_{n+1,m}^{\left(  k\right)  }\left(  a,q,L\right)  =\frac
	{q^{m+1}\left(  a+m\right)  ^{k}}{l^{m}a^{k}}%
	{\displaystyle\sum\limits_{j=1}^{m+1}}
	q^{-j}%
		\genfrac{\{}{\}}{0pt}{}{m+n+1}{j+n}%
	_{n}c_{n+j}^{\left(  k\right)  }\left(  a,q,L\right)  \\
	-\frac{q^{m+1}\left(  a+m\right)  ^{k}}{l^{m}a^{k}}n%
	{\displaystyle\sum\limits_{j=1}^{m}}
	q^{-j}%
	\genfrac{\{}{\}}{0pt}{}{m+n}{j+n}%
	_{n}c_{n+j}^{\left(  k\right)  }\left(  a,q,L\right)  \\ +\frac{q^{m+1}\left(
		a+m\right)  ^{k}}{l^{m}a^{k}}n^{m+1}c_{n}^{\left(  k\right)  }\left(
	a,q,L\right)
	-n\frac{q^{m+1}\left(  a+m\right)  ^{k}}{l^{m}a^{k}}n^{m}c_{n}^{\left(
		k\right)  }\left(  a,q,L\right),
	\end{multline*}
	which gives the following equality
	\begin{multline*}
	\mathcal{C}_{n+1,m}^{\left(  k\right)  }\left(  a,q,L\right)  =\frac
	{q^{m+1}\left(  a+m\right)  ^{k}}{l^{m}a^{k}}{\displaystyle\sum\limits_{j=0}%
		^{m+1}}q^{-j}%
	\genfrac{\{}{\}}{0pt}{}{m+n+1}{j+n}%
	_{n}c_{n+j}^{\left(  k\right)  }(a,q,L)\\
	-nq\left(  \frac{q^{m}\left(  a+m\right)  ^{k}}{l^{m}a^{k}}{\displaystyle\sum
		\limits_{j=0}^{m}}q^{-j}%
	\genfrac{\{}{\}}{0pt}{}{m+n}{j+n}%
	_{n}c_{n+j}^{\left(  k\right)  }(a,q,L)\right)  .
	\end{multline*}
	This is evidently equivalent to (\ref{Re1}).
\end{proof}
 Consequently, from Theorem \ref{AA}, we can deduce a recurrence formula
 for the generalized $m$-poly-Cauchy numbers of the first kind with negative upper indices $\mathcal{C}_{n,m}^{\left(  -k\right)  }\left(  a,q,L\right)  $.
\begin{corollary}
	The $\mathcal{C}_{n,m}^{\left(  -k\right)  }\left(  a,q,L\right)  $ satisfies
	the recurrence equation
	\begin{equation}
	\mathcal{C}_{n+1,m}^{\left(  -k\right)  }\left(  a,q,L\right)  =l\left(
	1+\frac{1}{a+m}\right)  ^{k}\mathcal{C}_{n,m+1}^{\left(  -k\right)  }\left(
	a,q,L\right)  -nq\mathcal{C}_{n,m}^{\left(  -k\right)  }\left(  a,q,L\right)
	,\label{Re2}%
	\end{equation}
	with
	\[
	\mathcal{C}_{0,m}^{\left(  -k\right)  }\left(  a,q,L\right)  =a^{k}l^{a}.
	\]
\end{corollary}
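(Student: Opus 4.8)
The plan is to obtain the corollary from Theorem \ref{AA} by replacing $k$ by $-k$. The point is that the recurrence (\ref{Re1}) is an algebraic identity in which $k$ occurs only as a real exponent: it was deduced from (\ref{Q1}) together with the Stirling-type identities (\ref{F3}) and (\ref{F4}), none of which uses the fact that $k$ is a positive integer. Hence (\ref{Re1}) remains valid for every nonzero real number $k$, and in particular we may substitute $-k$ for $k$ in it.

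Doing so yields
\[
\mathcal{C}_{n+1,m}^{\left(  -k\right)  }\left(  a,q,L\right)  =l\left(
1-\frac{1}{a+m+1}\right)  ^{-k}\mathcal{C}_{n,m+1}^{\left(  -k\right)  }\left(
a,q,L\right)  -nq\,\mathcal{C}_{n,m}^{\left(  -k\right)  }\left(  a,q,L\right)  .
\]
It then remains only to rewrite the coefficient: since $1-\frac{1}{a+m+1}=\frac{a+m}{a+m+1}$, we have
\[
\left(  1-\frac{1}{a+m+1}\right)  ^{-k}=\left(  \frac{a+m+1}{a+m}\right)
^{k}=\left(  1+\frac{1}{a+m}\right)  ^{k},
\]
which turns the displayed relation into (\ref{Re2}). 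For the initial value, the identity $\mathcal{C}_{0,m}^{\left(  k\right)  }\left(  a,q,L\right)  =l^{a}/a^{k}$ holds for every $k$, so replacing $k$ by $-k$ gives $\mathcal{C}_{0,m}^{\left(  -k\right)  }\left(  a,q,L\right)  =a^{k}l^{a}$.

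I do not anticipate a genuine difficulty here; the only step needing a word of justification is the substitution $k\mapsto-k$, which is why I would emphasize at the outset that the derivation of Theorem \ref{AA} nowhere used integrality of $k$. A reader preferring to avoid that remark could instead simply repeat the argument leading to (\ref{Re1}) verbatim with $-k$ in place of $k$; the telescoping via (\ref{F3}) and (\ref{F4}) goes through unchanged, though this is more work than the one-line substitution.
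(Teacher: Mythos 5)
Your proposal is correct and matches the paper's own route: the paper states the corollary as an immediate consequence of Theorem \ref{AA}, i.e.\ by substituting $-k$ for $k$ in (\ref{Re1}) and simplifying the coefficient, exactly as you do. Your added remark that the derivation of (\ref{Re1}) never uses positivity or integrality of $k$ is a sound (and slightly more careful) justification of that substitution.
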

\begin{remark}
	Note that the double generating function of $\ \mathcal{C}_{n,m}^{\left(		-k\right)  }\left(  a,q,L\right)  $ can be easily obtained by using
	(\ref{GEN1KIND}).
	\begin{align*}%
		{\displaystyle\sum\limits_{n,k\geq0}}
	\mathcal{C}_{n,m}^{\left(  -k\right)  }\left(  a,q,L\right)  \frac{x^{n}}%
	{n!}\frac{y^{k}}{k!}  & =%
	{\displaystyle\sum\limits_{k\geq0}}
		\frac{l^{a}a^{k}}{\left(  a+m\right)  ^{k}}%
		{\displaystyle\sum\limits_{p\geq0}}
	\frac{\left(  a+m+p\right)  ^{k}}{p!}\left(  \frac{l\ln\left(  1+qx\right)
	}{q}\right)  ^{p}\frac{y^{k}}{k!}\\
	& =l^{a}%
	{\displaystyle\sum\limits_{p\geq0}}
	\frac{1}{p!}\left(  \frac{l\ln\left(  1+qx\right)  }{q}\right)  ^{p}%
	{\displaystyle\sum\limits_{k\geq0}}
	\left(  \frac{a\left(  a+m+p\right)  }{a+m}y\right)  ^{k}\frac{1}{k!}\\
	& =l^{a}e^{ay}%
	{\displaystyle\sum\limits_{p\geq0}}
	\frac{1}{p!}\left(  \frac{l\ln\left(  1+qx\right)  }{q}e^{\frac{ay}{a+m}%
	}\right)  ^{p}\\
	& =l^{a}e^{ay}\exp\left(  \ln\left(  1+qx\right)  ^{\frac{l}{q}e^{\frac
			{ay}{a+m}}}\right)  \\
	& =l^{a}e^{ay}\left(  1+qx\right)  ^{\frac{l}{q}e^{\frac{ay}{a+m}}}.
	\end{align*}
\end{remark}
Now, if $a=l=q=k=1,$ then $\mathcal{C}_{n,m}^{\left(  1\right)  }\left(  1,1,\left(
1,\cdots,1\right)  \right)  :=\mathcal{C}_{n,m}$ is $m-$Cauchy numbers of the first kind which are different from $p$-Cauchy numbers of the first kind \cite{Rahmani2016}. The first few values for $\mathcal{C}_{n,m}$ are:%
\begin{align*}
\mathcal{C}_{0,m} &  =1,\\
\text{ \ }\mathcal{C}_{1,m} &  =\frac{1+m}{2+m},\text{ }\\
\text{\ }\mathcal{C}_{2,m} &  =-\frac{1+m}{\left( 2+ m\right)  \left(
3+m\right)  },\text{ }\\
\text{\ }\mathcal{C}_{3,m} &  =\frac{\left(  1+m\right)  \left(  +m\right)
}{\left(  2+m\right)  \left(  3+m\right)  \left( 4+ m\right)  },\\
\text{\ }\mathcal{C}_{4,m} &  =\frac{-2\left(  1+m\right)  \left(38+12m+
m^{2}\right)  }{\left( 2+m\right)  \left( 3+m\right)  \left(
4+m\right)  \left(  5+m\right)  }.
\end{align*}
\begin{corollary}
	The numbers $\mathcal{C}_{n,m}$ satisfy the
	recurrence relation
	\begin{equation}
	\left(2+m\right)\mathcal{C}_{n+1,m}=\left(1+m\right)\mathcal{C}_{n,m+1}-n\left(2+m\right)\mathcal{C}%
	_{n,m},\label{Re3}%
	\end{equation}
	with $\mathcal{C}_{0,m}=1$ is the initial sequence.
\end{corollary}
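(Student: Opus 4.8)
The plan is to obtain this corollary as a direct specialization of Theorem~\ref{AA}. First I would recall that, by definition, $\mathcal{C}_{n,m}$ is the sequence $\mathcal{C}_{n,m}^{\left(1\right)}\left(1,1,\left(1,\ldots,1\right)\right)$, so that in the notation of Theorem~\ref{AA} we have $k=1$, $a=1$, $q=1$, and $l=\prod_{i=1}^{k} l_{i}=1$. The strategy is then simply to substitute these values into the recurrence \eqref{Re1} and simplify.

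Next I would carry out the substitution explicitly. With $l=q=1$, $a=1$, $k=1$, the factor $l\left(1-\frac{1}{a+m+1}\right)^{k}$ becomes $1-\frac{1}{m+2}=\frac{m+1}{m+2}$, and the term $nq\,\mathcal{C}_{n,m}^{\left(k\right)}\left(a,q,L\right)$ becomes $n\,\mathcal{C}_{n,m}$. Hence \eqref{Re1} specializes to
\[
\mathcal{C}_{n+1,m}=\frac{m+1}{m+2}\,\mathcal{C}_{n,m+1}-n\,\mathcal{C}_{n,m}.
\]
Clearing the denominator by multiplying both sides by $m+2$ yields exactly \eqref{Re3}. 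Finally, the initial value follows from the initial condition in Theorem~\ref{AA}: $\mathcal{C}_{0,m}^{\left(k\right)}\left(a,q,L\right)=\frac{l^{a}}{a^{k}}$, which equals $1$ when $a=l=1$, giving $\mathcal{C}_{0,m}=1$ as claimed.

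There is essentially no obstacle here; the only thing to be careful about is bookkeeping of the parameter specialization (in particular noting that $l=1$ follows automatically from each $l_{i}=1$, and that both the exponent $k$ and the factor $(a+m)^k/a^k$ appearing implicitly collapse correctly when $k=1$). Since Theorem~\ref{AA} has already been established, the corollary is immediate.
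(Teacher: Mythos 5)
Your proposal is correct and matches the paper's intent exactly: the corollary is presented there as an immediate specialization of Theorem~\ref{AA} with $a=l=q=k=1$, so that the factor $l\bigl(1-\tfrac{1}{a+m+1}\bigr)^{k}$ becomes $\tfrac{m+1}{m+2}$ and clearing the denominator gives \eqref{Re3}, with $\mathcal{C}_{0,m}=l^{a}/a^{k}=1$. Nothing further is needed.
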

\[
(\mathcal{C}_{n,m})_{n,m \geq 0}=%
\begin{pmatrix}
1 & 1 & 1 & 1 & \cdots  \\
1/2 & 2/3 & 3/4 & 4/5 & \cdots \\
-1/6 & -1/6 & -3/20 & -2/15 & \cdots \\
1/4 & 7/30 & 1/5 & 6/35 & \cdots \\
-19/30 & -17/30 & -33/70 & -83/210 & \cdots \\
\vdots & \vdots & \vdots & \vdots &  & \\
\mathcal{C}_{n,0} & \mathcal{C}_{n,1} & \mathcal{C}_{n,2} & \mathcal{C}_{n,3}
&  &
\end{pmatrix}
\]
The Gregory coefficients $\mathcal{G}_{n}$ are defined by%
\[
\mathcal{G}_{n}=\frac{\mathcal{C}_{n}}{n!},
\]
and play an important role in Gregory's formula \cite{Merlini} which has similarities to the
Euler-Maclaurin summation formula wherein finite differences replace
derivatives. Using (\ref{Re3}), we can easily prove the following recurrence
formula for computing $\mathcal{G}_{n}$.
\begin{corollary}
	
	Let an initial sequence $\mathcal{G}_{0,m}=1,$ be given. Then the matrix $\left(  \mathcal{G}_{n,m}\right)
	_{n,m\geq0}$ associated with this sequence is given recursively by the formula
	\[
	\left(  1+n\right)  \left(  2+m\right)\mathcal{G}_{n+1,m}=\left(1+m\right)\mathcal{G}_{n,m+1}-n\left(2+m\right)\mathcal{G}_{n,m}
	\]
	in which the first column of $\left(  \mathcal{G}_{n,m}\right)
	_{n,m\geq0}$ is 	$\mathcal{G}_{n,0}=\mathcal{G}_{n}.$
\end{corollary}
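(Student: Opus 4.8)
The plan is to derive the asserted recurrence as an immediate consequence of (\ref{Re3}) through the factorial rescaling that already underlies the definition $\mathcal{G}_{n}=\mathcal{C}_{n}/n!$. Concretely, I set
\[
\mathcal{G}_{n,m}:=\frac{\mathcal{C}_{n,m}}{n!}\qquad(n,m\geq 0),
\]
so that $(\mathcal{G}_{n,m})_{n,m\geq0}$ is obtained from $(\mathcal{C}_{n,m})_{n,m\geq0}$ by dividing the $n$-th row by $n!$, and then translate the known recurrence for the $\mathcal{C}_{n,m}$ into one for the $\mathcal{G}_{n,m}$.

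First I would dispose of the boundary data. Since $\mathcal{C}_{0,m}=1$ (recorded right after the definition of the $m$-Cauchy numbers of the first kind), we get $\mathcal{G}_{0,m}=\mathcal{C}_{0,m}/0!=1$, which is the prescribed initial row. For the leftmost column, specializing the identity $\mathcal{C}_{n,0}^{(k)}(a,q,L)=c_{n}^{(k)}(a,q,L)$ from Theorem \ref{N} to $a=l=q=k=1$ gives $\mathcal{C}_{n,0}=c_{n}=\mathcal{C}_{n}$, the ordinary Cauchy numbers of the first kind entering $\mathcal{G}_{n}=\mathcal{C}_{n}/n!$; hence $\mathcal{G}_{n,0}=\mathcal{C}_{n,0}/n!=\mathcal{G}_{n}$, as claimed.

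Next I would perform the substitution in (\ref{Re3}). Writing $\mathcal{C}_{n+1,m}=(n+1)!\,\mathcal{G}_{n+1,m}$, $\mathcal{C}_{n,m+1}=n!\,\mathcal{G}_{n,m+1}$ and $\mathcal{C}_{n,m}=n!\,\mathcal{G}_{n,m}$, the relation
\[
(2+m)\,\mathcal{C}_{n+1,m}=(1+m)\,\mathcal{C}_{n,m+1}-n(2+m)\,\mathcal{C}_{n,m}
\]
becomes
\[
(2+m)(n+1)!\,\mathcal{G}_{n+1,m}=(1+m)\,n!\,\mathcal{G}_{n,m+1}-n(2+m)\,n!\,\mathcal{G}_{n,m},
\]
and cancelling the common factor $n!$ produces exactly
\[
(1+n)(2+m)\,\mathcal{G}_{n+1,m}=(1+m)\,\mathcal{G}_{n,m+1}-n(2+m)\,\mathcal{G}_{n,m}.
\]
Because $\mathcal{G}_{0,m}=1$ is given and each application of the recurrence expresses $\mathcal{G}_{n+1,m}$ in terms of entries of row $n$ only, the whole matrix is reconstructed, which finishes the argument.

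There is essentially no obstacle here: the proof is a one-line rescaling. The only points deserving a second's attention are the bookkeeping — remembering that it is the index $n$, not $m$, that carries the factorial (since $\mathcal{G}$ comes from $\mathcal{C}$ by dividing the $n$-th row by $n!$) — and confirming that the $m=0$ column of $(\mathcal{C}_{n,m})_{n,m\geq0}$ is genuinely the classical Cauchy sequence, so that the identification $\mathcal{G}_{n,0}=\mathcal{G}_{n}$ is exact rather than merely proportional.
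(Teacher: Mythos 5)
Your proof is correct and is exactly the argument the paper intends: the corollary is stated as an easy consequence of (\ref{Re3}) via the rescaling $\mathcal{G}_{n,m}=\mathcal{C}_{n,m}/n!$, which is precisely your substitution, factorial cancellation, and boundary check $\mathcal{G}_{0,m}=1$, $\mathcal{G}_{n,0}=\mathcal{C}_{n,0}/n!=c_{n}/n!=\mathcal{G}_{n}$.
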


\subsection{The generalized $m$-poly-Cauchy numbers of the second kind}
Similarly, we define the generalized $m$-poly-Cauchy  numbers of the second
kind $\widehat{\mathcal{C}}_{n,m}^{\left(  k\right)  }\left(  a,q,L\right)  $
by%
\[
	\widehat{\mathcal{C}}_{n,m}^{\left(  k\right)  }\left(  a,q,L\right)  =\frac{\left(
		a+m\right)  ^{k}}{a^{k}}%
		\widehat{c}_{n}^{\left(  k\right)  }\left(  a+m,q,L\right).
	\]

\begin{theorem}
	\begin{enumerate}
\item[\ ]
		\item The exponential generating function for $\widehat{\mathcal{C}}_{n,m}^{\left(  k\right)  }\left(  a,q,L\right)$ is
		\[
		\frac{l^{a}\left(  a+m\right)  ^{k}}{a^{k}}\operatorname*{Lif}\nolimits_{k}%
		\left(  -\frac{l\ln\left(  1+qx\right)  }{q};a+m\right)  =%
		{\displaystyle\sum\limits_{n\geq0}}
		\widehat{\mathcal{C}}_{n,m}^{\left(  k\right)}\left(  a,q,L\right)  \frac{x^{n}}{n!}.
		\]	
		\item The $\widehat{\mathcal{C}}_{n,m}^{\left(  k\right)  }\left(  a,q,L\right)$ may be expressed as
		\[
		\widehat{\mathcal{C}}_{n,m}^{\left(  k\right)  }\left(  a,q,L\right)
		=\frac{\left(  a+m\right)  ^{k}}{a^{k}}%
			{\displaystyle\sum\limits_{i=0}^{n}}
			q^{n-i}s\left(  n,i\right)  \frac{\left(  -1\right)  ^{i}l^{a+i}}{\left(
			a+i+m\right)  ^{k}}.
		\]	

\item The expression for the generalized $m$-poly-Cauchy numbers of the second kind $\widehat{\mathcal{C}}_{n,m}^{\left( k\right) }\left( a,q,L\right) $ is given by
 \[
\widehat{\mathcal{C}}_{n,m}^{\left(  k\right)  }\left(  a,q,L\right)
=\frac{\left(  -q\right)  ^{m}\left(  a+m\right)  ^{k}}{l^{m}a^{k}}%
{\displaystyle\sum\limits_{j=0}^{m}}
q^{-j}%
\genfrac{\{}{\}}{0pt}{}{m+n}{j+n}%
_{n}\widehat{c}_{n+j}^{\left(  k\right)  }\left(  a,q,L\right).
\]
		\item The $\widehat{\mathcal{C}}_{n,m}^{\left(  k\right)  }\left(
		a,q,L\right)  $ has the  recurrence formula
		\[
		\widehat{\mathcal{C}}_{n+1,m}^{\left(  k\right)  }\left(  a,q,L\right)
		=-l\left(  1-\frac{1}{a+m+1}\right)  ^{k}\widehat{\mathcal{C}}_{n,m+1}%
		^{\left(  k\right)  }\left(  a,q,L\right)  -nq\widehat{\mathcal{C}}%
		_{n,m}^{\left(  k\right)  }\left(  a,q,L\right)  ,
		\]
		with %
		\[
		\widehat{\mathcal{C}}_{0,m}^{\left(  k\right)  }\left(  a,q,L\right)
		=\frac{l^{a}}{a^{k}}.
		\]
		
		\item The $\widehat{\mathcal{C}}_{n,m}^{\left(  -k\right)  }\left(
		a,q,L\right)  $ has the recurrence formula
		\[
		\widehat{\mathcal{C}}_{n+1,m}^{\left(  -k\right)  }\left(  a,q,L\right)
		=-l\left(  1+\frac{1}{a+m}\right)  ^{k}\widehat{\mathcal{C}}_{n,m+1}^{\left(
			-k\right)  }\left(  a,q,L\right)  -nq\widehat{\mathcal{C}}_{n,m}^{\left(
			-k\right)  }\left(  a,q,L\right)  ,
		\]
		with
		\[
		\widehat{\mathcal{C}}_{0,m}^{\left(  -k\right)  }\left(  a,q,L\right)
		=a^{k}l^{a}.
		\]
		\item The double generating function of $\ \widehat{\mathcal{C}}%
		_{n,m}^{\left(  -k\right)  }\left(  a,q,L\right)  $ is given by%
		
		\[
		{\displaystyle\sum\limits_{n,k\geq0}}
		\widehat{\mathcal{C}}_{n,m}^{\left(  -k\right)  }\left(  a,q,L\right)
		\frac{x^{n}}{n!}\frac{y^{k}}{k!}=\frac{l^{a}e^{ay}}{\left(  1+qx\right)
			^{\frac{l}{q}e^{\frac{ay}{a+m}}}}.
		\]
		\end{enumerate}
\end{theorem}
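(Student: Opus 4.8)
The plan is to deduce all six statements from a single observation: replacing $q$ by $-q$ interchanges the two kinds up to a sign. Comparing the explicit formulas recalled in the introduction gives $\widehat{c}_{n}^{(k)}(a,q,L)=(-1)^{n}c_{n}^{(k)}(a,-q,L)$, and substituting this into the defining relations $\widehat{\mathcal{C}}_{n,m}^{(k)}(a,q,L)=\frac{(a+m)^{k}}{a^{k}}\widehat{c}_{n}^{(k)}(a+m,q,L)$ and $\mathcal{C}_{n,m}^{(k)}(a,q,L)=\frac{(a+m)^{k}}{a^{k}}c_{n}^{(k)}(a+m,q,L)$ yields at once
\[
\widehat{\mathcal{C}}_{n,m}^{(k)}(a,q,L)=(-1)^{n}\,\mathcal{C}_{n,m}^{(k)}(a,-q,L),
\]
and likewise with $k$ replaced by $-k$. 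I would record this reflection identity first; everything else is then obtained by putting $q\mapsto -q$ in the first-kind results already proved and simplifying the signs.

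For the closed forms $(2)$ and $(3)$ I would substitute $q\mapsto -q$ in the displayed explicit formula for $\mathcal{C}_{n,m}^{(k)}$ and in \eqref{Q1}, respectively, then rewrite the left-hand side via the reflection identity and the right-hand side via $\widehat{c}_{n+j}^{(k)}(a,q,L)=(-1)^{n+j}c_{n+j}^{(k)}(a,-q,L)$; in $(3)$ the surviving sign combines with $q^{m-j}$ to give exactly $(-q)^{m}q^{-j}$. For $(1)$ I would substitute $q\mapsto -q$ \emph{and} $x\mapsto -x$ in \eqref{GEN1KIND}: the right-hand side becomes $\sum_{n\ge 0}(-1)^{n}\widehat{\mathcal{C}}_{n,m}^{(k)}(a,q,L)(-x)^{n}/n!=\sum_{n\ge 0}\widehat{\mathcal{C}}_{n,m}^{(k)}(a,q,L)x^{n}/n!$, while $\frac{l\ln(1+qx)}{q}$ turns into $-\frac{l\ln(1+qx)}{q}$, which is the asserted generating function. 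The same $q\mapsto -q$ substitution in the first-kind recurrence \eqref{Re1}, after dividing through by $(-1)^{n}$, flips the sign of the leading term and reproduces $(4)$, with the initial value read off from $(2)$ at $n=0$; $(5)$ comes identically from \eqref{Re2}. Finally $(6)$ follows by applying $q\mapsto -q$, $x\mapsto -x$ (leaving $y$ fixed) to the double generating function of the Remark: on the left the two factors of $(-1)^{n}$ cancel, and on the right $(1+qx)^{\frac{l}{q}e^{ay/(a+m)}}$ becomes $(1+qx)^{-\frac{l}{q}e^{ay/(a+m)}}$, i.e.\ the stated reciprocal.

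There is no real obstacle here; the one thing to watch is the sign bookkeeping, where it matters whether one acquires a net $(-1)^{n}$ (as in the closed forms, absorbed by the reflection identity), a net $(-1)^{m}$ (as in $(3)$, absorbed into $(-q)^{m}$), an uncancelled single $(-1)$ (as in the recurrences $(4)$ and $(5)$, which negates the first term on the right), or nothing at all (as in $(1)$ and $(6)$, where the substitution $x\mapsto -x$ is used precisely to make the two $(-1)^{n}$'s cancel). Alternatively, each part could be proved directly by transcribing the corresponding first-kind proof with the sign $(-1)^{i}$ inserted throughout, but routing everything through the reflection identity is shorter and makes the bookkeeping transparent.
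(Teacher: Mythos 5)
Your proposal is correct, but it takes a different route from the paper: the paper states this theorem without proof, the intended argument being to transcribe each first-kind proof with the sign $(-1)^i$ carried along (exactly the ``alternative'' you mention at the end), whereas you derive all six parts at once from the single reflection identity $\widehat{\mathcal{C}}_{n,m}^{(k)}(a,q,L)=(-1)^{n}\mathcal{C}_{n,m}^{(k)}(a,-q,L)$, obtained from $\widehat{c}_{n}^{(k)}(a,q,L)=(-1)^{n}c_{n}^{(k)}(a,-q,L)$ and the fact that both kinds are built from $c_n$, $\widehat{c}_n$ at argument $a+m$ by the same $q$-independent prefactor. I checked the sign bookkeeping in each part and it works out: the explicit sum (2) and the $r$-Stirling expansion (3) follow by $q\mapsto-q$ in the first-kind formulas (in (3) the net $(-1)^m$ indeed combines with $q^{m-j}$ to give $(-q)^m q^{-j}$); the generating functions (1) and (6) follow by the joint substitution $q\mapsto-q$, $x\mapsto-x$, under which $\ln(1+qx)$ is unchanged while the prefactor $1/q$ flips sign; and the recurrences (4), (5) follow by multiplying the substituted first-kind recurrences by $(-1)^{n+1}$, which negates the leading term and leaves $-nq$ intact. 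Your route is shorter, makes the sign pattern transparent, and reuses the already-proved first-kind theorems rather than repeating their Stirling-number manipulations; its only cost is that it proves nothing new structurally, and one must note that the reflection identity is insensitive to the paper's small normalization slip (the stated definition via $\widehat{c}_{n}^{(k)}(a+m,q,L)$ differs from the explicit formula in part (2) by a factor $l^{m}$, just as for the first kind), since whichever prefactor is used, it is the same for both kinds and independent of $q$, so your argument delivers the theorem in the normalization in which the first-kind results hold.
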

Next, we shall prove some relations between the aforementioned two kinds of the generalized
$m$-poly-Cauchy numbers.
\begin{theorem}\label{aze}
	For any integer $n\geq1,$
	\begin{align}
	\left(  -1\right)  ^{n}\frac{\mathcal{C}_{n,m}^{\left(  k\right)  }\left(
		a,q,L\right)  }{n!}  &  =%
		{\displaystyle\sum\limits_{i=1}^{n}}
	q^{n-i}\binom{n-1}{i-1}\frac{\widehat{\mathcal{C}}_{i,m}^{\left(  k\right)
		}\left(  a,q,L\right)  }{i!}.\label{AN1}\\
	\left(  -1\right)  ^{n}\frac{\widehat{\mathcal{C}}_{n,m}^{\left(  k\right)
		}\left(  a,q,L\right)  }{n!}  &  =%
	{\displaystyle\sum\limits_{i=1}^{n}}
	q^{n-i}\binom{n-1}{i-1}\frac{\mathcal{C}_{i,m}^{\left(  k\right)  }\left(
		a,q,L\right)  }{i!}. \label{AN2}%
	\end{align}
\end{theorem}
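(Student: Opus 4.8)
The plan is to read both identities off the exponential generating functions rather than from the Stirling-number expansions. Write
\[
F(x)=\sum_{n\geq0}\mathcal{C}_{n,m}^{(k)}(a,q,L)\frac{x^{n}}{n!},\qquad
\widehat{F}(x)=\sum_{n\geq0}\widehat{\mathcal{C}}_{n,m}^{(k)}(a,q,L)\frac{x^{n}}{n!}.
\]
By \eqref{GEN1KIND} and the generating function for $\widehat{\mathcal{C}}_{n,m}^{(k)}(a,q,L)$ proved above,
\[
F(x)=\frac{l^{a}(a+m)^{k}}{a^{k}}\operatorname{Lif}_{k}\!\left(\frac{l\ln(1+qx)}{q};a+m\right),\qquad
\widehat{F}(x)=\frac{l^{a}(a+m)^{k}}{a^{k}}\operatorname{Lif}_{k}\!\left(-\frac{l\ln(1+qx)}{q};a+m\right).
\]

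The key observation is that $x\mapsto \dfrac{-x}{1+qx}$ is an involution on the formal power series without constant term, and it carries $1+qx$ to $\dfrac{1}{1+qx}$, hence $\ln(1+qx)$ to $-\ln(1+qx)$. Comparing the two displays above therefore gives the functional equations
\[
\widehat{F}(x)=F\!\left(\frac{-x}{1+qx}\right)\qquad\text{and}\qquad F(x)=\widehat{F}\!\left(\frac{-x}{1+qx}\right).
\]
To prove \eqref{AN1} I would start from the second equation, expand $F(x)=\sum_{i\geq0}\frac{\widehat{\mathcal{C}}_{i,m}^{(k)}(a,q,L)}{i!}\,\frac{(-x)^{i}}{(1+qx)^{i}}$, insert the negative-binomial series $\frac{1}{(1+qx)^{i}}=\sum_{j\geq0}\binom{-i}{j}(qx)^{j}$, and equate coefficients of $x^{n}$ on the two sides. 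For $n\geq1$ the index $i=0$ contributes nothing, while for $1\leq i\leq n$ the index $i$ contributes $(-1)^{i}\binom{-i}{n-i}q^{n-i}$ times $\widehat{\mathcal{C}}_{i,m}^{(k)}(a,q,L)/i!$; applying the elementary identity $(-1)^{i}\binom{-i}{n-i}=(-1)^{i}(-1)^{n-i}\binom{n-1}{n-i}=(-1)^{n}\binom{n-1}{i-1}$ and multiplying through by $(-1)^{n}$ produces exactly \eqref{AN1}. Identity \eqref{AN2} follows in the same manner from $\widehat{F}(x)=F\!\left(\frac{-x}{1+qx}\right)$, with the roles of the two sequences interchanged.

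I do not anticipate any genuine obstacle here. The composition with $\dfrac{-x}{1+qx}$ is legitimate purely as an operation on formal power series, since that series vanishes at $x=0$, so every coefficient of $x^{n}$ on the right is a finite sum; the one place that needs a little attention is the sign bookkeeping in the binomial-coefficient identity above. A direct calculation via the Stirling-number formulas would also be available but looks decidedly more cumbersome, and I would use it only as a cross-check.
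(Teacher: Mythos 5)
Your proposal is correct and is essentially the paper's own argument: both proofs compare the exponential generating functions of the two sequences via the substitution that turns $\ln(1+qx)$ into $-\ln(1+qx)$ (equivalently $1+qx\mapsto(1+qx)^{-1}$), the only difference being that you expand $(1+qx)^{-i}$ by the negative-binomial series while the paper sums $\sum_{n\geq i}\binom{n-1}{i-1}(qx)^{n}=\frac{(qx)^{i}}{(1-qx)^{i}}$, i.e.\ the same computation read in the opposite direction. Your sign bookkeeping $(-1)^{i}\binom{-i}{n-i}=(-1)^{n}\binom{n-1}{i-1}$ and the remark that the $i=0$ term drops out for $n\geq1$ are both right, so the proof goes through as written.
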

\begin{proof}
	We use generating function formula, yields
	\[%
		{\displaystyle\sum\limits_{n\geq0}}
	\left(  -1\right)  ^{n}\frac{\mathcal{C}_{n,m}^{\left(  k\right)  }\left(
		a,q,L\right)  }{n!}x^{n}=\frac{l^{a}\left(  a+m\right)  ^{k}}{a^{k}%
	}\operatorname*{Lif}\nolimits_{k}\left(  \frac{l\ln\left(  1-qx\right)  }
	{q};a+m\right).
	\]
	On the other hand,
	\[
		{\displaystyle\sum\limits_{n\geq0}}
		{\displaystyle\sum\limits_{i=1}^{n}}
	\binom{n-1}{i-1}q^{n-i}\frac{\widehat{\mathcal{C}}_{i,m}^{\left(  k\right)
		}\left(  a,q,L\right)  }{i!}x^{n}=%
		{\displaystyle\sum\limits_{i\geq1}}
	\frac{\widehat{\mathcal{C}}_{i,m}^{\left(  k\right)  }\left(  a,q,L\right)
	}{i!q^{i}}%
	{\displaystyle\sum\limits_{n\geq i}}
		\binom{n-1}{i-1}\left(  qx\right)  ^{n}.
	\]
	Since
	\[%
		{\displaystyle\sum\limits_{n\geq i}}
	\binom{n-1}{i-1}z^{n}=\frac{z^{i}}{\left(  1-z\right)  ^{i}},
	\]
	we get%
	\begin{align*}%
	{\displaystyle\sum\limits_{n\geq0}}
	{\displaystyle\sum\limits_{i=1}^{n}}
	\binom{n-1}{i-1}q^{n-i}\frac{\widehat{\mathcal{C}}_{i,m}^{\left(  k\right)
		}\left(  a,q,L\right)  }{i!}x^{n}  &  =%
	{\displaystyle\sum\limits_{i\geq1}}
		\frac{\widehat{\mathcal{C}}_{i,m}^{\left(  k\right)  }\left(  a,q,L\right)
	}{i!}\frac{x^{i}}{\left(  1-qx\right)  ^{i}}\\
	&  =\frac{l^{a}\left(  a+m\right)  ^{k}}{a^{k}}\operatorname*{Lif}
	\nolimits_{k}\left(  \frac{l\ln\left(  1-qx\right)  }{q};a+m\right).
	\end{align*}
	Thus, the proof of (\ref{AN1}) is completed. Using the same method we can prove
	(\ref{AN2}).
\end{proof}
\begin{remark}
	If $m=0$ and $q=1$, Theorem \ref{aze} becomes
	Theorem $13$ in \cite{Komatsu1}.
\end{remark}

\section{The generalized $m$-poly-Bernoulli numbers}
The generalized poly-Bernoulli numbers $B_{n}^{\left(  k\right)  }\left(
a,L\right)  $ are defined in \cite{Komatsu1} as follows:
\[
l^{a-1}\frac{\operatorname*{Li}_{k}\left(  l\left(  1-e^{-z}\right)
	;a-1\right)  }{\left(  1-e^{-z}\right)  }=%
{\displaystyle\sum\limits_{n\geq0}}
B_{n}^{\left(  k\right)  }\left(  a,L\right)  \frac{z^{n}}{n!},
\]
wherein $\operatorname*{Li}_{k}\left(  z;a\right)  $ denotes the generalized
polylogarithm function%
\[
\operatorname*{Li}{}_{k}\left(  z;a\right)  =%
{\displaystyle\sum\limits_{n\geq1}}
\frac{z^{n}}{\left(  n+a\right)  ^{k}}.
\]
The $B_{n}^{\left(  k\right)  }\left(  a,L\right) $ can be computed explicitly by the formula
\[
B_{n}^{\left(  k\right)  }\left(  a,L\right)  =%
{\displaystyle\sum\limits_{i=0}^{n}}
\left(  -1\right)  ^{n-i}%
\genfrac{\{}{\}}{0pt}{}{n}{i}%
\frac{i!l^{i+a}}{\left(  a+i\right)  ^{k}}.
\]
 Here we introduce the generalized $m$-poly-Bernoulli numbers, denoted $	\mathcal{B}_{n,m}^{\left(  k\right)  }\left(  a,q,L\right)$, corresponding to the generalized $m$-poly-Cauchy numbers and give the connections between them.
\begin{definition}
	Let $n\geq0,m\geq0$ and $k\geq1$ be integers, and let $a,q$ and $l_{1}%
	,\ldots,l_{k}$ be non-zero real numbers, with $L=\left(  l_{1},\ldots
	,l_{k}\right)  $ and $l=%
		{\textstyle\prod_{i=1}^{k}}
	l_{i}$. The $\mathcal{B}%
	_{n,m}^{\left(  k\right)  }\left(  a,q,L\right)  $ is defined by:
	\begin{equation}
	\frac{ql^{a-1}\left(  a+m\right)  ^{k}}{a^{k}}\frac{\operatorname*{Li}%
		_{k}\left(  \frac{l}{q}\left(  1-e^{-qz}\right)  ;a+m-1\right)  }{1-e^{-qz}}=%
		{\displaystyle\sum\limits_{n\geq0}}
	\mathcal{B}_{n,m}^{\left(  k\right)  }\left(  a,q,L\right)  \frac{z^{n}}{n!}.
	\label{GenFin}%
	\end{equation}
\end{definition}
\begin{theorem}\label{GenFi}
	An explicit formula for $\mathcal{B}_{n,m}^{\left(  k\right)  }\left(
	a,q,L\right)  $ is
	\[
	\mathcal{B}_{n,m}^{\left(  k\right)  }\left(  a,q,L\right)  =\frac{\left(
		a+m\right)  ^{k}}{a^{k}}%
		{\displaystyle\sum\limits_{i=0}^{n}}
		\frac{i!\left(  -q\right)  ^{n-i}l^{i+a}}{\left(  a+m+i\right)  ^{k}}%
		\genfrac{\{}{\}}{0pt}{}{n}{i}.
	\]
\end{theorem}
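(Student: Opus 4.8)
The plan is to expand the left-hand side of the defining relation \eqref{GenFin} as a formal power series in $z$ and read off the coefficient of $z^{n}/n!$. First I would insert the series definition $\operatorname*{Li}_{k}(w;a+m-1)=\sum_{j\geq1}w^{j}/(j+a+m-1)^{k}$ with $w=\frac{l}{q}(1-e^{-qz})$. The decisive simplification is that the factor $w^{j}=(l/q)^{j}(1-e^{-qz})^{j}$ cancels one power in the denominator $1-e^{-qz}$, leaving $(1-e^{-qz})^{j-1}$; after the index shift $j\mapsto i+1$ and absorbing the prefactor $\frac{ql^{a-1}(a+m)^{k}}{a^{k}}$, the left-hand side becomes
\[
\frac{(a+m)^{k}}{a^{k}}\sum_{i\geq0}\frac{l^{a+i}}{q^{i}(a+m+i)^{k}}\,(1-e^{-qz})^{i}.
\]

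Next I would expand $(1-e^{-qz})^{i}$ in powers of $z$. Writing $1-e^{-qz}=-(e^{-qz}-1)$ and applying the exponential generating function of the Stirling numbers of the second kind (equation \eqref{GSti} with $r=0$) to $(e^{-qz}-1)^{i}$ gives
\[
(1-e^{-qz})^{i}=i!\sum_{n\geq i}\genfrac{\{}{\}}{0pt}{}{n}{i}(-1)^{i+n}q^{n}\frac{z^{n}}{n!}.
\]
Substituting this, interchanging the two summations — legitimate over formal power series since for each fixed $n$ only the terms with $i\leq n$ contribute — and extracting the coefficient of $z^{n}/n!$ yields
\[
\mathcal{B}_{n,m}^{(k)}(a,q,L)=\frac{(a+m)^{k}}{a^{k}}\sum_{i=0}^{n}\frac{i!\,l^{a+i}}{(a+m+i)^{k}}\genfrac{\{}{\}}{0pt}{}{n}{i}(-1)^{i+n}q^{n-i},
\]
where the factor $q^{-i}$ from the previous display has combined with $q^{n}$ to produce $q^{n-i}$.

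The remaining step is purely cosmetic: since $(-1)^{i+n}=(-1)^{n-i}$, we have $(-1)^{i+n}q^{n-i}=(-q)^{n-i}$, and the expression becomes exactly the formula asserted in Theorem \ref{GenFi}. I do not anticipate a genuine obstacle here; the only places needing attention are the index shift $j\mapsto i+1$ in the polylogarithm series (which is what creates the $q^{-i}$) and the sign bookkeeping $(-1)^{i+n}$ versus $(-q)^{n-i}$. As a consistency check, setting $q=1$ collapses the result to $\mathcal{B}_{n,m}^{(k)}(a,1,L)=\frac{(a+m)^{k}}{a^{k}}B_{n}^{(k)}(a+m,L)$, which matches the explicit formula for $B_{n}^{(k)}(a,L)$ recalled above and parallels the relation $\mathcal{C}_{n,m}^{(k)}(a,q,L)=\frac{(a+m)^{k}}{a^{k}}c_{n}^{(k)}(a+m,q,L)$ used on the Cauchy side.
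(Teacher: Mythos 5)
Your proof is correct and follows essentially the same route as the paper's: insert the series for $\operatorname{Li}_{k}$ into \eqref{GenFin}, cancel one factor of $1-e^{-qz}$ via the index shift $j\mapsto i+1$, expand $(1-e^{-qz})^{i}$ with the $r=0$ case of \eqref{GSti} evaluated at $-qz$, and compare coefficients of $z^{n}/n!$, so there is nothing to add. (Only your optional sanity check at $q=1$ is slightly off: the generating functions give $\mathcal{B}_{n,m}^{(k)}(a,1,L)=\frac{(a+m)^{k}}{l^{m}a^{k}}B_{n}^{(k)}(a+m,L)$, i.e.\ with an extra factor $l^{-m}$, but this aside does not affect the argument.)
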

\begin{proof}
	Equations (\ref{GenFin}) and (\ref{GSti}) imply that
	\begin{align*}%
		{\displaystyle\sum\limits_{n\geq0}}
	\mathcal{B}_{n,m}^{\left(  k\right)  }\left(  a,q,L\right)  \frac{z^{n}}{n!}
	&  =\frac{ql^{a-1}\left(  a+m\right)  ^{k}}{a^{k}\left(  1-e^{-qz}\right)  }%
		{\displaystyle\sum\limits_{i\geq1}}
	\frac{\left(  \frac{l}{q}\left(  1-e^{-qz}\right)  \right)  ^{i}}{\left(
		i+a+m-1\right)  ^{k}}\\
	&  =\frac{l^{a}\left(  a+m\right)  ^{k}}{a^{k}}%
		{\displaystyle\sum\limits_{i\geq0}}
	\frac{\left(  \frac{l}{q}\left(  1-e^{-qz}\right)  \right)  ^{i}}{\left(
		i+a+m\right)  ^{k}}\\
	&  =\frac{\left(  a+m\right)  ^{k}}{a^{k}}%
	{\displaystyle\sum\limits_{i\geq0}}
	\frac{\left(  -1\right)  ^{i}i!l^{i+a}}{q^{i}\left(  i+a+m\right)  ^{k}}%
	{\displaystyle\sum\limits_{n\geq i}}
	\genfrac{\{}{\}}{0pt}{}{n}{i}%
	\frac{\left(  -qz\right)  ^{n}}{n!}\\
	&  =%
	{\displaystyle\sum\limits_{n\geq0}}
	\left(  \frac{\left(  a+m\right)  ^{k}}{a^{k}}%
		{\displaystyle\sum\limits_{i=0}^{n}}
	\frac{\left(  -q\right)  ^{n-i}i!l^{i+a}}{\left(  i+a+m\right)  ^{k}}%
	\genfrac{\{}{\}}{0pt}{}{n}{i}%
	\right)  \frac{z^{n}}{n!}.
	\end{align*}
	Comparing the first formula with the last one, we get Theorem \ref{GenFi}.
\end{proof}
Using the inverse Stirling transform \cite{Rahmani2014}, we give the following corollary.
\begin{corollary}
	One has%
	\[%
		{\displaystyle\sum\limits_{i=0}^{n}}
	s\left(  n,i\right)  \left(  -q\right)  ^{n-i}\mathcal{B}_{i,m}^{\left(
		k\right)  }\left(  a,q,L\right)  =\frac{(a+m)^{k}n!l^{n+a}}{\left(
		a  \left(  n+a+m\right)  \right)  ^{k}}.
	\]
\end{corollary}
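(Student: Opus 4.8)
The plan is to obtain the identity as a direct consequence of Stirling inversion applied to the explicit formula for $\mathcal{B}_{n,m}^{\left(k\right)}\left(a,q,L\right)$ furnished by Theorem~\ref{GenFi}. Recall the orthogonality of the Stirling numbers, $\sum_{i}s\left(n,i\right)\genfrac{\{}{\}}{0pt}{}{i}{j}=\delta_{n,j}$; this is exactly the ingredient behind the ``inverse Stirling transform'' of \cite{Rahmani2014}, and it is what lets one undo the Stirling-of-the-second-kind sum appearing in Theorem~\ref{GenFi}.

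First I would rewrite Theorem~\ref{GenFi} by extracting the sign and the power of $q$ from the summand, writing $\left(-q\right)^{n-i}=\left(-q\right)^{n}\left(-q\right)^{-i}$, so that
\[
\frac{a^{k}}{\left(a+m\right)^{k}}\left(-q\right)^{-n}\mathcal{B}_{n,m}^{\left(k\right)}\left(a,q,L\right)=\sum_{i=0}^{n}\genfrac{\{}{\}}{0pt}{}{n}{i}\frac{i!\,l^{i+a}}{\left(-q\right)^{i}\left(a+m+i\right)^{k}}.
\]
Calling the left-hand side $f_{n}$ and setting $g_{i}=\dfrac{i!\,l^{i+a}}{\left(-q\right)^{i}\left(a+m+i\right)^{k}}$, this reads $f_{n}=\sum_{i}\genfrac{\{}{\}}{0pt}{}{n}{i}g_{i}$, whence the inverse Stirling transform gives $g_{n}=\sum_{i=0}^{n}s\left(n,i\right)f_{i}$. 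Substituting back the definitions of $f_{i}$ and $g_{n}$, and then multiplying through by $\left(-q\right)^{n}\left(a+m\right)^{k}/a^{k}$, produces
\[
\sum_{i=0}^{n}s\left(n,i\right)\left(-q\right)^{n-i}\mathcal{B}_{i,m}^{\left(k\right)}\left(a,q,L\right)=\frac{\left(a+m\right)^{k}n!\,l^{n+a}}{a^{k}\left(a+m+n\right)^{k}},
\]
and since $a^{k}\left(a+m+n\right)^{k}=\left(a\left(n+a+m\right)\right)^{k}$ this is precisely the asserted identity. Equivalently, and self-containedly, one could insert the explicit formula for $\mathcal{B}_{i,m}^{\left(k\right)}$ directly into the left-hand sum, interchange the summations over $i$ and the inner index $j$, combine the powers $\left(-q\right)^{n-i}\left(-q\right)^{i-j}=\left(-q\right)^{n-j}$, and collapse $\sum_{i}s\left(n,i\right)\genfrac{\{}{\}}{0pt}{}{i}{j}=\delta_{n,j}$, leaving only the $j=n$ term, which equals $\dfrac{\left(a+m\right)^{k}n!\,l^{n+a}}{a^{k}\left(a+m+n\right)^{k}}$.

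There is no genuine obstacle here; the only point requiring a little care is the bookkeeping of the scalar factors. One must make sure the Stirling-inversion statement being invoked is the plain, unweighted one: the powers of $-q$ and the constant $\left(a+m\right)^{k}/a^{k}$ have to be folded into $f_{n}$ and $g_{n}$ before inverting and restored afterward, exactly as in the rewriting above. With that observed, the computation is only a few lines.
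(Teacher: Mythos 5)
Your proposal is correct and follows essentially the same route as the paper, which obtains the corollary by applying the inverse Stirling transform of \cite{Rahmani2014} to the explicit formula of Theorem \ref{GenFi}; your careful bookkeeping of the factors $(-q)^{n}$ and $(a+m)^{k}/a^{k}$ before inverting is exactly what the paper's one-line argument tacitly assumes.
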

In this paragraph, to find a simple method for computing the generalized $m$-poly-Bernoulli numbers, we consider the sequence $\left(
H_{n,m,p}^{\left(  k\right)  }\left(  a,q,L\right)  \right)  _{n,p\geq0}$ with
two indices defined by%
\begin{equation}
H_{n,p}\left(  m\right)  :=H_{n,m,p}^{\left(  k\right)  }\left(  a,q,L\right)
=\frac{1}{p!l^{p}}\left(  \frac{p+a+m}{a+m}\right)  ^{k}\sum_{i=0}^{p}s\left(
p,i\right)  \left(  -q\right)  ^{p-i}\mathcal{B}_{n+i,m}^{(k)}\left(
a,q,L\right),  \label{Gq1}%
\end{equation}
with%
\[
H_{0,p}\left(  m\right)  =\frac{l^{a}}{a^{k}}%
\]
and
\[
H_{n,0}\left(  m\right)  =\mathcal{B}_{n,m}^{(k)}\left(  a,q,L\right)  .
\]
\begin{theorem}
The $H_{n,p}\left(  m\right)  $ is given recursively by the formula
\begin{equation}
H_{n+1,p}\left(  m\right)  =\frac{l\left(  p+1\right)  \left(  p+a+m\right)
^{k}}{\left(  p+a+m+1\right)  ^{k}}H_{n,p+1}\left(  m\right)  -pqH_{n,p}%
\left(  m\right),  \label{TGQ1}%
\end{equation}
with
\[
H_{0,p}\left(  m\right)  =\frac{l^{a}}{a^{k}}.
\]
\end{theorem}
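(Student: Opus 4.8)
The plan is to pass to exponential generating functions in the index $n$. Put $G_p(z):=\sum_{n\geq 0}H_{n,p}(m)\,\frac{z^n}{n!}$ and let $\beta(z):=\sum_{n\geq 0}\mathcal{B}_{n,m}^{(k)}(a,q,L)\,\frac{z^n}{n!}$ be the series furnished by \eqref{GenFin}. Multiplying the defining identity \eqref{Gq1} by $z^n/n!$, summing over $n$, and using the fact that $\sum_{n\geq 0}\mathcal{B}_{n+i,m}^{(k)}(a,q,L)\,\frac{z^n}{n!}=\beta^{(i)}(z)$ (formal differentiation), one obtains
\begin{equation*}
G_p(z)=\frac{1}{p!\,l^{p}}\left(\frac{p+a+m}{a+m}\right)^{k}\sum_{i=0}^{p}s(p,i)(-q)^{p-i}\beta^{(i)}(z).
\end{equation*}
The first real step is to recognise the differentiation operator $\sum_{i=0}^{p}s(p,i)(-q)^{p-i}D^{i}$, with $D=\tfrac{d}{dz}$, as the product $P_p(D):=D(D+q)(D+2q)\cdots\bigl(D+(p-1)q\bigr)$. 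This rests only on the classical identity $\sum_{i}s(p,i)\,x^{i}=x(x-1)\cdots(x-p+1)$: the substitution $x\mapsto -x/q$ followed by multiplication by $(-q)^{p}$ turns the falling factorial into $x(x+q)\cdots\bigl(x+(p-1)q\bigr)$, which is precisely $\sum_{i}s(p,i)(-q)^{p-i}x^{i}$. Hence $G_p(z)=\frac{1}{p!\,l^{p}}\bigl(\frac{p+a+m}{a+m}\bigr)^{k}P_p(D)\beta(z)$.

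Next I would turn the target recurrence into a relation among the $G_p$. Since $\sum_{n}H_{n+1,p}(m)\,\frac{z^n}{n!}=G_p'(z)$ and the coefficient $pq$ in \eqref{TGQ1} does not depend on $n$, equation \eqref{TGQ1} is equivalent to
\begin{equation*}
G_p'(z)+pq\,G_p(z)=\frac{l(p+1)(p+a+m)^{k}}{(p+a+m+1)^{k}}\,G_{p+1}(z).
\end{equation*}
Use the factorisation $D\,P_p(D)+pq\,P_p(D)=(D+pq)P_p(D)=P_{p+1}(D)$ to rewrite the left side as $\frac{1}{p!\,l^{p}}\bigl(\frac{p+a+m}{a+m}\bigr)^{k}P_{p+1}(D)\beta(z)$. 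On the right side substitute $G_{p+1}(z)=\frac{1}{(p+1)!\,l^{p+1}}\bigl(\frac{p+1+a+m}{a+m}\bigr)^{k}P_{p+1}(D)\beta(z)$ and simplify the scalar
\begin{equation*}
\frac{l(p+1)(p+a+m)^{k}}{(p+a+m+1)^{k}}\cdot\frac{1}{(p+1)!\,l^{p+1}}\left(\frac{p+1+a+m}{a+m}\right)^{k}=\frac{1}{p!\,l^{p}}\left(\frac{p+a+m}{a+m}\right)^{k},
\end{equation*}
where the factors $(p+a+m+1)^{k}=(p+1+a+m)^{k}$ cancel. Both sides then agree, and reading off the coefficient of $z^{n}/n!$ yields \eqref{TGQ1}. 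The stated initial value $H_{0,p}(m)=l^{a}/a^{k}$ is the $n=0$ instance of \eqref{Gq1} combined with the inverse-Stirling evaluation $\sum_{i=0}^{p}s(p,i)(-q)^{p-i}\mathcal{B}_{i,m}^{(k)}(a,q,L)=\frac{(a+m)^{k}p!\,l^{p+a}}{\bigl(a(p+a+m)\bigr)^{k}}$ of the corollary preceding \eqref{Gq1}.

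The only delicate point is the bookkeeping in the operator identity: one must track the sign and the step $q$ carefully so that $\sum_{i}s(p,i)(-q)^{p-i}x^{i}$ is exactly the ascending product $x(x+q)\cdots\bigl(x+(p-1)q\bigr)$ and not an off-by-one variant, as this is what makes $(D+pq)P_p(D)=P_{p+1}(D)$ hold on the nose. Once that is fixed, the remainder is a single scalar simplification; indeed, the weights $p!\,l^{p}$ and $\bigl((p+a+m)/(a+m)\bigr)^{k}$ were inserted into the definition \eqref{Gq1} precisely so that these constants telescope and no denominator of the form $(a+m+i)^{k}$ survives in $G_p$.
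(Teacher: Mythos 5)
Your proof is correct, but it is packaged differently from the paper's. The paper argues directly on the finite sum \eqref{Gq1}: it expands $H_{n,p+1}(m)$ using the recurrence $s(p+1,i)=s(p,i-1)-p\,s(p,i)$, reindexes the two resulting sums, and recognizes them as $H_{n+1,p}(m)$ and $qp\,H_{n,p}(m)$, which after the same scalar cancellation gives \eqref{TGQ1}. You instead pass to the exponential generating function $G_p(z)$, identify the Stirling combination as the operator $P_p(D)=D(D+q)\cdots(D+(p-1)q)$ acting on $\beta(z)$, and reduce \eqref{TGQ1} to the factorization $(D+pq)P_p(D)=P_{p+1}(D)$ together with one scalar identity; your conversion of $\sum_i s(p,i)(-q)^{p-i}x^i$ into the $q$-rising factorial is the correct signed substitution, and the constant check and the derivation of $H_{0,p}(m)=l^a/a^k$ from the corollary preceding \eqref{Gq1} are both right. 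Note that your key operator identity is, coefficient by coefficient, exactly the Stirling recurrence the paper invokes, so the two arguments are dual: the paper's is more elementary (pure finite-sum manipulation, no generating functions needed), while yours makes the structure more transparent --- it shows that the normalizing weights $p!\,l^p$ and $\bigl((p+a+m)/(a+m)\bigr)^k$ in \eqref{Gq1} were chosen precisely so that the constants telescope, and it yields the closed form $G_p(z)=\tfrac{1}{p!l^p}\bigl(\tfrac{p+a+m}{a+m}\bigr)^k P_p(D)\beta(z)$ as a byproduct, together with a justification of the initial value that the paper only states.
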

\begin{proof}
Taking equation ($\ref{Gq1})$ and using the recurrence formula for $s(n,i),$ we find that
\begin{align*}
H_{n,p+1}\left(  m\right)   &  =\frac{(p+a+m+1)^{k}}{\left(  p+1\right)
!l^{p+1}(a+m)^{k}}\sum_{i=0}^{p+1}s\left(  p+1,i\right)  \left(  -q\right)
^{p+1-i}\mathcal{B}_{n+i,m}^{(k)}\left(  a,q,L\right)  \\
&  =\frac{(p+a+m+1)^{k}}{\left(  p+1\right)  !l^{p+1}(a+m)^{k}}\sum
_{i=0}^{p+1}\left(  s\left(  p,i-1\right)  -ps\left(  p,i\right)  \right)
\left(  -q\right)  ^{p+1-i}\mathcal{B}_{n+i,m}^{(k)}\left(  a,q,L\right)  \\
&  =\frac{\left(  p+a+m+1\right)  ^{k}}{\left(  p+a+m\right)  ^{k}\left(
p+1\right)  l}\left(  H_{n+1,p}\left(  m\right)  +qpH_{n,p}\left(  m\right)
\right)  ,
\end{align*}
which is obviously equivalent to $\left(  \ref{TGQ1}\right)  $.
\end{proof}
Now, setting $m=0$ and letting $p:=m$, in (\ref{TGQ1}), we get the following recurrence equation for calculating $\mathcal{H}_{n,0}^{(k)}\left(  a,q,L\right):={B}_{n}^{(k)}\left(  a,q,L\right)$.
\begin{corollary} We have
\begin{equation}
\mathcal{H}_{n+1,m}^{(k)}\left(  a,q,L\right) =\frac{l\left(  m+1\right)  \left(  m+a\right)
^{k}}{\left(  m+a+1\right)  ^{k}}\mathcal{H}_{n,m+1}^{(k)}\left(  a,q,L\right)   -mq\mathcal{H}_{n,m}^{(k)}\left(  a,q,L\right),\label{Alg1}
\end{equation}
with condition
\[
\mathcal{H}_{0,m}^{(k)}\left(  a,q,L\right)  :=\frac{l^{a}}{a^{k}}.
\]
\end{corollary}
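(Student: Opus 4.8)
The plan is to read off \eqref{Alg1} as a mere specialization of the recurrence \eqref{TGQ1} proved in the preceding theorem, with no new computation required. Recall that the double-indexed sequence $H_{n,p}(m)=H_{n,m,p}^{(k)}(a,q,L)$ was defined in \eqref{Gq1} for every triple of nonnegative integers $n,p,m$, and that the theorem establishes
\[
H_{n+1,p}(m)=\frac{l\left(p+1\right)\left(p+a+m\right)^{k}}{\left(p+a+m+1\right)^{k}}H_{n,p+1}(m)-pq\,H_{n,p}(m)
\]
as an identity valid for all such $n,p,m$; in particular it holds for $m=0$.

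First I would put $m=0$ everywhere in \eqref{TGQ1}, obtaining
\[
H_{n+1,p}(0)=\frac{l\left(p+1\right)\left(p+a\right)^{k}}{\left(p+a+1\right)^{k}}H_{n,p+1}(0)-pq\,H_{n,p}(0).
\]
Next I would rename the running index $p$ as $m$ --- a purely cosmetic change, since $p$ and $m$ both range over $\{0,1,2,\dots\}$ --- and adopt the abbreviation $\mathcal{H}_{n,m}^{(k)}(a,q,L):=H_{n,m}(0)$. With this notation the displayed line becomes exactly \eqref{Alg1}. For the initial condition, \eqref{Gq1} gives $H_{0,p}(m)=l^{a}/a^{k}$ for all $p$ and $m$, so in particular $\mathcal{H}_{0,m}^{(k)}(a,q,L)=H_{0,m}(0)=l^{a}/a^{k}$; and since $H_{n,0}(m)=\mathcal{B}_{n,m}^{(k)}(a,q,L)$, the $m=0$ column is $\mathcal{H}_{n,0}^{(k)}(a,q,L)=\mathcal{B}_{n,0}^{(k)}(a,q,L)=:B_{n}^{(k)}(a,q,L)$, the sequence of generalized poly-Bernoulli numbers one wishes to compute.

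Since the whole argument is a substitution together with a relabelling of a dummy variable, there is essentially no mathematical obstacle. The only thing requiring care --- and the one point I would double-check --- is the bookkeeping: the fixed parameter that was called $m$ in \eqref{Gq1}--\eqref{TGQ1} must now be frozen at $0$, while the second running index, previously $p$, must take over the name $m$, so that no symbol is overloaded. Once this relabelling is carried out consistently, both \eqref{Alg1} and its stated initial value follow at once.
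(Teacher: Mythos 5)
Your proposal is correct and is exactly the paper's argument: the corollary is obtained from \eqref{TGQ1} by setting the parameter $m=0$ and relabelling the running index $p$ as $m$, with the initial value $\mathcal{H}_{0,m}^{(k)}(a,q,L)=l^{a}/a^{k}$ inherited from $H_{0,p}(0)$. Your extra remark that the $p=0$ column recovers $\mathcal{B}_{n,0}^{(k)}(a,q,L)$ matches the paper's stated purpose of the recurrence.
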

\begin{remark}
By setting $k=1,a=1,q=1$ and $l=1$ in (\ref{Alg1}), we get Rahmani's algorithm \cite{Rahmani2015} for computing the classical Bernoulli numbers with $\mathcal{B}_{1}=\frac{1}{2}$.
\end{remark}
By replacing $k$ by $-k$ in the previous theorem, we can easily obtain the
values of the $\mathcal{B}_{n,m}^{(k)}\left(  a,q,L\right)  $ with negative
upper indices.
\begin{corollary} We have
\begin{equation}
H_{n+1,m,p}^{\left(  -k\right)  }\left(  a,q,L\right) =\frac{l\left(  p+1\right)\left(  p+a+m+1\right)  ^{k}}{\left(  p+a+m\right)
^{k}}H_{n,m+1,p}^{\left(  -k\right)  }\left(  a,q,L\right)  -pqH_{n,m,p}^{\left( -k\right)  }\left(  a,q,L\right),  \label{TGQ11}%
\end{equation}
with conditions
\[
H_{0,m,p}^{\left(  -k\right)  }\left(  a,q,L\right)  ={l^{a}{a^{k}}}
\]
and
\[
H_{n,m,0}^{\left(  -k\right)  }\left(  a,q,L\right)  =\mathcal{B}_{n,m}^{(-k)}\left(  a,q,L\right).
\]
\end{corollary}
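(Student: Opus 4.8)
The plan is to obtain (\ref{TGQ11}) from the theorem preceding it by the formal substitution $k \mapsto -k$, exactly as the sentence introducing this corollary indicates, checking that each step in the proof of (\ref{TGQ1}) survives the substitution. By definition $H_{n,m,p}^{(-k)}(a,q,L)$ is the quantity produced by (\ref{Gq1}) when $k$ is replaced by $-k$; in particular it is built from the generalized $m$-poly-Bernoulli numbers with negative upper index $\mathcal{B}_{n+i,m}^{(-k)}(a,q,L)$, defined through (\ref{GenFin}) under the same substitution, and its normalizing prefactor becomes $\frac{1}{p!\,l^{p}}\bigl(\frac{p+a+m}{a+m}\bigr)^{-k}$. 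The key observation is that Theorem \ref{GenFi}, the corollary that follows it, and the derivation of (\ref{TGQ1}) all rest only on formal algebra — the Stirling recursion $s(p+1,i) = s(p,i-1) - p\,s(p,i)$ and rearrangements of finite sums — and never use that $k$ is a positive integer, so the whole chain remains valid for an arbitrary real exponent, $-k$ included.

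Concretely I would rerun the proof of (\ref{TGQ1}) verbatim with $-k$ in place of $k$. Starting from $H_{n,m,p+1}^{(-k)}(a,q,L)$ as given by (\ref{Gq1}), one applies the Stirling recursion to the inner sum, recognizes the two resulting pieces — up to the common prefactor — as $H_{n+1,m,p}^{(-k)}(a,q,L)$ and $pq\,H_{n,m,p}^{(-k)}(a,q,L)$, and then solves for $H_{n+1,m,p}^{(-k)}(a,q,L)$. The exponent $k$ enters only through the ratio of the prefactors of (\ref{Gq1}) at consecutive values of the last index: under $k \mapsto -k$ the factor $\frac{(p+a+m)^{k}}{(p+a+m+1)^{k}}$ occurring in (\ref{TGQ1}) becomes $\frac{(p+a+m)^{-k}}{(p+a+m+1)^{-k}} = \frac{(p+a+m+1)^{k}}{(p+a+m)^{k}}$, which is precisely the coefficient appearing in (\ref{TGQ11}). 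The boundary data transform the same way: $H_{0,m,p}^{(k)} = l^{a}/a^{k}$ becomes $H_{0,m,p}^{(-k)} = l^{a}a^{k}$, and $H_{n,m,0}^{(k)} = \mathcal{B}_{n,m}^{(k)}(a,q,L)$ becomes $H_{n,m,0}^{(-k)} = \mathcal{B}_{n,m}^{(-k)}(a,q,L)$.

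There is essentially no obstacle here beyond careful bookkeeping. The one thing to watch is that the exponent $k$ really is confined to that single prefactor ratio and occurs nowhere else in the computation, so that $k \mapsto -k$ produces exactly the reciprocal ratio; and one should track the index shift in the middle term so that the three copies of $H$ on the two sides of the identity match up as they do in (\ref{TGQ1}). A slightly longer alternative would be to argue directly from the generating function (\ref{GenFin}) with $-k$, but the substitution argument is the shortest and is the route the text already points to.
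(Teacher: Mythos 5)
Your proposal is correct and is exactly the paper's route: the corollary is obtained by the formal substitution $k\mapsto-k$ in (\ref{TGQ1}), whose proof uses only the Stirling recursion $s(p+1,i)=s(p,i-1)-ps(p,i)$ and finite-sum rearrangements and is therefore insensitive to the sign of the exponent, the only effect being the inversion of the prefactor ratio $\frac{(p+a+m)^{k}}{(p+a+m+1)^{k}}$ and of the initial value $l^{a}/a^{k}\mapsto l^{a}a^{k}$. One remark: your derivation (correctly) increments the last index, relating $H_{n+1,m,p}^{(-k)}$ to $H_{n,m,p+1}^{(-k)}$ and $H_{n,m,p}^{(-k)}$, so the subscript $H_{n,m+1,p}^{(-k)}$ appearing in the printed statement (\ref{TGQ11}) should be read as a misprint for $H_{n,m,p+1}^{(-k)}$, in agreement with (\ref{TGQ1}) and with the boundary conditions.
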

\begin{theorem}
The double exponential generating function of $\mathcal{B}_{n,m}^{(-k)}\left(
a,q,L\right)  $ is
\[
\sum_{n\geq0}\sum_{k\geq0}\mathcal{B}_{n,m}^{(-k)}\left(  a,q,L\right)
\frac{x^{n}}{n!}\frac{y^{k}}{k!}=\frac{l^{a}e^{ay}}{1-\frac{l}{q}\left(
1-e^{-qz}\right)  e^{\frac{ay}{a+m}}}.
\]
\end{theorem}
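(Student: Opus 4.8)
The plan is to derive the double generating function directly from the defining one–variable identity \eqref{GenFin} with $k$ replaced by $-k$, first collapsing it to a clean single–variable formula and only afterwards summing against $y^{k}/k!$. Set $w=\frac{l}{q}\bigl(1-e^{-qz}\bigr)$, so that $1-e^{-qz}=\frac{q}{l}\,w$ and $\frac{1}{1-e^{-qz}}=\frac{l}{q\,w}$. Replacing $k$ by $-k$ turns the polylogarithm into $\operatorname*{Li}_{-k}(w;a+m-1)=\sum_{i\geq1}(i+a+m-1)^{k}w^{i}$, hence \eqref{GenFin} becomes
\[
\sum_{n\geq0}\mathcal{B}_{n,m}^{(-k)}(a,q,L)\frac{z^{n}}{n!}
=\frac{q\,l^{a-1}a^{k}}{(a+m)^{k}}\,\frac{1}{1-e^{-qz}}\sum_{i\geq1}(i+a+m-1)^{k}w^{i}.
\]
Shifting the summation index $i=p+1$ and cancelling one power of $w$ against $\frac{1}{1-e^{-qz}}=\frac{l}{q\,w}$ yields the single–variable identity
\[
\sum_{n\geq0}\mathcal{B}_{n,m}^{(-k)}(a,q,L)\frac{z^{n}}{n!}
=\frac{l^{a}a^{k}}{(a+m)^{k}}\sum_{p\geq0}(p+a+m)^{k}w^{p}.
\]
The same identity can also be read off from the explicit formula of Theorem \ref{GenFi} with $k\mapsto-k$ together with \eqref{GSti} at $r=0$, which provides an independent check.

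Next I would multiply both sides by $y^{k}/k!$ and sum over $k\geq0$. For each fixed power of $z$ only finitely many terms occur, so interchanging the sums over $k$ and $p$ is a legitimate formal–power–series manipulation; after writing $\frac{a(p+a+m)}{a+m}=a+\frac{ap}{a+m}$, the inner sum over $k$ is
\[
\sum_{k\geq0}\frac{1}{k!}\Bigl(\frac{a(p+a+m)}{a+m}\,y\Bigr)^{k}
=e^{ay}\Bigl(e^{\frac{ay}{a+m}}\Bigr)^{p}.
\]
Substituting this back leaves $l^{a}e^{ay}\sum_{p\geq0}\bigl(w\,e^{ay/(a+m)}\bigr)^{p}$, and summing this geometric series gives
\[
\sum_{n,k\geq0}\mathcal{B}_{n,m}^{(-k)}(a,q,L)\frac{z^{n}}{n!}\frac{y^{k}}{k!}
=\frac{l^{a}e^{ay}}{1-\frac{l}{q}\bigl(1-e^{-qz}\bigr)e^{\frac{ay}{a+m}}},
\]
which is the asserted formula.

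There is essentially no obstacle here: the computation is entirely parallel to the derivations of the double generating functions of $\mathcal{C}_{n,m}^{(-k)}(a,q,L)$ and $\widehat{\mathcal{C}}_{n,m}^{(-k)}(a,q,L)$ obtained earlier in the paper, and the only two points that require a line of care are the index shift $i\mapsto p+1$ combined with the cancellation $\frac{1}{1-e^{-qz}}=\frac{l}{q\,w}$, and the justification that the double sum may be reordered before the geometric series in $p$ is evaluated.
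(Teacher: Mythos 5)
Your proposal is correct and follows essentially the same route as the paper: both expand $\operatorname*{Li}_{-k}$ from the defining generating function \eqref{GenFin}, sum the resulting exponential series in $k$ using $\frac{a(p+a+m)}{a+m}=a+\frac{ap}{a+m}$, and finish with the geometric series in $p$. The only difference is cosmetic — you perform the index shift $i\mapsto p+1$ and cancel the factor $\frac{1}{1-e^{-qz}}$ before summing over $k$, whereas the paper keeps the exponent $p-1$ throughout.
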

\begin{proof}
From (\ref{GenFin}), we have%
\begin{align*}
\sum_{n\geq0}\sum_{k\geq0}\mathcal{B}_{n,m}^{(-k)}\left(  a,q,L\right)
\frac{x^{n}}{n!}\frac{y^{k}}{k!}  & =\sum_{k\geq0}\left(  \frac{ql^{a-1}a^{k}%
}{\left(  a+m\right)  ^{k}}\sum_{p\geq0}\frac{\left(  \frac{l}{q}\left(
1-e^{-qz}\right)  \right)  ^{p}\left(  a+m+p-1\right)  ^{k}}{\left(
1-e^{-qz}\right)  }\right)  \frac{y^{k}}{k!}\\
& =l^{a}\sum_{p\geq1}\left(  \frac{l}{q}\left(  1-e^{-qz}\right)  \right)
^{p-1}\sum_{k\geq0}\left(  \frac{a\left(  a+m+p-1\right)  y}{\left(
a+m\right)  }\right)  ^{k}\frac{1}{k!}\\
& =l^{a}e^{ay}\sum_{p\geq1}\left(  \frac{l}{q}\left(  1-e^{-qz}\right)
e^{\frac{ay}{a+m}}\right)  ^{p-1}\\
& =\frac{l^{a}e^{ay}}{1-\frac{l}{q}\left(  1-e^{-qz}\right)  e^{\frac{ay}%
{a+m}}}.
\end{align*}
\end{proof}
\begin{theorem}
	 The $\mathcal{B}_{n,m}^{\left(  k\right)  }\left(
	a,q,L\right)  $ is related to $\mathcal{C}_{n,m}^{\left(  k\right)  }\left(  a,q,L\right)  $ by the following explicit formula:
	\[
	\mathcal{B}_{n,m}^{\left(  k\right)  }\left(  a,q,L\right)  =%
	{\displaystyle\sum\limits_{j=1}^{n}}
	q^{n-j}%
	{\displaystyle\sum\limits_{i=1}^{n}}
	i!%
	\genfrac{\{}{\}}{0pt}{}{n}{i}%
	\genfrac{\{}{\}}{0pt}{}{i-1}{j-1}%
	\mathcal{C}_{j,m}^{\left(  k\right)  }\left(  a,q,L\right), \ \ \ (n\geq 1).
	\]
\end{theorem}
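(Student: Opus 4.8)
The plan is to reduce the statement to one Stirling-number identity. Set $\lambda_i:=\dfrac{(a+m)^k\,l^{a+i}}{a^k\,(a+m+i)^k}$, so that Theorem~\ref{GenFi} and the explicit formula for $\mathcal{C}_{n,m}^{(k)}$ read
\[
\mathcal{C}_{n,m}^{(k)}(a,q,L)=\sum_{i=0}^{n}s(n,i)\,q^{\,n-i}\lambda_i,\qquad
\mathcal{B}_{n,m}^{(k)}(a,q,L)=\sum_{i=0}^{n}i!\,(-q)^{\,n-i}\genfrac{\{}{\}}{0pt}{}{n}{i}\lambda_i .
\]
First I would rewrite the left-hand relation as $q^{-n}\mathcal{C}_{n,m}^{(k)}=\sum_i s(n,i)\,(q^{-i}\lambda_i)$ and invert it by the inverse Stirling transform of \cite{Rahmani2014} (just as in the proof of Theorem~\ref{N}), obtaining $\lambda_n=\sum_{j=0}^{n}\genfrac{\{}{\}}{0pt}{}{n}{j}q^{\,n-j}\mathcal{C}_{j,m}^{(k)}(a,q,L)$. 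Substituting this into the formula for $\mathcal{B}_{n,m}^{(k)}$, using $(-q)^{\,n-i}q^{\,i-j}=(-1)^{\,n-i}q^{\,n-j}$, and discarding the terms with $i=0$ or $j=0$ (which vanish for $n\ge1$), I obtain
\[
\mathcal{B}_{n,m}^{(k)}(a,q,L)=\sum_{j=1}^{n}q^{\,n-j}\Bigl(\sum_{i=j}^{n}(-1)^{\,n-i}i!\genfrac{\{}{\}}{0pt}{}{n}{i}\genfrac{\{}{\}}{0pt}{}{i}{j}\Bigr)\mathcal{C}_{j,m}^{(k)}(a,q,L).
\]
Hence the theorem follows from the purely combinatorial identity
\[
\sum_{i=j}^{n}(-1)^{\,n-i}i!\genfrac{\{}{\}}{0pt}{}{n}{i}\genfrac{\{}{\}}{0pt}{}{i}{j}
=\sum_{i=j}^{n}i!\genfrac{\{}{\}}{0pt}{}{n}{i}\genfrac{\{}{\}}{0pt}{}{i-1}{j-1}\qquad(n\ge1).
\]

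To prove this I would fix $j$ and compare the exponential generating functions in $n$, using $\sum_{n}\genfrac{\{}{\}}{0pt}{}{n}{i}\tfrac{z^n}{n!}=\tfrac1{i!}(e^z-1)^i$ together with the ordinary generating function $\sum_{i}\genfrac{\{}{\}}{0pt}{}{i}{j}w^i=\dfrac{w^j}{\prod_{t=1}^{j}(1-tw)}$ (equivalently $\sum_{i}\genfrac{\{}{\}}{0pt}{}{i-1}{j-1}w^i=\dfrac{w^j}{\prod_{t=1}^{j-1}(1-tw)}$). The right-hand side has e.g.f.
\[
\sum_{i}\genfrac{\{}{\}}{0pt}{}{i-1}{j-1}(e^z-1)^i=\frac{(e^z-1)^j}{\prod_{t=1}^{j-1}\bigl(1-t(e^z-1)\bigr)} .
\]
On the left-hand side the alternating sign is absorbed by $z\mapsto-z$, giving e.g.f. $\sum_i(-1)^i\genfrac{\{}{\}}{0pt}{}{i}{j}(e^{-z}-1)^i=\sum_i\genfrac{\{}{\}}{0pt}{}{i}{j}(1-e^{-z})^i=\dfrac{(1-e^{-z})^j}{\prod_{t=1}^{j}(1-t(1-e^{-z}))}$. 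Applying the elementary factorization $1-t(1-e^{-z})=e^{-z}\bigl(1-(t-1)(e^z-1)\bigr)$ to every factor (the $t=1$ one becoming trivial), together with $(1-e^{-z})^j=e^{-jz}(e^z-1)^j$, turns this into exactly the same expression $\dfrac{(e^z-1)^j}{\prod_{t=1}^{j-1}(1-t(e^z-1))}$. Equating coefficients of $z^n/n!$ yields the identity, and with it the theorem.

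The substitutions and the power-of-$q$ bookkeeping in the first step are routine, and so is the derivation of the two generating functions. The one delicate point is the last manipulation: one has to recognise that the alternating double-Stirling sum is the $z\mapsto-z$ reflection of the shifted non-alternating one, and that the factorization $1-t(1-e^{-z})=e^{-z}(1-(t-1)(e^z-1))$ simultaneously produces the $e^{-jz}$ that cancels $(1-e^{-z})^j$ and shifts the product down by one factor. That cancellation is where the identity really comes from; everything around it is mechanical.
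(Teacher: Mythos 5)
Your argument is correct, and it takes a genuinely different route from the paper's. The paper works on the right-hand side of the stated identity: it substitutes the explicit expansion of $\mathcal{C}_{j,m}^{(k)}(a,q,L)$ in Stirling numbers of the first kind, interchanges summations, and collapses the triple sum by means of the two quoted identities $\sum_{j=1}^{i}\genfrac{\{}{\}}{0pt}{}{i-1}{j-1}s(j,p)=(-1)^{i-p}\binom{i-1}{p-1}$ and $\sum_{i=1}^{n}(-1)^{i-p}i!\genfrac{\{}{\}}{0pt}{}{n}{i}\binom{i-1}{p-1}=(-1)^{n-p}p!\genfrac{\{}{\}}{0pt}{}{n}{p}$ (stated without proof), thereby recovering the explicit formula for $\mathcal{B}_{n,m}^{(k)}$ of Theorem \ref{GenFi}. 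You instead invert the $\mathcal{C}$-expansion by the Stirling transform (exactly as in the proof of Theorem \ref{N}), substitute into Theorem \ref{GenFi}, and arrive at the a priori different kernel $\sum_{i}(-1)^{n-i}i!\genfrac{\{}{\}}{0pt}{}{n}{i}\genfrac{\{}{\}}{0pt}{}{i}{j}$, which you then identify with the kernel $\sum_{i}i!\genfrac{\{}{\}}{0pt}{}{n}{i}\genfrac{\{}{\}}{0pt}{}{i-1}{j-1}$ of the statement via the mixed e.g.f./o.g.f. computation and the factorization $1-t(1-e^{-z})=e^{-z}\bigl(1-(t-1)(e^{z}-1)\bigr)$; all the steps check out (the Stirling inversion, the vanishing of the $i=0$, $j=0$ terms for $n\ge 1$, and the generating-function identity, which I verified also numerically for small $n,j$). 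What your approach buys is self-containedness — the combinatorial identity you need is proved rather than cited — plus, as a byproduct, an equivalent alternative form of the theorem with $(-1)^{n-i}\genfrac{\{}{\}}{0pt}{}{i}{j}$ in place of $\genfrac{\{}{\}}{0pt}{}{i-1}{j-1}$; what the paper's route buys is brevity, since its two quoted Stirling--binomial identities do the collapsing in one pass.
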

\begin{proof}
	We can write
	\begin{align*}
	RHS  & =%
	{\displaystyle\sum\limits_{j=1}^{n}}
	q^{n-j}%
	{\displaystyle\sum\limits_{i=1}^{n}}
	i!%
	\genfrac{\{}{\}}{0pt}{}{n}{i}%
	\genfrac{\{}{\}}{0pt}{}{i-1}{j-1}%
	\frac{\left(  a+m\right)  ^{k}}{a^{k}}%
	{\displaystyle\sum\limits_{p=0}^{j}}
	q^{j-p}s\left(  j,p\right)  \frac{l^{a+p}}{\left(  a+p+m\right)  ^{k}}\\
	& =\frac{\left(  a+m\right)  ^{k}}{a^{k}}%
	{\displaystyle\sum\limits_{p=0}^{n}}
	q^{n-p}\frac{l^{a+p}}{\left(  a+p+m\right)  ^{k}}\left(
	{\displaystyle\sum\limits_{i=1}^{n}}
	i!%
	\genfrac{\{}{\}}{0pt}{}{n}{i}%
	{\displaystyle\sum\limits_{j=1}^{i}}
		\genfrac{\{}{\}}{0pt}{}{i-1}{j-1}%
	s\left(  j,p\right)  \right).
	\end{align*}
	From
	\[%
	{\displaystyle\sum\limits_{j=1}^{i}}
	\genfrac{\{}{\}}{0pt}{}{i-1}{j-1}%
	s\left(  j,p\right)  =\left(  -1\right)  ^{i-p}\binom{i-1}{p-1}
	\]
	and%
	\[%
	{\displaystyle\sum\limits_{i=1}^{n}}
	\left(  -1\right)  ^{i-p}i!%
	\genfrac{\{}{\}}{0pt}{}{n}{i}%
	\binom{i-1}{p-1}=\left(  -1\right)  ^{n-p}p!%
	\genfrac{\{}{\}}{0pt}{}{n}{p},
	\]
	we get the desired result.
\end{proof}
Proceeding in the same manner as above, one obtains:
\begin{theorem}
	The $\mathcal{B}_{n,m}^{\left(  k\right)  }\left(
	a,q,L\right)  $ is related to the $\widehat{\mathcal{C}}_{n,m}^{\left(  k\right)  }\left(  a,q,L\right)  $
	by the explicit formula%
	\[
	\mathcal{B}_{n,m}^{\left(  k\right)  }\left(  a,q,L\right)  =\left(
	-1\right)  ^{n}%
	{\displaystyle\sum\limits_{j=1}^{n}}
	{\displaystyle\sum\limits_{i=1}^{n}}
	q^{n-j}i!
		\genfrac{\{}{\}}{0pt}{}{n}{i}%
	\genfrac{\{}{\}}{0pt}{}{i}{j}%
	\widehat{\mathcal{C}}_{j,m}^{\left(  k\right)  }\left(  a,q,L\right), \ \ \ (n\geq 1).
	\]
\end{theorem}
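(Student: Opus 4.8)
The plan is to proceed exactly as in the proof of the preceding theorem, replacing the explicit formula for $\mathcal{C}_{j,m}^{\left(k\right)}\left(a,q,L\right)$ by the one for $\widehat{\mathcal{C}}_{j,m}^{\left(k\right)}\left(a,q,L\right)$ recorded in item $(2)$ of the theorem on the generalized $m$-poly-Cauchy numbers of the second kind, namely
\[
\widehat{\mathcal{C}}_{j,m}^{\left(k\right)}\left(a,q,L\right)=\frac{\left(a+m\right)^{k}}{a^{k}}\sum\limits_{p=0}^{j}q^{j-p}s\left(j,p\right)\frac{\left(-1\right)^{p}l^{a+p}}{\left(a+p+m\right)^{k}}.
\]
Substituting this into the right-hand side, combining $q^{n-j}q^{j-p}=q^{n-p}$, and moving the summation over $p$ to the outermost position, one is left with
\[
\left(-1\right)^{n}\frac{\left(a+m\right)^{k}}{a^{k}}\sum\limits_{p=1}^{n}\frac{\left(-1\right)^{p}q^{n-p}l^{a+p}}{\left(a+p+m\right)^{k}}\left(\sum\limits_{i=1}^{n}i!\genfrac{\{}{\}}{0pt}{}{n}{i}\sum\limits_{j=1}^{i}\genfrac{\{}{\}}{0pt}{}{i}{j}s\left(j,p\right)\right),
\]
where the term $p=0$ is seen to contribute nothing when $n\geq1$.

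The key step is the evaluation of the inner double sum, and here the computation is actually shorter than for the first-kind version: since the Stirling factor appearing is $\genfrac{\{}{\}}{0pt}{}{i}{j}$ rather than $\genfrac{\{}{\}}{0pt}{}{i-1}{j-1}$, the inner sum over $j$ collapses at once by the orthogonality relation $\sum_{j}\genfrac{\{}{\}}{0pt}{}{i}{j}s\left(j,p\right)=\delta_{i,p}$, after which $\sum_{i=1}^{n}i!\genfrac{\{}{\}}{0pt}{}{n}{i}\delta_{i,p}=p!\genfrac{\{}{\}}{0pt}{}{n}{p}$. The right-hand side then reduces to
\[
\left(-1\right)^{n}\frac{\left(a+m\right)^{k}}{a^{k}}\sum\limits_{p=1}^{n}\left(-1\right)^{p}q^{n-p}p!\genfrac{\{}{\}}{0pt}{}{n}{p}\frac{l^{a+p}}{\left(a+p+m\right)^{k}}.
\]
Finally, using $\left(-1\right)^{n+p}=\left(-1\right)^{n-p}$ to rewrite $\left(-1\right)^{n}\left(-1\right)^{p}q^{n-p}=\left(-q\right)^{n-p}$, and re-including the vanishing $p=0$ term (legitimate since $\genfrac{\{}{\}}{0pt}{}{n}{0}=0$ for $n\geq1$), the right-hand side becomes exactly the explicit formula for $\mathcal{B}_{n,m}^{\left(k\right)}\left(a,q,L\right)$ supplied by Theorem \ref{GenFi}.

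I do not expect any real obstacle; the argument is essentially bookkeeping. The only points that require a little care are the interchange of the triple summation together with the resulting index constraints $1\leq j\leq i$, $1\leq i\leq n$, $1\leq p\leq n$, the verification that the $p=0$ contribution is null (so that the Stirling orthogonality relation applies with no boundary correction), and the sign reconciliation in the last step. The conceptual point worth stressing is that the extra factor $\left(-1\right)^{p}$ carried by $\widehat{\mathcal{C}}_{j,m}^{\left(k\right)}$ is precisely what turns the two auxiliary identities used in the preceding proof into the plain orthogonality of the two kinds of Stirling numbers.
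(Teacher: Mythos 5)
Your proof is correct and follows essentially the route the paper intends: the paper gives no separate argument for this theorem, stating only that one proceeds as in the first-kind case (substitute the explicit Stirling-number expression for $\widehat{\mathcal{C}}_{j,m}^{\left(k\right)}\left(a,q,L\right)$, interchange the summations, evaluate the inner sums, and compare with Theorem \ref{GenFi}), which is exactly what you do. Your observation that here the inner double sum collapses directly via the orthogonality $\sum_{j}\genfrac{\{}{\}}{0pt}{}{i}{j}s\left(j,p\right)=\delta_{i,p}$, so the two auxiliary binomial identities of the first-kind proof are not needed, is accurate and slightly streamlines the bookkeeping.
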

\begin{theorem}
	The following formulas hold true:
	\begin{align*}
	\mathcal{C}_{n,m}^{\left(  k\right)  }\left(  a,q,L\right)   &  =%
	{\displaystyle\sum\limits_{j=1}^{n}}
	{\displaystyle\sum\limits_{i=1}^{n}}q^{n-j}
	\frac{\left(  -1\right)  ^{i-j}}{i!}s\left(  n,i\right)  s\left(  i,j\right)
	\mathcal{B}_{j,m}^{\left(  k\right)  }\left(  a,q,L\right), \ \ \ (n\geq 1), \\
	\widehat{\mathcal{C}}_{n,m}^{\left(  k\right)  }\left(  a,q,L\right)   &  =%
		{\displaystyle\sum\limits_{j=1}^{n}}
		{\displaystyle\sum\limits_{i=1}^{n}}\left(  -1\right)  ^{j}q^{n-j}
	\frac{1}{i!}s\left(  n,i\right)  s\left(  i,j\right)  \mathcal{B}%
	_{j,m}^{\left(  k\right)  }\left(  a,q,L\right), \ \ \ (n\geq 1).
	\end{align*}
\end{theorem}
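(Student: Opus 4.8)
The plan is to prove both identities at once by inserting the explicit formula for $\mathcal{B}_{j,m}^{\left(k\right)}\left(a,q,L\right)$ from Theorem~\ref{GenFi} into the right-hand sides and then collapsing the resulting triple sums by the orthogonality of the two kinds of Stirling numbers. Recall that
\[
\mathcal{B}_{j,m}^{\left(k\right)}\left(a,q,L\right)=\frac{\left(a+m\right)^{k}}{a^{k}}\sum_{p=0}^{j}\frac{p!\left(-q\right)^{j-p}l^{p+a}}{\left(a+m+p\right)^{k}}\genfrac{\{}{\}}{0pt}{}{j}{p},
\]
and that $\mathcal{C}_{n,m}^{\left(k\right)}$ and $\widehat{\mathcal{C}}_{n,m}^{\left(k\right)}$ have the explicit expressions recorded right after \eqref{F1} and in the preceding theorem, namely $\frac{\left(a+m\right)^{k}}{a^{k}}\sum_{p}s\left(n,p\right)\frac{q^{n-p}l^{a+p}}{\left(a+p+m\right)^{k}}$ and $\frac{\left(a+m\right)^{k}}{a^{k}}\sum_{p}\left(-1\right)^{p}s\left(n,p\right)\frac{q^{n-p}l^{a+p}}{\left(a+p+m\right)^{k}}$. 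The only external fact needed is the classical orthogonality $\sum_{j}s\left(i,j\right)\genfrac{\{}{\}}{0pt}{}{j}{p}=\delta_{i,p}$, valid because the $s\left(n,i\right)$ of \eqref{F2} are the signed Stirling numbers of the first kind.

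First I would handle the identity for $\mathcal{C}_{n,m}^{\left(k\right)}$. Substituting the formula for $\mathcal{B}_{j,m}^{\left(k\right)}$ and interchanging the order of summation, the $q$-powers combine as $q^{n-j}\left(-q\right)^{j-p}=\left(-1\right)^{j-p}q^{n-p}$ and the two sign factors combine as $\left(-1\right)^{i-j}\left(-1\right)^{j-p}=\left(-1\right)^{i-p}$, so the right-hand side equals
\[
\frac{\left(a+m\right)^{k}}{a^{k}}\sum_{p}\frac{p!\,l^{p+a}q^{n-p}}{\left(a+m+p\right)^{k}}\sum_{i}\frac{\left(-1\right)^{i-p}}{i!}s\left(n,i\right)\sum_{j}s\left(i,j\right)\genfrac{\{}{\}}{0pt}{}{j}{p}.
\]
The innermost sum is $\delta_{i,p}$, which forces $i=p$; then $\left(-1\right)^{i-p}=1$, the ratio $p!/i!$ is $1$, and what remains is exactly $\frac{\left(a+m\right)^{k}}{a^{k}}\sum_{p}s\left(n,p\right)\frac{q^{n-p}l^{a+p}}{\left(a+p+m\right)^{k}}=\mathcal{C}_{n,m}^{\left(k\right)}\left(a,q,L\right)$. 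The hypothesis $n\geq1$ is used only to ensure $s\left(n,0\right)=0$, so extending or restricting the outer indices between $0$ and $n$ changes nothing, and the $p=0$ boundary term is harmless.

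For the identity for $\widehat{\mathcal{C}}_{n,m}^{\left(k\right)}$ the computation is word-for-word the same; the only change is the sign bookkeeping, where $\left(-1\right)^{j}q^{n-j}\left(-q\right)^{j-p}=\left(-1\right)^{2j-p}q^{n-p}=\left(-1\right)^{p}q^{n-p}$, so after the same Stirling-orthogonality collapse one is left with $\frac{\left(a+m\right)^{k}}{a^{k}}\sum_{p}\left(-1\right)^{p}s\left(n,p\right)\frac{q^{n-p}l^{a+p}}{\left(a+p+m\right)^{k}}$, which is precisely the explicit formula for $\widehat{\mathcal{C}}_{n,m}^{\left(k\right)}\left(a,q,L\right)$. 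There is no genuine obstacle here beyond organizational care: one must combine the three sign contributions and the two $q$-exponents correctly so that the inner $j$-sum is exactly the orthogonality relation, and check that the $i=0$, $p=0$ boundary terms vanish for $n\geq1$. I would also note, as an alternative, that these two formulas are nothing but the Stirling-transform inverses of the two preceding theorems (which express $\mathcal{B}_{n,m}^{\left(k\right)}$ through $\mathcal{C}_{n,m}^{\left(k\right)}$ and $\widehat{\mathcal{C}}_{n,m}^{\left(k\right)}$), but the direct substitution above is the most economical.
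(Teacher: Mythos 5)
Your argument is correct and is essentially the same as the paper's: substitute the explicit formula for $\mathcal{B}_{j,m}^{\left(k\right)}\left(a,q,L\right)$ from Theorem~\ref{GenFi}, interchange the summations, and collapse via the orthogonality $\sum_{j}s\left(i,j\right)\genfrac{\{}{\}}{0pt}{}{j}{p}=\delta_{i,p}$, matching the result against the explicit Stirling-number expressions for $\mathcal{C}_{n,m}^{\left(k\right)}$ and $\widehat{\mathcal{C}}_{n,m}^{\left(k\right)}$. The only difference is cosmetic: the paper works out the first identity and states the second is similar, while you carry out the sign bookkeeping for both.
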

\begin{proof}
	Since%
	\[%
		{\displaystyle\sum\limits_{j=1}^{i}}
	s\left(  i,j\right)
		\genfrac{\{}{\}}{0pt}{}{j}{p}%
	=\left\{
	\begin{array}
	[c]{c}%
	1,\text{ \ \ \ \ \ \ \ \ \ }(i=p)\\
	0,\text{ \ \ }(otherwise)
	\end{array}
	\right.
	\]
		\begin{align*}
	RHS  & =%
	{\displaystyle\sum\limits_{j=1}^{n}}
		{\displaystyle\sum\limits_{i=1}^{n}}q^{n-j}
	\frac{\left(  -1\right)  ^{i-j}}{i!}s\left(  n,i\right)  s\left(  i,j\right)
	\left(  \frac{\left(  a+m\right)  ^{k}}{a^{k}}%
		{\displaystyle\sum\limits_{p=0}^{j}}
	\frac{p!\left(  -q\right)  ^{j-p}l^{p+a}}{\left(  a+m+p\right)  ^{k}}%
		\genfrac{\{}{\}}{0pt}{}{j}{p}%
	\right)  \\
	& =\frac{\left(  a+m\right)  ^{k}}{a^{k}}%
		{\displaystyle\sum\limits_{p=0}^{n}}
	\frac{p!q^{n-p}l^{p+a}}{\left(  a+m+p\right)  ^{k}}%
	{\displaystyle\sum\limits_{i=1}^{n}}
	\frac{\left(  -1\right)  ^{i-p}}{i!}s\left(  n,i\right)
		{\displaystyle\sum\limits_{j=1}^{i}}
	s\left(  i,j\right)
		\genfrac{\{}{\}}{0pt}{}{j}{p}%
	\\
	& =\frac{\left(  a+m\right)  ^{k}}{a^{k}}%
		{\displaystyle\sum\limits_{p=0}^{n}}
	\frac{q^{n-p}l^{p+a}}{\left(  a+m+p\right)  ^{k}}s\left(  n,p\right)  \\
	& =\mathcal{C}_{n,m}^{\left(  k\right)  }\left(  a,q,L\right).
	\end{align*}
	The second identity can be shown similarly.
\end{proof}
\section{The generalized $m$-poly-Cauchy polynomials}
\subsection{The generalized $m$-poly-Cauchy polynomials of the first kind}
Let $n\geq0,m\geq0$ and $k\geq1$ be integers, and let $a,q$ and
$l_{1},...,l_{k\text{ }}$be non-zero real numbers, with $L=\left(
l_{1},...,l_{k}\right)  $ and $l=l_{1},...,l_{k\text{ }}.$ By $\left(
x\right)  _{q,n}$ we denote the generalized rising factorial given by
  $\left(  x\right)  _{q,n}=x\left(
x+q\right)  \cdots\left(  x+q(n-1)\right)$ with $\left(  x\right)  _{q,0}=1$.

We define $\mathcal{C}_{n,m}^{\left(  k\right)  }\left(  a,q,L;x\right)  $ as follows
\begin{equation}
\frac{l^{a}\left(  a+m\right)  ^{k}}{a^{k}\left(  1+qz\right)  ^{\frac{x}{q}}%
}\operatorname*{Lif}\nolimits_{k}\left(  \frac{l\ln\left(  1+qz\right)  }%
{q};a+m\right)  =%
{\displaystyle\sum\limits_{n\geq0}}
\mathcal{C}_{n,m}^{\left(  k\right)  }\left(  a,q,L;x\right)  \frac{z^{n}}%
{n!},\label{GG1}%
\end{equation}
which we call the generalized $m$-poly-Cauchy polynomials of the first kind
and, we have the following expression for $\mathcal{C}_{n,m}^{\left(  k\right)  }\left(
a,q,L;x\right)  $.
\begin{theorem}\label{thm12}
One has
\begin{equation}
\mathcal{C}_{n,m}^{\left(  k\right)  }\left(  a,q,L;x\right)  =%
{\displaystyle\sum\limits_{j=0}^{n}}
\left(  -1\right)  ^{n-j}\binom{n}{j}\mathcal{C}_{j,m}^{\left(  k\right)
}\left(  a,q,L\right)  \left(  x\right)  _{q,n-j}.\label{GGF}%
\end{equation}
\end{theorem}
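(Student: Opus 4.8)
The plan is to read off the identity directly from the two generating functions, exploiting the fact that the polynomial generating function \eqref{GG1} is precisely the numerical one \eqref{GEN1KIND} multiplied by the single extra factor $(1+qz)^{-x/q}$. So the first step is to expand this factor as a (formal) binomial series
\[
(1+qz)^{-\frac{x}{q}}=\sum_{n\geq0}\binom{-x/q}{n}(qz)^{n},
\]
and then to simplify the coefficients. Writing out $\binom{-x/q}{n}q^{n}=\dfrac{q^{n}}{n!}\prod_{i=0}^{n-1}\!\left(-\tfrac{x}{q}-i\right)=\dfrac{(-1)^{n}}{n!}\prod_{i=0}^{n-1}\!\left(x+qi\right)$, and recalling the definition $\left(x\right)_{q,n}=x(x+q)\cdots(x+q(n-1))$ with $\left(x\right)_{q,0}=1$, this gives the clean expansion
\[
(1+qz)^{-\frac{x}{q}}=\sum_{n\geq0}\frac{(-1)^{n}\left(x\right)_{q,n}}{n!}\,z^{n}.
\]

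The second step is to combine this with \eqref{GEN1KIND} and \eqref{GG1}. From those two identities,
\[
\sum_{n\geq0}\mathcal{C}_{n,m}^{\left(k\right)}\left(a,q,L;x\right)\frac{z^{n}}{n!}
=(1+qz)^{-\frac{x}{q}}\sum_{j\geq0}\mathcal{C}_{j,m}^{\left(k\right)}\left(a,q,L\right)\frac{z^{j}}{j!}
=\left(\sum_{i\geq0}\frac{(-1)^{i}\left(x\right)_{q,i}}{i!}z^{i}\right)\left(\sum_{j\geq0}\mathcal{C}_{j,m}^{\left(k\right)}\left(a,q,L\right)\frac{z^{j}}{j!}\right).
\]
Taking the Cauchy product of the two series on the right and comparing the coefficient of $z^{n}/n!$ on both sides yields
\[
\mathcal{C}_{n,m}^{\left(k\right)}\left(a,q,L;x\right)=\sum_{j=0}^{n}\binom{n}{j}(-1)^{n-j}\left(x\right)_{q,n-j}\,\mathcal{C}_{j,m}^{\left(k\right)}\left(a,q,L\right),
\]
which is exactly \eqref{GGF}.

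There is no real obstacle here: the only point requiring any care is the bookkeeping in the first step, namely checking that $\binom{-x/q}{n}q^{n}=(-1)^{n}\left(x\right)_{q,n}/n!$ so that the generalized rising factorial $\left(x\right)_{q,n}$ appears with the correct sign and normalization. Once that identification is in place, the result follows from the elementary algebra of formal power series, so I would present it essentially as written above, perhaps inserting the one-line verification of the binomial coefficient identity for completeness.
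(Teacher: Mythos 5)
Your proposal is correct and follows essentially the same route as the paper: the paper also writes the polynomial generating function \eqref{GG1} as the product of $(1+qz)^{-x/q}=\sum_{n\geq0}(-1)^{n}(x)_{q,n}z^{n}/n!$ with the numerical generating function \eqref{GEN1KIND} and compares coefficients via the Cauchy product. The only difference is that you verify the expansion of $(1+qz)^{-x/q}$ explicitly through the binomial series, which the paper simply quotes.
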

\begin{proof}
From (\ref{GG1}) and
\[%
{\displaystyle\sum\limits_{n\geq0}}
\left(  -1\right)  ^{n}\left(  x\right)  _{q,n}\frac{z^{n}}{n!}=\frac
{1}{\left(  1+qz\right)  ^{\frac{x}{q}}},
\]
we find
\begin{align*}%
{\displaystyle\sum\limits_{n\geq0}}
\mathcal{C}_{n,m}^{\left(  k\right)  }\left(  a,q,L;x\right)  \frac{z^{n}}{n!}
&  =\left(
{\displaystyle\sum\limits_{n\geq0}}
\left(  -1\right)  ^{n}\left(  x\right)  _{q,n}\frac{z^{n}}{n!}\right)
\left(
{\displaystyle\sum\limits_{n\geq0}}
\mathcal{C}_{n,m}^{\left(  k\right)  }\left(  a,q,L\right)  \frac{z^{n}}
{n!}\right) \\
&  =%
{\displaystyle\sum\limits_{n\geq0}}
\left(
{\displaystyle\sum\limits_{j=0}^{n}}
\left(  -1\right)  ^{n-j}\binom{n}{j}\mathcal{C}_{j,m}^{\left(  k\right)
}\left(  a,q,L\right)  \left(  x\right)  _{q,n-j}\right)  \frac{z^{n}}{n!}.
\end{align*}
The theorem follows from coefficient comparison.
\end{proof}
\begin{theorem} \label{thm13}
One has%
\[
\mathcal{C}_{n,m}^{\left(  k\right)  }\left(  a,q,L;x\right)  =\frac{\left(
a+m\right)  ^{k}}{a^{k}}\sum_{i=0}^{n}\mathcal{T}_{n}^{i}\left(  \frac{x}%
{q}\right)  \frac{q^{n-i}l^{a+i}}{\left(  a+i+m\right)  ^{k}},
\]
wherein $\mathcal{T}_{n}^{i}\left(  x\right)  $ is the weighted Stirling
numbers of the first kind \cite{Car1,Car2}, given by
\[
\frac{1}{i!}\frac{\left(  \ln\left(  1+z\right)  \right)  ^{i}}{\left(
1+z\right)  ^{x}}=%
{\displaystyle\sum\limits_{n=i}^{\infty}}
\mathcal{T}_{n}^{i}\left(  x\right)  \frac{z^{n}}{n!}
\]
or explicitly by%
\begin{equation}
\mathcal{T}_{n}^{i}\left(  x\right)  =%
{\displaystyle\sum\limits_{j=0}^{n}}
\left(  -1\right)  ^{n-j}\binom{n}{j}s\left(  j,i\right)  \left(  x\right)
_{1,n-j}.\label{GG2}%
\end{equation}
\end{theorem}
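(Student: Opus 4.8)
The plan is to read the formula directly off the defining generating function \eqref{GG1}, expanding the extended polylogarithm factorial function by means of its series \eqref{K1} and then recognizing the weighted Stirling numbers of the first kind inside the resulting coefficients. First I would insert \eqref{K1} into the left-hand side of \eqref{GG1} to obtain
\[
\sum_{n\geq0}\mathcal{C}_{n,m}^{\left(k\right)}\left(a,q,L;x\right)\frac{z^{n}}{n!}=\frac{l^{a}\left(a+m\right)^{k}}{a^{k}}\sum_{i\geq0}\frac{1}{\left(i+a+m\right)^{k}}\left(\frac{l}{q}\right)^{i}\frac{1}{i!}\frac{\left(\ln\left(1+qz\right)\right)^{i}}{\left(1+qz\right)^{x/q}}.
\]

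The key step is the substitution $z\mapsto qz$, $x\mapsto x/q$ in the generating function that defines $\mathcal{T}_{n}^{i}\left(x\right)$, which yields
\[
\frac{1}{i!}\frac{\left(\ln\left(1+qz\right)\right)^{i}}{\left(1+qz\right)^{x/q}}=\sum_{n\geq i}\mathcal{T}_{n}^{i}\left(\frac{x}{q}\right)\frac{q^{n}z^{n}}{n!}.
\]
Plugging this back in, interchanging the two summations (legitimate at the level of formal power series in $z$, since for fixed $n$ only the terms with $0\leq i\leq n$ contribute), and comparing the coefficient of $z^{n}/n!$ on both sides, I get
\[
\mathcal{C}_{n,m}^{\left(k\right)}\left(a,q,L;x\right)=\frac{\left(a+m\right)^{k}}{a^{k}}\sum_{i=0}^{n}\frac{l^{a+i}q^{n-i}}{\left(a+i+m\right)^{k}}\mathcal{T}_{n}^{i}\left(\frac{x}{q}\right),
\]
which is exactly the asserted identity.

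There is no serious obstacle here; the only delicate point is the bookkeeping of the powers of $q$. The factor $\left(l/q\right)^{i}$ produced by expanding $\operatorname*{Lif}\nolimits_{k}$ supplies $q^{-i}$, while the dilation $z\mapsto qz$ in the weighted-Stirling generating function supplies $q^{n}$, so the two combine to the exponent $q^{n-i}$ appearing in the statement; one must also check that the denominator $\left(1+qz\right)^{x/q}$ in \eqref{GG1} matches the form $\left(1+w\right)^{y}$ with $w=qz$, which forces $y=x/q$ and hence the argument $x/q$ of $\mathcal{T}_{n}^{i}$. The explicit expansion \eqref{GG2} of $\mathcal{T}_{n}^{i}$ plays no role in this derivation; it is only useful afterwards, for instance to recover Theorem \ref{thm12} as a consistency check by substituting \eqref{GG2} into the formula just obtained and resumming.
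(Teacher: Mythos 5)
Your proof is correct, but it follows a genuinely different route from the paper's. The paper deduces the theorem from Theorem \ref{thm12}: it substitutes the explicit Stirling-number expression for the numbers $\mathcal{C}_{j,m}^{\left(k\right)}\left(a,q,L\right)$ into the binomial convolution \eqref{GGF}, rewrites $\left(x\right)_{q,n-j}=q^{n-j}\left(\frac{x}{q}\right)_{1,n-j}$, interchanges the finite sums, and then recognizes the inner sum as the explicit formula \eqref{GG2} for $\mathcal{T}_{n}^{i}\left(\frac{x}{q}\right)$. You instead argue directly from the defining generating function \eqref{GG1}: expand $\operatorname*{Lif}\nolimits_{k}$ via \eqref{K1}, absorb the factor $\left(1+qz\right)^{-x/q}$ into each term, and identify $\frac{1}{i!}\frac{\left(\ln\left(1+qz\right)\right)^{i}}{\left(1+qz\right)^{x/q}}$ with the weighted-Stirling generating function evaluated at $qz$ and argument $\frac{x}{q}$, so that \eqref{GG2} is never used; your bookkeeping of the powers of $q$ (the $q^{-i}$ from $\left(l/q\right)^{i}$ against the $q^{n}$ from the dilation) is right and gives the stated $q^{n-i}$. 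Your route is the natural polynomial analogue of the paper's own derivation of the generating function \eqref{GEN1KIND} for the numbers and is more self-contained, needing only the series definition of $\mathcal{T}_{n}^{i}$; the paper's route has the advantage of reusing the already-proved Theorem \ref{thm12} and of exhibiting explicitly how the combinatorial formula \eqref{GG2} enters, whereas in your argument \eqref{GG2} remains a side remark (as you note, it can then be used to recover Theorem \ref{thm12} as a consistency check).
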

\begin{proof}
From $\left(  \ref{GGF}\right) $ and Theorem \ref{GBF1}, we can deduce%
\begin{align*}
\mathcal{C}_{n,m}^{\left(  k\right)  }\left(  a,q,L;x\right)   & =%
{\displaystyle\sum\limits_{j=0}^{n}}
\left(  -1\right)  ^{n-j}\binom{n}{j}\mathcal{C}_{j,m}^{\left(  k\right)
}\left(  a,q,L\right)  \left(  x\right)  _{q,n-j}\\
& =%
{\displaystyle\sum\limits_{j=0}^{n}}
\left(  -1\right)  ^{n-j}\binom{n}{j}\frac{\left(  a+m\right)  ^{k}}{a^{k}}%
{\displaystyle\sum\limits_{i=0}^{j}}
s\left(  j,i\right)  \frac{q^{j-i}l^{a+i}}{\left(  a+i+m\right)  ^{k}}\left(
x\right)  _{q,n-j}\\
& =\frac{\left(  a+m\right)  ^{k}}{a^{k}}%
{\displaystyle\sum\limits_{i=0}^{n}}
\frac{q^{n-i}l^{a+i}}{\left(  a+i+m\right)  ^{k}}%
{\displaystyle\sum\limits_{j=0}^{n}}
\left(  -1\right)  ^{n-j}\binom{n}{j}s\left(  j,i\right)  \left(  \frac{x}%
{q}\right)  _{n-j}%
\end{align*}
and the theorem follows from $\left(  \ref{GG2}\right)  $.
\end{proof}
\begin{theorem}\label{thm14}
The following identity is true
\[
\sum_{i=0}^{n}q^{n-i}\mathcal{S}_{n}^{i}\left(  \frac{x}{q}\right)
\mathcal{C}_{i,m}^{\left(  k\right)  }\left(  a,q,L;x\right)  =\frac{\left(
a+m\right)  ^{k}l^{n+a}}{a^{k}\left(  a+n+m\right)  ^{k}},
\]
where $\mathcal{S}_{n}^{i}\left(  x\right)  $ is the weighted Stirling
numbers of the second kind \cite{Car1,Car2} given by
\begin{equation}
{\displaystyle\sum\limits_{n=i}^{\infty}}
\mathcal{S}_{n}^{i}\left(  x\right)  \frac{z^{n}}{n!}=\frac{1}{i!}%
e^{xz}\left(  e^{z}-1\right)  ^{i}\label{GENS},
\end{equation}
or explicitly by%
\[
\mathcal{S}_{n}^{i}\left(  x\right)  =\frac{1}{i!}%
{\displaystyle\sum\limits_{j=0}^{i}}
\binom{i}{j}\left(  -1\right)  ^{i-j}\left(  x+j\right)  ^{n}.
\]
\end{theorem}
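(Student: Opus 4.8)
The plan is to prove the identity by a direct generating-function computation, the point being that the weighted Stirling numbers of the second kind $\mathcal{S}_{n}^{i}$ implement precisely the transform inverse to the one encoded in the generating function \eqref{GG1} of the polynomials $\mathcal{C}_{n,m}^{(k)}\left(a,q,L;x\right)$.

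Write $\mathcal{L}_{n}$ for the left-hand side. First I would form the exponential generating function $\sum_{n\geq0}\mathcal{L}_{n}z^{n}/n!$ and interchange the two summations. The inner sum becomes $\sum_{n\geq i}q^{n}\mathcal{S}_{n}^{i}\left(x/q\right)z^{n}/n!$, and applying \eqref{GENS} with its variable replaced by $qz$ and its parameter by $x/q$ this equals $\frac{1}{i!}e^{xz}\left(e^{qz}-1\right)^{i}$. Hence $\sum_{n\geq0}\mathcal{L}_{n}z^{n}/n!=e^{xz}\sum_{i\geq0}\mathcal{C}_{i,m}^{(k)}\left(a,q,L;x\right)\frac{1}{i!}\bigl(\tfrac{e^{qz}-1}{q}\bigr)^{i}$, and I would recognise the remaining sum over $i$ as the generating function \eqref{GG1} with its variable set equal to $w=\left(e^{qz}-1\right)/q$.

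The crucial simplification is that for this value $1+qw=e^{qz}$, so $\left(1+qw\right)^{-x/q}$ becomes $e^{-xz}$ and cancels the prefactor $e^{xz}$, while $\frac{l\ln\left(1+qw\right)}{q}$ becomes $lz$. Therefore $\sum_{n\geq0}\mathcal{L}_{n}z^{n}/n!=\frac{l^{a}\left(a+m\right)^{k}}{a^{k}}\operatorname{Lif}_{k}\left(lz;a+m\right)$, and expanding the right-hand side by \eqref{K1} gives $\frac{l^{a}\left(a+m\right)^{k}}{a^{k}}\sum_{n\geq0}\frac{l^{n}}{\left(n+a+m\right)^{k}}\frac{z^{n}}{n!}$. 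Comparing coefficients of $z^{n}/n!$ yields $\mathcal{L}_{n}=\frac{\left(a+m\right)^{k}l^{n+a}}{a^{k}\left(a+n+m\right)^{k}}$, which is the claim.

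The computation is essentially routine; the only point requiring care is tracking the $q$-dilation consistently, i.e.\ using \eqref{GENS} with argument $qz$ and parameter $x/q$ and substituting precisely $w=\left(e^{qz}-1\right)/q$ into \eqref{GG1}, so that the two occurrences of the exponent $x/q$ cancel. An alternative route, bypassing \eqref{GG1}, is to insert the explicit formula of Theorem \ref{thm13} for $\mathcal{C}_{i,m}^{(k)}\left(a,q,L;x\right)$ into $\mathcal{L}_{n}$, reverse the order of summation (the powers of $q$ combine to $q^{n-j}$), and invoke the orthogonality relation $\sum_{i}\mathcal{S}_{n}^{i}\left(y\right)\mathcal{T}_{i}^{j}\left(y\right)=\delta_{n,j}$ for the weighted Stirling numbers; the latter is itself proved by substituting $w=e^{z}-1$ into the generating function of the $\mathcal{T}_{i}^{j}$, and the surviving term $i=n=j$ then reproduces the right-hand side at once.
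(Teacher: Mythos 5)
Your proposal is correct. Your primary argument is a direct generating-function computation: summing $\mathcal{L}_n z^n/n!$, using \eqref{GENS} at argument $qz$ and parameter $x/q$, recognising the sum over $i$ as \eqref{GG1} evaluated at $w=(e^{qz}-1)/q$, and observing that $(1+qw)^{x/q}=e^{xz}$ cancels the prefactor so that only $\frac{l^a(a+m)^k}{a^k}\operatorname{Lif}_k(lz;a+m)$ survives; comparing coefficients via \eqref{K1} gives the claim. All substitutions are compositions of formal power series with vanishing constant term, so the manipulation is sound. This differs from the paper's proof, which is a one-line application of the weighted Stirling transform pair $\alpha_n=\sum_i\mathcal{S}_n^i(x)\beta_i\Leftrightarrow\beta_n=\sum_i\mathcal{T}_n^i(x)\alpha_i$ to the expansion in Theorem \ref{thm13}: dividing that expansion by $q^n$ exhibits $q^{-n}\mathcal{C}_{n,m}^{(k)}(a,q,L;x)$ as the $\mathcal{T}_n^i(x/q)$-transform of $\frac{(a+m)^k q^{-i}l^{a+i}}{a^k(a+i+m)^k}$, and inverting yields exactly Theorem \ref{thm14}. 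Your "alternative route" at the end (inserting Theorem \ref{thm13} and invoking the orthogonality $\sum_i\mathcal{S}_n^i(y)\mathcal{T}_i^j(y)=\delta_{n,j}$) is essentially the paper's argument, since that orthogonality is equivalent to the transform pair. What your main route buys is self-containedness: it needs only the generating functions \eqref{GG1}, \eqref{GENS}, \eqref{K1} and bypasses both Theorem \ref{thm13} and the inversion formula; what the paper's route buys is brevity, letting the known inversion do all the work. One cosmetic remark: in the orthogonality argument the correct statement is that the whole inner sum over $i$ collapses to $\delta_{n,j}$, rather than a single "surviving term $i=n=j$", though in the range $j\le i\le n$ these amount to the same thing.
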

\begin{proof}
By using the weighted Stirling transform \cite{Car1}%
\[
\alpha_{n}=%
{\displaystyle\sum\limits_{i=0}^{n}}
\mathcal{S}_{n}^{i}\left(  x\right)  \beta_{i}\Leftrightarrow\beta_{n}=%
{\displaystyle\sum\limits_{i=0}^{n}}
\mathcal{T}_{n}^{i}\left(  x\right)  \alpha_{i},
\]
and Theorem \ref{thm13}, we conclude the proof.
\end{proof}
\subsection{The generalized $m$-poly-Cauchy polynomials of the second kind}
Similarly, we define the generalized $m$-poly-Cauchy polynomials of the second
kind $\widehat{\mathcal{C}}_{n,m}^{\left(  k\right)  }\left(  a,q,L;x\right)
$ by%
\[
\frac{l^{a}\left(  a+m\right)  ^{k}}{a^{k}\left(  1+qz\right)  ^{\frac{x}{q}}%
}\operatorname*{Lif}\nolimits_{k}\left(  -\frac{l\ln\left(  1+qz\right)  }%
{q};a+m\right)  =%
{\displaystyle\sum\limits_{n\geq0}}
\widehat{\mathcal{C}}_{n,m}^{\left(  k\right)  }\left(  a,q,L;x\right)
\frac{z^{n}}{n!}.
\]
\begin{theorem} \label{thm15}
We have
\begin{enumerate}
\item The $\widehat{\mathcal{C}}_{n,m}^{\left(  k\right)  }\left(
a,q,L;x\right)  $ may be
expressed as
\begin{equation}
\widehat{\mathcal{C}}_{n,m}^{\left(  k\right)  }\left(  a,q,L;x\right)
=\frac{\left(  a+m\right)  ^{k}}{a^{k}}{\displaystyle\sum\limits_{i=0}^{n}%
}q^{n-i}\mathcal{T}_{n}^{i}\left(  \frac{x}{q}\right)  \frac{\left(
-1\right)  ^{i}l^{a+i}}{\left(  a+i+m\right)  ^{k}}. \label{ws}
\end{equation}
\item The $\widehat{\mathcal{C}}_{n,m}^{\left(  k\right)  }\left(
a,q,L;x\right)  $ may be also expressed as%
\[
\widehat{\mathcal{C}}_{n,m}^{\left(  k\right)  }\left(  a,q,L;x\right)  =%
{\displaystyle\sum\limits_{j=0}^{n}}
\left(  -1\right)  ^{n-j}\binom{n}{j}\widehat{\mathcal{C}}_{j,m}^{\left(
k\right)  }\left(  a,q,L\right)  \left(  x\right)  _{q,n-j}.
\]

\end{enumerate}
\end{theorem}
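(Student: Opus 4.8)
The plan is to follow, almost verbatim, the reasoning used for the first kind in Theorems \ref{thm12} and \ref{thm13}, the only new feature being the sign $\left(-1\right)^{i}$ produced by the negative argument of $\operatorname*{Lif}\nolimits_{k}$.

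For part (1), I would begin from the defining generating function of $\widehat{\mathcal{C}}_{n,m}^{\left(k\right)}\left(a,q,L;x\right)$ and expand the factorial polylogarithm via \eqref{K1}, which gives
\[
\sum_{n\geq0}\widehat{\mathcal{C}}_{n,m}^{\left(k\right)}\left(a,q,L;x\right)\frac{z^{n}}{n!}=\frac{l^{a}\left(a+m\right)^{k}}{a^{k}}\sum_{i\geq0}\frac{\left(-1\right)^{i}}{\left(a+m+i\right)^{k}}\left(\frac{l}{q}\right)^{i}\frac{\bigl(\ln\left(1+qz\right)\bigr)^{i}}{i!\,\left(1+qz\right)^{x/q}}.
\]
The inner factor is precisely the generating function of the weighted Stirling numbers of the first kind: substituting $w=qz$ in the series defining $\mathcal{T}_{n}^{i}$ in Theorem \ref{thm13} yields $\dfrac{\bigl(\ln\left(1+qz\right)\bigr)^{i}}{i!\,\left(1+qz\right)^{x/q}}=\sum_{n\geq i}\mathcal{T}_{n}^{i}\!\left(\frac{x}{q}\right)q^{n}\frac{z^{n}}{n!}$. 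Interchanging the two (formal) summations and reading off the coefficient of $z^{n}/n!$ produces \eqref{ws}; collecting $l^{a}l^{i}=l^{a+i}$ and $q^{-i}q^{n}=q^{n-i}$ gives exactly the stated shape.

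For part (2), I would exploit the factorisation $\left(1+qz\right)^{-x/q}=\sum_{n\geq0}\left(-1\right)^{n}\left(x\right)_{q,n}z^{n}/n!$ (already used in the proof of Theorem \ref{thm12}) to write the generating function of $\widehat{\mathcal{C}}_{n,m}^{\left(k\right)}\left(a,q,L;x\right)$ as the product of $\sum_{n\geq0}\left(-1\right)^{n}\left(x\right)_{q,n}z^{n}/n!$ with the generating function of the numbers $\widehat{\mathcal{C}}_{n,m}^{\left(k\right)}\left(a,q,L\right)$ established in Section 2.2; forming the Cauchy product and comparing coefficients of $z^{n}/n!$ gives the formula. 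One can alternatively deduce part (2) from part (1) by inserting the explicit expansion \eqref{GG2} of $\mathcal{T}_{n}^{i}\left(x/q\right)$ and re-summing the resulting inner double sum into $\widehat{\mathcal{C}}_{j,m}^{\left(k\right)}\left(a,q,L\right)$ via its explicit formula.

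I do not expect a genuine obstacle: both identities are routine formal power series manipulations mirroring the already-proved first-kind statements, with the substitution $w=qz$ and a Cauchy product as the only bookkeeping steps. The mild point requiring care is tracking the factor $\left(-1\right)^{i}$ coming from the negative argument in $\operatorname*{Lif}\nolimits_{k}$, which is exactly what separates the second kind from the first.
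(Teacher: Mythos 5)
Your proposal is correct and matches the intent of the paper, which states Theorem \ref{thm15} without proof precisely because it is the second-kind analogue of Theorems \ref{thm12} and \ref{thm13}: your part (2) is the same Cauchy-product argument as the proof of Theorem \ref{thm12}, and your part (1) reaches \eqref{ws} by expanding $\operatorname*{Lif}\nolimits_{k}$ against the exponential generating function of $\mathcal{T}_{n}^{i}$ with the substitution $z\mapsto qz$, which differs only trivially from the paper's route for the first kind (there, the weighted-Stirling form is obtained from the binomial convolution together with the explicit formula \eqref{GG2}); the extra factor $(-1)^{i}$ from the negative argument is tracked correctly in both parts.
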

Next, we shall prove some relations between the aforementioned two kinds of the generalized $m$-poly-Cauchy polynomials.
\begin{theorem} \label{thm16}
For $n\geq1,$
\begin{align}
\left(  -1\right)  ^{n}\frac{\mathcal{C}_{n,m}^{\left(  k\right)  }\left(
a,q,L;-x\right)  }{n!} &  =%
{\displaystyle\sum\limits_{i=1}^{n}}
\binom{n-1}{i-1}q^{n-i}\frac{\widehat{\mathcal{C}}_{i,m}^{\left(  k\right)
}\left(  a,q,L;x\right)  }{i!},\label{zR1}\\
\left(  -1\right)  ^{n}\frac{\widehat{\mathcal{C}}_{n,m}^{\left(  k\right)
}\left(  a,q,L;-x\right)  }{n!} &  =%
{\displaystyle\sum\limits_{i=1}^{n}}
\binom{n-1}{i-1}q^{n-i}\frac{\mathcal{C}_{i,m}^{\left(  k\right)  }\left(
a,q,L;x\right)  }{i!}.\label{zR2}%
\end{align}
\end{theorem}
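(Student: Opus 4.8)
The plan is to mimic the proof of Theorem \ref{aze}, working at the level of generating functions. Recall that the generalized $m$-poly-Cauchy polynomials of the first kind have the exponential generating function
\[
\sum_{n\geq0}\mathcal{C}_{n,m}^{\left(  k\right)  }\left(  a,q,L;x\right)  \frac{z^{n}}{n!}=\frac{l^{a}\left(  a+m\right)  ^{k}}{a^{k}\left(  1+qz\right)  ^{x/q}}\operatorname*{Lif}\nolimits_{k}\left(  \frac{l\ln\left(  1+qz\right)  }{q};a+m\right),
\]
and similarly for the second kind with $-\tfrac{l\ln(1+qz)}{q}$ in place of $\tfrac{l\ln(1+qz)}{q}$. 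First I would replace $z$ by $-z$ in the first-kind generating function, so that the left-hand side becomes $\sum_{n\geq0}(-1)^{n}\mathcal{C}_{n,m}^{\left(k\right)}(a,q,L;x)\tfrac{z^{n}}{n!}$, while the right-hand side picks up the factor $(1-qz)^{-x/q}$ and the argument $\tfrac{l\ln(1-qz)}{q}$ inside $\operatorname*{Lif}_{k}$.

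Next I would compute the right-hand side of \eqref{zR1}. Writing out the double sum and exchanging the order of summation gives
\[
\sum_{n\geq0}\sum_{i=1}^{n}\binom{n-1}{i-1}q^{n-i}\frac{\widehat{\mathcal{C}}_{i,m}^{\left(  k\right)  }\left(  a,q,L;x\right)  }{i!}z^{n}=\sum_{i\geq1}\frac{\widehat{\mathcal{C}}_{i,m}^{\left(  k\right)  }\left(  a,q,L;x\right)  }{i!\,q^{i}}\sum_{n\geq i}\binom{n-1}{i-1}(qz)^{n}.
\]
Using the already-recorded identity $\sum_{n\geq i}\binom{n-1}{i-1}w^{n}=w^{i}/(1-w)^{i}$ with $w=qz$, this collapses to $\sum_{i\geq1}\tfrac{\widehat{\mathcal{C}}_{i,m}^{\left(k\right)}(a,q,L;x)}{i!}\bigl(\tfrac{z}{1-qz}\bigr)^{i}$, which is precisely the second-kind generating function evaluated at the argument $z/(1-qz)$ — provided one checks that substituting $z\mapsto z/(1-qz)$ into it produces exactly $(1-qz)^{x/q}\operatorname*{Lif}_{k}\!\bigl(\tfrac{l\ln(1-qz)}{q};a+m\bigr)$ up to the constant $\tfrac{l^{a}(a+m)^{k}}{a^{k}}$. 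Indeed $1+q\cdot\tfrac{z}{1-qz}=\tfrac{1}{1-qz}$, so $\ln\bigl(1+q\cdot\tfrac{z}{1-qz}\bigr)=-\ln(1-qz)$ and $\bigl(1+q\cdot\tfrac{z}{1-qz}\bigr)^{x/q}=(1-qz)^{-x/q}$; hence the second-kind argument $-\tfrac{l\ln(1+q\,\cdot)}{q}$ becomes $+\tfrac{l\ln(1-qz)}{q}$ and the prefactor $(1+q\,\cdot)^{-x/q}$ becomes $(1-qz)^{x/q}$. Comparing with the right-hand side of the first-kind generating function at $-z$ with $x$ replaced by $-x$ shows the two power series agree; equating coefficients of $z^{n}$ yields \eqref{zR1}.

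The proof of \eqref{zR2} is entirely symmetric: one starts instead from the second-kind generating function at $-z$, expands the right-hand side of \eqref{zR2} by the same binomial manipulation, and recognizes the result as the first-kind generating function at $z/(1-qz)$ with $x\mapsto x$, using the same elementary substitution identities for $\ln(1-qz)$ and $(1-qz)^{x/q}$. I do not expect any real obstacle here; the only point requiring a little care is bookkeeping the sign flips and the interchange between the polynomial variable $x$ (which becomes $-x$ on one side) and the series variable $z$ (which becomes $z/(1-qz)$), together with the observation that the substitution $z\mapsto z/(1-qz)$ is exactly the compositional inverse-free identity that converts the $\ln(1+qz)$ appearing in one generating function into the $\ln(1-qz)$ appearing in the other. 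Setting $m=0$ and $q=1$ recovers the polynomial analogue of Theorem $13$ in \cite{Komatsu1}, and setting $x=0$ recovers Theorem \ref{aze}.
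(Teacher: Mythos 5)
Your argument is correct and is essentially the paper's own proof: expand the left side via the generating function \eqref{GG1} at $-z$ with $x\mapsto -x$, rewrite the right side with $\sum_{n\geq i}\binom{n-1}{i-1}w^{n}=w^{i}/(1-w)^{i}$, and recognize the result as the second-kind (resp.\ first-kind) generating function evaluated at $z/(1-qz)$, using $1+q\frac{z}{1-qz}=\frac{1}{1-qz}$; your explicit check of the substitution is exactly what the paper leaves implicit. The only point worth noting (shared with the paper) is that the $i=0$ constant term $l^{a}/a^{k}$ is absent from the right-hand sum, which is precisely why the identities are stated only for $n\geq1$.
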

\begin{proof}
Applying generating function formula, we can have
\begin{align*}%
{\displaystyle\sum\limits_{n\geq0}}
\left(  -1\right)  ^{n}\frac{\mathcal{C}_{n,m}^{\left(  k\right)  }\left(
a,q,L;-x\right)  }{n!}z^{n}  & =%
{\displaystyle\sum\limits_{n\geq0}}
\mathcal{C}_{n,m}^{\left(  k\right)  }\left(  a,q,L;-x\right)  \frac{\left(
-z\right)  ^{n}}{n!}\\
& =\left(  1-qz\right)  ^{\frac{x}{q}}\frac{l^{a}\left(  a+m\right)  ^{k}%
}{a^{k}}\operatorname*{Lif}\nolimits_{k}\left(  \frac{l\ln\left(  1-qz\right)
}{q};a+m\right)  .
\end{align*}
On the other hand,
\[%
{\displaystyle\sum\limits_{n\geq0}}
\left(
{\displaystyle\sum\limits_{i=1}^{n}}
\binom{n-1}{i-1}q^{n-i}\frac{\widehat{\mathcal{C}}_{i,m}^{\left(  k\right)
}\left(  a,q,L;x\right)  }{i!}\right)  z^{n}=%
{\displaystyle\sum\limits_{i\geq1}}
\frac{\widehat{\mathcal{C}}_{i,m}^{\left(  k\right)  }\left(  a,q,L;x\right)
}{i!q^{i}}%
{\displaystyle\sum\limits_{n\geq i}}
\binom{n-1}{i-1}\left(  qz\right)  ^{n}.
\]
Since
\[%
{\displaystyle\sum\limits_{n\geq i}}
\binom{n-1}{i-1}z^{n}=\frac{z^{i}}{\left(  1-z\right)  ^{i}},
\]
we get
\begin{align*}%
{\displaystyle\sum\limits_{n\geq0}}
\left(
{\displaystyle\sum\limits_{i=1}^{n}}
q^{n-i}\binom{n-1}{i-1}\frac{\widehat{\mathcal{C}}_{i,m}^{\left(  k\right)
}\left(  a,q,L;x\right)  }{i!}\right)  z^{n} &  =%
{\displaystyle\sum\limits_{i\geq1}}
\frac{\widehat{\mathcal{C}}_{i,m}^{\left(  k\right)  }\left(  a,q,L;x\right)
}{i!}\frac{z^{i}}{\left(  1-qz\right)  ^{i}}\\
= &  \left(  1-qz\right)  ^{\frac{x}{q}}\frac{l^{a}\left(  a+m\right)  ^{k}%
}{a^{k}}\operatorname*{Lif}\nolimits_{k}\left(  \frac{l\ln\left(  1-qz\right)
}{q};a+m\right).
\end{align*}
Thus, the proof of (\ref{zR1}) is completed. Using the same method we can prove
(\ref{zR2}).
\end{proof}
\section{The generalized $m-$poly-Bernoulli polynomials}
\begin{definition}
Let $n\geq0,m\geq0$ and $k\geq1$ be integers, and let $a,q$ and $l_{1}%
,...,l_{k\text{ }}$ non-zero real numbers, with $L=\left(  l_{1},...,l_{k\text{
}}\right)  $ and $l=\prod_{i=1}^{k}l_{i}$. We define the generalized $m$-poly-Bernoulli
polynomials $\mathcal{B}_{n,m}^{(k)}\left(  a,q,L;x\right)
$ by the formula
\begin{equation}
\frac{ql^{a-1}\left(  a+m\right)  ^{k}}{a^{k}}\frac{\operatorname*{Li}_{k}\left(  \frac{l}%
{q}\left(  1-e^{-qz}\right)  ;a+m-1\right)  }{e^{qzx}\left(  1-e^{-qz}\right)
}=\sum_{n\geq0}\mathcal{B}_{n,m}^{(k)}\left(  a,q,L;x\right)  \frac{z^{n}}%
{n!},\label{For4}
\end{equation}
with
\[
\mathcal{B}_{n,m}^{(k)}\left(  a,q,L\right):=\mathcal{B}_{n,m}^{(k)}\left(  a,q,L;0\right).
\]
\end{definition}
The following expression will prove that the polynomials $\mathcal{B}_{n,m}^{(k)}\left(  a,q,L;x\right)
$ can be described in terms of $\mathcal{S}_{n}^{i}\left(  x\right) $:
\begin{theorem}\label{th17}
 For $\mathcal{B}_{n,m}^{(k)}\left(  a,q,L;x\right),  $ we have
\begin{equation}
\mathcal{B}_{n,m}^{(k)}\left(  a,q,L;x\right)  =\frac{\left(  a+m\right)
^{k}}{a^{k}}\sum_{i=0}^{n}\frac{i!\left(  -q\right)  ^{n-i}l^{i+a}}{\left(
a+m+i\right)  ^{k}}\mathcal{S}_{n}^{i}\left(  x\right)  \label{For5}.
\end{equation}
\end{theorem}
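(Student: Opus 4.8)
The plan is to follow the same scheme as the proof of Theorem~\ref{GenFi}, but with the generating function \eqref{GENS} for the weighted Stirling numbers of the second kind playing the role that the $r$-Stirling generating function \eqref{GSti} played there; the extra factor $e^{-qzx}$ in \eqref{For4} is exactly what upgrades $\genfrac{\{}{\}}{0pt}{}{n}{i}$ to $\mathcal{S}_{n}^{i}(x)$.

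First I would expand the polylogarithm in the left-hand side of \eqref{For4} by its defining series $\operatorname*{Li}_{k}(w;b)=\sum_{i\geq1}w^{i}/(i+b)^{k}$ with $w=\tfrac{l}{q}(1-e^{-qz})$ and $b=a+m-1$. Cancelling one factor $1-e^{-qz}$ from the denominator and shifting the summation index $i\mapsto i+1$ turns $i+a+m-1$ into $i+a+m$; after absorbing the constant $ql^{a-1}(a+m)^{k}/a^{k}$ (using $ql^{a-1}(l/q)^{i+1}=l^{a+i}q^{-i}$) the left-hand side of \eqref{For4} becomes
\[
\frac{(a+m)^{k}}{a^{k}}\sum_{i\geq0}\frac{l^{a+i}q^{-i}}{(i+a+m)^{k}}\,e^{-qzx}\bigl(1-e^{-qz}\bigr)^{i}.
\]

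Next I would write $\bigl(1-e^{-qz}\bigr)^{i}=(-1)^{i}\bigl(e^{-qz}-1\bigr)^{i}$ and apply \eqref{GENS} after the substitution $z\mapsto -qz$, which gives
\[
e^{-qzx}\bigl(e^{-qz}-1\bigr)^{i}=i!\sum_{n\geq i}\mathcal{S}_{n}^{i}(x)\frac{(-qz)^{n}}{n!}.
\]
Substituting this back, interchanging the two (absolutely convergent, as formal power series) summations, and reading off the coefficient of $z^{n}/n!$ produces
\[
\mathcal{B}_{n,m}^{(k)}(a,q,L;x)=\frac{(a+m)^{k}}{a^{k}}\sum_{i=0}^{n}\frac{i!\,l^{i+a}}{(a+m+i)^{k}}\,q^{-i}(-1)^{i}(-1)^{n}q^{n}\,\mathcal{S}_{n}^{i}(x).
\]

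The only delicate point is the sign/exponent bookkeeping at the end: since $(-1)^{i}(-1)^{n}=(-1)^{n+i}=(-1)^{n-i}$ and $q^{-i}q^{n}=q^{n-i}$, one has $q^{-i}(-1)^{i}(-1)^{n}q^{n}=(-q)^{n-i}$, which turns the displayed coefficient into the claimed one and establishes \eqref{For5}. I expect this index shift ($i\mapsto i+1$, converting $a+m-1$ to $a+m$) combined with the sign collapse to be the main thing to get right; the rest is the routine power-series manipulation already used in the proof of Theorem~\ref{GenFi}.
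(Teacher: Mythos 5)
Your proposal is correct and follows essentially the same route as the paper's proof: expand $\operatorname*{Li}_{k}$ by its defining series, cancel one factor $1-e^{-qz}$ and shift $i\mapsto i+1$, then use the generating function \eqref{GENS} with $z\mapsto -qz$ to introduce $\mathcal{S}_{n}^{i}(x)$ and compare coefficients of $z^{n}/n!$. The sign and power bookkeeping $q^{-i}(-1)^{i}(-1)^{n}q^{n}=(-q)^{n-i}$ is handled exactly as in the paper, so nothing is missing.
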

\begin{proof}
Equations (\ref{For4}) and (\ref{GENS}) give
\begin{align*}
\sum_{n\geq0}\mathcal{B}_{n,m}^{(k)}\left(  a,q,L;x\right)  \frac{z^{n}}{n!}
& =\frac{ql^{a-1}\left(  a+m\right)  ^{k}}{a^{k}e^{qzx}\left(  1-e^{-qz}%
\right)  }\sum_{i\geq1}\frac{\left(  \frac{l}{q}\left(  1-e^{-qz}\right)
\right)  ^{i}}{\left(  i+a+m-1\right)  ^{k}}\\
& =\frac{l^{a}\left(  a+m\right)  ^{k}}{a^{k}e^{qzx}}\sum_{i\geq0}%
\frac{\left(  \frac{l}{q}\left(  1-e^{-qz}\right)  \right)  ^{i}}{\left(
i+a+m\right)  ^{k}}\\
& =\frac{\left(  a+m\right)  ^{k}}{a^{k}}\sum_{i\geq0}\frac{\left(  -1\right)
^{i}i!l^{i+a}}{q^{i}\left(  i+a+m\right)  ^{k}}\sum_{n\geq i}\mathcal{S}_{n}^{i}\left(
x\right)  \frac{\left(  -qz\right)  ^{n}}{n!}\\
& =\sum_{n\geq0}\left(  \frac{\left(  a+m\right)  ^{k}}{a^{k}}\sum_{i=0}%
^{n}\frac{\left(  -q\right)  ^{n-i}i!l^{i+a}}{\left(  i+a+m\right)  ^{k}}%
\mathcal{S}_{n}^{i}\left(  x\right)  \right)  \frac{z^{n}}{n!}.
\end{align*}
By equating two power series, we get the result.
\end{proof}
In particular,
\[
\mathcal{B}_{n,m}^{(k)}\left(  a,q,L;r\right)  =\frac{\left(  a+m\right)
^{k}}{a^{k}}\sum_{i=0}^{n}\frac{i!\left(  -q\right)  ^{n-i}l^{i+a}}{\left(
a+m+i\right)  ^{k}}%
\genfrac{\{}{\}}{0pt}{}{n+r}{i+r}%
_{r}
\]
and%
\[
\mathcal{B}_{n,m}^{(k)}\left(  a,q,L;\frac{r}{h}\right)  =\frac{\left(
a+m\right)  ^{k}}{a^{k}}\sum_{i=0}^{n}\frac{1}{h^{n-i}}\frac{i!\left(
-q\right)  ^{n-i}l^{i+a}}{\left(  a+m+i\right)  ^{k}}W_{h,r}\left(
n,i\right),
\]
where $\mathcal{S}_{n}^{i}\left(  x\right)  $ are related to $%
\genfrac{\{}{\}}{0pt}{}{n}{i}%
_{r}$ and $r$-Whitney numbers of the second kind $W_{h,r}\left(  n,i\right)  $
by%
\[
\mathcal{S}_{n}^{i}\left(  r\right)  =%
\genfrac{\{}{\}}{0pt}{}{n+r}{i+r}%
_{r}\ \text{and }\ \mathcal{S}_{n}^{i}\left(  \frac{r}{h}\right)  =\frac{1}{h^{n-i}}%
W_{h,r}\left(  n,i\right).
\]
\begin{theorem}
We have
\begin{equation}
\sum_{i=0}^{n}\mathcal{T}_{n}^{i}\left(  x\right)  (-q)^{n-i}\mathcal{B}_{i,m}%
^{(k)}\left(  a,q,L;x\right)  =\frac{\left(  a+m\right)  ^{k}n!l^{n+a}}%
{a^{k}\left(  n+a+m\right)  ^{k}}\label{For6}.
\end{equation}
\end{theorem}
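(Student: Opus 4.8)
The plan is to recognize the left-hand side of \eqref{For6} as an instance of the weighted Stirling transform applied to the explicit formula of Theorem~\ref{th17}. First I would start from that identity,
\[
\mathcal{B}_{n,m}^{(k)}\left(  a,q,L;x\right)  =\frac{\left(  a+m\right)^{k}}{a^{k}}\sum_{i=0}^{n}\frac{i!\left(  -q\right)  ^{n-i}l^{i+a}}{\left(a+m+i\right)  ^{k}}\mathcal{S}_{n}^{i}\left(  x\right),
\]
and pull the factor $\left(-q\right)^{n}$ outside the sum, so that
\[
\left(  -q\right)  ^{-n}\mathcal{B}_{n,m}^{(k)}\left(  a,q,L;x\right)  =\sum_{i=0}^{n}\mathcal{S}_{n}^{i}\left(  x\right)  \beta_{i},\qquad\beta_{i}:=\frac{\left(  a+m\right)  ^{k}\,i!\,l^{i+a}}{a^{k}\left(  -q\right)  ^{i}\left(  a+m+i\right)  ^{k}}.
\]
The point of this normalization is that $\beta_{i}$ no longer depends on $n$, which is precisely the shape needed to invoke the transform.

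Next I would apply the weighted Stirling transform pair already used in the proof of Theorem~\ref{thm14} (see \cite{Car1}), namely $\alpha_{n}=\sum_{i=0}^{n}\mathcal{S}_{n}^{i}\left(  x\right)  \beta_{i}$ if and only if $\beta_{n}=\sum_{i=0}^{n}\mathcal{T}_{n}^{i}\left(  x\right)  \alpha_{i}$, with the choice $\alpha_{n}:=\left(  -q\right)  ^{-n}\mathcal{B}_{n,m}^{(k)}\left(  a,q,L;x\right)$. Inversion then yields $\beta_{n}=\sum_{i=0}^{n}\mathcal{T}_{n}^{i}\left(  x\right)  \left(  -q\right)  ^{-i}\mathcal{B}_{i,m}^{(k)}\left(  a,q,L;x\right)$. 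Substituting the explicit value of $\beta_{n}$ and multiplying through by $\left(  -q\right)  ^{n}$ gives exactly
\[
\sum_{i=0}^{n}\mathcal{T}_{n}^{i}\left(  x\right)  \left(  -q\right)  ^{n-i}\mathcal{B}_{i,m}^{(k)}\left(  a,q,L;x\right)  =\frac{\left(  a+m\right)  ^{k}n!\,l^{n+a}}{a^{k}\left(  n+a+m\right)  ^{k}},
\]
which is \eqref{For6}.

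I expect no genuine analytic obstacle here; the only delicate point is the bookkeeping of the sign and $q$ powers, i.e.\ verifying that after factoring out $\left(-q\right)^{n}$ the coefficient sequence $\beta_{i}$ is truly independent of $n$, so that the weighted Stirling transform is legitimately applicable. Once that normalization is in place the identity drops out of the inversion formula in the same way Theorem~\ref{thm14} follows from Theorem~\ref{thm13}, and just as the corollary following Theorem~\ref{GenFi} is the $x=0$ specialization ($\mathcal{T}_{n}^{i}\left(  0\right)  =s\left(  n,i\right)$) of the present identity.
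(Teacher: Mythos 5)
Your proposal is correct and follows essentially the same route as the paper: the paper's (one-line) proof likewise applies the weighted Stirling transform pair $\alpha_{n}=\sum_{i}\mathcal{S}_{n}^{i}(x)\beta_{i}\Leftrightarrow\beta_{n}=\sum_{i}\mathcal{T}_{n}^{i}(x)\alpha_{i}$ to the explicit formula of Theorem~\ref{th17}. Your normalization by $(-q)^{n}$, making the coefficient sequence $\beta_{i}$ independent of $n$ before inverting, is exactly the bookkeeping the paper leaves implicit, and your sign and $q$-power tracking checks out.
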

\begin{proof}
By using the weighted Stirling transform and Theorem \ref{th17}, we get the result.
\end{proof}
\begin{theorem}
An explicit expression for $\mathcal{B}_{n,m}^{(k)}\left(
a,q,L;x\right)  $ is given by
\begin{equation}
\mathcal{B}_{n,m}^{(k)}\left(  a,q,L;x\right)  =\sum_{i=0}^{n}\binom{n}%
{i}\left(  -q\right)  ^{n-i}\mathcal{B}_{i,m}^{(k)}\left(  a,q,L\right)
\text{ }x^{n-i}\label{For7}.
\end{equation}
\end{theorem}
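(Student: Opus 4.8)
The plan is to compare the defining generating function (\ref{For4}) for the polynomials $\mathcal{B}_{n,m}^{(k)}(a,q,L;x)$ with the defining generating function (\ref{GenFin}) for the numbers $\mathcal{B}_{n,m}^{(k)}(a,q,L)=\mathcal{B}_{n,m}^{(k)}(a,q,L;0)$. The first thing I would note is that the right-hand side of (\ref{For4}) is obtained from the right-hand side of (\ref{GenFin}) merely by inserting the factor $e^{-qzx}$ in front, since the only difference between the two bracketed expressions is the extra $e^{qzx}$ in the denominator of (\ref{For4}) (and setting $x=0$ indeed recovers (\ref{GenFin}) because $e^{0}=1$). Consequently
\[
\sum_{n\geq0}\mathcal{B}_{n,m}^{(k)}(a,q,L;x)\frac{z^{n}}{n!}=e^{-qzx}\sum_{n\geq0}\mathcal{B}_{n,m}^{(k)}(a,q,L)\frac{z^{n}}{n!}.
\]

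Next I would expand $e^{-qzx}=\sum_{j\geq0}\frac{(-qx)^{j}}{j!}z^{j}$ and form the Cauchy product of the two power series on the right. Collecting the coefficient of $z^{n}/n!$ produces $\sum_{i=0}^{n}\binom{n}{i}(-qx)^{n-i}\mathcal{B}_{i,m}^{(k)}(a,q,L)$, and since $(-qx)^{n-i}=(-q)^{n-i}x^{n-i}$, a comparison of coefficients with the left-hand side (using uniqueness of the coefficients of a formal power series) gives exactly (\ref{For7}).

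There is no genuine obstacle here; the argument is a routine generating-function manipulation in the same spirit as the proof of Theorem \ref{th17}. The only points requiring a little care are the correct identification of the two generating functions as differing precisely by the factor $e^{-qzx}$ (in particular the sign of the exponent) and the bookkeeping of the binomial coefficients and the powers of $-q$ in the Cauchy product. Alternatively, one could start from the explicit formula (\ref{For5}) for $\mathcal{B}_{n,m}^{(k)}(a,q,L;x)$, substitute the addition formula $\mathcal{S}_{n}^{i}(x)=\sum_{j=i}^{n}\binom{n}{j}x^{n-j}\genfrac{\{}{\}}{0pt}{}{j}{i}$ for the weighted Stirling numbers of the second kind, and reorganise the resulting double sum so that the inner sum reproduces the explicit formula from Theorem \ref{GenFi} for $\mathcal{B}_{i,m}^{(k)}(a,q,L)$; but the generating-function route above is shorter and cleaner.
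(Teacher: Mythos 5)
Your proposal is correct and follows essentially the same route as the paper's own proof: factor the generating function (\ref{For4}) as $e^{-qzx}$ times the generating function (\ref{GenFin}) of the numbers, take the Cauchy product, and compare coefficients of $z^{n}/n!$. Nothing is missing.
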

\begin{proof}
\eqref{For4} can be written as:
\begin{align*}
\sum_{n\geq0}\mathcal{B}_{n,m}^{(k)}\left(  a,q,L;x\right)  \frac{z^{n}}{n!}
& =\frac{ql^{a-1}\left(  a+m\right)  ^{k}}{a^{k}}\frac{\operatorname*{Li}_{k}\left(  \frac
{l}{q}\left(  1-e^{-qz}\right)  ;a+m-1\right)  }{e^{qzx}\left(  1-e^{-qz}%
\right)  }\\
& =e^{-qzx}\frac{ql^{a-1}\left(  a+m\right)  ^{k}}{a^{k}}\frac{\operatorname*{Li}_{k}\left(
\frac{l}{q}\left(  1-e^{-qz}\right)  ;a+m-1\right)  }{\left(  1-e^{-qz}%
\right)  }\\
& =e^{-qzx}\sum_{n\geq0}\mathcal{B}_{n,m}^{(k)}\left(  a,q,L\right)
\frac{z^{n}}{n!}\\
& =\sum_{n\geq0}\left(  \sum_{i=0}^{n}\binom{n}{i}\left(  -q\right)
^{n-i}\mathcal{B}_{i,m}^{(k)}\left(  a,q,L\right)  \text{ }x^{n-i}\right)
\frac{z^{n}}{n!},
\end{align*}
which finishes the proof.
\end{proof}
\begin{theorem}
 $\mathcal{B}_{n,m}^{(-k)}\left(
a,q,L;x\right)  $ has the double exponential generating function
\[
\sum_{n\geq0}\sum_{k\geq0}\mathcal{B}_{n,m}^{(-k)}\left(  a,q,L;x\right)
\frac{x^{n}}{n!}\frac{y^{k}}{k!}=\frac{l^{a}e^{ay-qzx}}{1-\frac{l}{q}\left(
1-e^{-qz}\right)  e^{\frac{ay}{a+m}}}.
\]
\end{theorem}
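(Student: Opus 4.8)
The plan is to mimic the proof of the double exponential generating function for $\mathcal{B}_{n,m}^{(-k)}(a,q,L)$ given earlier in the paper, carrying the extra polynomial factor $e^{-qzx}$ through the computation unchanged. First I would start from the defining relation \eqref{For4} and expand $\operatorname*{Li}\nolimits_{k}$ by its series, so that the left-hand side of \eqref{For4} becomes
\[
\frac{ql^{a-1}(a+m)^{k}}{a^{k}e^{qzx}(1-e^{-qz})}\sum_{i\geq1}\frac{\left(\tfrac{l}{q}(1-e^{-qz})\right)^{i}}{(i+a+m-1)^{k}}.
\]
Shifting the summation index $i\mapsto i+1$ absorbs the denominator $1-e^{-qz}$ together with the spare factor $q/l$, which leaves
\[
\sum_{n\geq0}\mathcal{B}_{n,m}^{(k)}(a,q,L;x)\frac{z^{n}}{n!}=\frac{l^{a}(a+m)^{k}}{a^{k}e^{qzx}}\sum_{p\geq0}\frac{\left(\tfrac{l}{q}(1-e^{-qz})\right)^{p}}{(p+a+m)^{k}}.
\]

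Next I would substitute $k\mapsto -k$, multiply by $y^{k}/k!$, sum over $k\geq0$, and interchange the $p$- and $k$-summations. Using $\dfrac{a^{k}(p+a+m)^{k}}{(a+m)^{k}}=\left(\dfrac{a(p+a+m)}{a+m}\right)^{k}$ and
\[
\sum_{k\geq0}\left(\frac{a(p+a+m)}{a+m}\,y\right)^{k}\frac{1}{k!}=e^{ay}\left(e^{ay/(a+m)}\right)^{p},
\]
the $k$-sum collapses, and what remains is the geometric series
\[
l^{a}e^{ay-qzx}\sum_{p\geq0}\left(\tfrac{l}{q}(1-e^{-qz})\,e^{ay/(a+m)}\right)^{p}=\frac{l^{a}e^{ay-qzx}}{1-\tfrac{l}{q}(1-e^{-qz})\,e^{ay/(a+m)}},
\]
which is the asserted identity (up to the evident misprint $x^{n}\leftrightarrow z^{n}$ in the statement).

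The whole argument is routine bookkeeping and I do not anticipate a genuine obstacle. The only points deserving a line of justification are the interchange of the two summations and the summation of the geometric series in $p$; both are legitimate in the ring of formal power series in $z$ and $y$, since $1-e^{-qz}$ has zero constant term, exactly as in the earlier double-generating-function theorems for $\mathcal{B}_{n,m}^{(-k)}(a,q,L)$ and $\widehat{\mathcal{C}}_{n,m}^{(-k)}(a,q,L)$.
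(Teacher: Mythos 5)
Your argument is correct and is essentially the paper's own proof: expand the polylogarithm in \eqref{For4} (with $k\mapsto-k$), absorb the factor $\frac{q}{l}(1-e^{-qz})$ by an index shift, interchange the $p$- and $k$-sums, sum the exponential series in $k$ and then the geometric series in $p$, with the factor $e^{-qzx}$ carried along untouched. You also correctly note the $x^{n}$ versus $z^{n}$ misprint in the stated identity, which the paper's proof silently shares.
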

\begin{proof}
From (\ref{For4}), we have
\begin{align*}
\sum_{n\geq0}\sum_{k\geq0}\mathcal{B}_{n,m}^{(-k)}\left(  a,q,L;x\right)
\frac{x^{n}}{n!}\frac{y^{k}}{k!}  & =\sum_{k\geq0}\left(  \frac{e^{-qzx}%
ql^{a-1}a^{k}}{\left(  a+m\right)  ^{k}}\sum_{p\geq0}\frac{\left(  \frac{l}%
{q}\left(  1-e^{-qz}\right)  \right)  ^{p}\left(  a+m+p-1\right)  ^{k}%
}{\left(  1-e^{-qz}\right)  }\right)  \frac{y^{k}}{k!}\\
& =l^{a}e^{-qzx}\sum_{p\geq1}\left(  \frac{l}{q}\left(  1-e^{-qz}\right)
\right)  ^{p-1}\sum_{k\geq0}\left(  \frac{a\left(  a+m+p-1\right)  y}{\left(
a+m\right)  }\right)  ^{k}\frac{1}{k!}\\
& =l^{a}e^{ay}e^{-qzx}\sum_{p\geq1}\left(  \frac{l}{q}\left(  1-e^{-qz}%
\right)  e^{\frac{ay}{a+m}}\right)  ^{p-1}\\
& =\frac{l^{a}e^{ay-qzx}}{1-\frac{l}{q}\left(  1-e^{-qz}\right)  e^{\frac
{ay}{a+m}}}.
\end{align*}
\end{proof}
Next, we state some connections between the generalized
$m$-poly-Bernoulli polynomials and the generalized $m$-poly-Cauchy polynomials.
\begin{theorem}\label{thm200}
For $n\geq0,$
\begin{align}
\mathcal{B}_{n,m}^{(k)}\left(  a,q,L;x\right)   & =\sum_{i=0}^{n}i!\left(
-q\right)  ^{n-i}\mathcal{S}_{n}^{i}\left(  x\right)  \sum_{j=0}^{n}q^{i-j}\mathcal{S}_{i}%
^{j}\left(  \frac{x}{q}\right)  \mathcal{C}_{j,m}^{(k)}\left(  a,q,L;x\right)\label{For9}. \\
\mathcal{B}_{n,m}^{(k)}\left(  a,q,L;x\right)   & =\left(  -1\right)  ^{n}\sum_{j=0}^{n}q^{n-j}\sum_{i=0}^{n}i!\mathcal{S}_{n}%
^{i}\left(  x\right)  \mathcal{S}_{i}^{j}\left(  \frac{x}{q}\right)  \widehat
{\mathcal{C}}_{j,m}^{(k)}\left(  a,q,L;x\right)  \label{For10}
\end{align}
and
\begin{align}
\mathcal{C}_{n,m}^{(k)}\left(  a,q,L;x\right)   & =\sum_{j=0}^{n}q^{n-j}%
\sum_{i=0}^{n}\frac{\left(  -1\right)  ^{i-j}}{i!}\mathcal{T}_{n}^{i}\left(  \frac{x}%
{q}\right)  \mathcal{T}_{i}^{j}\left(  x\right)  \mathcal{B}_{j,m}^{(k)}\left(
a,q,L;x\right)  \label{For11}. \\
\widehat{\mathcal{C}}_{n,m}^{(k)}\left(  a,q,L;x\right)   & =\sum_{j=0}%
^{n}q^{n-j}\sum_{i=0}^{n}\frac{\left(  -1\right)  ^{j}}{i!}\mathcal{T}_{n}^{i}\left(
\frac{x}{q}\right)  \mathcal{T}_{i}^{j}\left(  x\right)  \mathcal{B}_{j,m}^{(k)}\left(
a,q,L;x\right)  \label{For12}.
\end{align}
 \end{theorem}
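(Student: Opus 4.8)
The plan is to prove the four identities by a single uniform argument: substitute into the right-hand side the explicit expansions already at hand --- Theorem~\ref{th17} for $\mathcal{B}_{n,m}^{(k)}\left(a,q,L;x\right)$, Theorem~\ref{thm13} for $\mathcal{C}_{n,m}^{(k)}\left(a,q,L;x\right)$, and \eqref{ws} for $\widehat{\mathcal{C}}_{n,m}^{(k)}\left(a,q,L;x\right)$ --- then interchange the three resulting summations and collapse the innermost one via the orthogonality of the weighted Stirling numbers. That orthogonality follows at once from the two mutually inverse weighted Stirling transforms recalled in the proof of Theorem~\ref{thm14}: substituting one into the other gives
\[
\sum_{i}\mathcal{T}_{n}^{i}\left(y\right)\mathcal{S}_{i}^{j}\left(y\right)=\sum_{i}\mathcal{S}_{n}^{i}\left(y\right)\mathcal{T}_{i}^{j}\left(y\right)=\delta_{n,j}
\]
for every real $y$, where $\delta_{n,j}$ is the Kronecker symbol. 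The two arguments appearing in each identity, $x$ and $x/q$, are precisely those that make such a collapse possible, since $\mathcal{C}_{n,m}^{(k)}$ and $\widehat{\mathcal{C}}_{n,m}^{(k)}$ carry a weighted Stirling number $\mathcal{T}_{\bullet}^{\bullet}\left(x/q\right)$ of the first kind while $\mathcal{B}_{n,m}^{(k)}$ carries one $\mathcal{S}_{\bullet}^{\bullet}\left(x\right)$ of the second kind.

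I would carry out \eqref{For11} as the model case. Inserting Theorem~\ref{th17} (with running index $p$) into the right-hand side produces a triple sum over $i,j,p$; combining the sign and power factors by $\left(-1\right)^{i-j}\left(-q\right)^{j-p}=\left(-1\right)^{i-p}q^{j-p}$ and $q^{n-j}q^{j-p}=q^{n-p}$ turns it into
\[
\frac{\left(a+m\right)^{k}}{a^{k}}\sum_{p}\frac{q^{n-p}l^{p+a}}{\left(a+m+p\right)^{k}}\sum_{i}\frac{\left(-1\right)^{i-p}p!}{i!}\,\mathcal{T}_{n}^{i}\!\left(\tfrac{x}{q}\right)\Bigl(\sum_{j}\mathcal{T}_{i}^{j}\left(x\right)\mathcal{S}_{j}^{p}\left(x\right)\Bigr).
\]
The inner $j$-sum equals $\delta_{i,p}$, so the $i$-sum collapses to the term $i=p$, and what is left is precisely the expansion of $\mathcal{C}_{n,m}^{(k)}\left(a,q,L;x\right)$ furnished by Theorem~\ref{thm13}. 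The companion identity \eqref{For9} is the same computation run in the reverse direction: after inserting Theorem~\ref{thm13}, the inner sum is $\sum_{j}\mathcal{S}_{i}^{j}\!\left(\tfrac{x}{q}\right)\mathcal{T}_{j}^{p}\!\left(\tfrac{x}{q}\right)=\delta_{i,p}$, the collected powers reduce to $\left(-1\right)^{n-i}q^{n-p}$, and after the collapse $i=p$ this is $\left(-q\right)^{n-p}$, recovering Theorem~\ref{th17}.

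Identities \eqref{For10} and \eqref{For12} go through verbatim; only the sign bookkeeping changes. For \eqref{For10} one combines the overall $\left(-1\right)^{n}$, the powers $q^{n-j}q^{j-p}=q^{n-p}$ and the sign $\left(-1\right)^{p}$ coming from \eqref{ws}, and after the collapse $i=p$ uses $\left(-1\right)^{n}\left(-1\right)^{p}=\left(-1\right)^{n-p}$ to obtain the factor $\left(-q\right)^{n-p}$ of Theorem~\ref{th17}; for \eqref{For12} one uses $q^{n-j}\left(-q\right)^{j-p}\left(-1\right)^{j}=\left(-1\right)^{p}q^{n-p}$, so that after the collapse the surviving factor $\left(-1\right)^{p}q^{n-p}$ matches $\left(-1\right)^{i}q^{n-i}$ in \eqref{ws}. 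The only genuine labour --- and the step most likely to hide a slip --- is this accounting of signs and powers of $q$ while the three nested sums are reordered; the combinatorial content is entirely the single orthogonality relation above, and the whole computation is the polynomial refinement of the proofs given in Section~3 for the corresponding identities on the generalized $m$-poly-Cauchy and $m$-poly-Bernoulli numbers, into which it specializes when $\mathcal{T}_{n}^{i}$ and $\mathcal{S}_{n}^{i}$ degenerate to $s(n,i)$ and $\genfrac{\{}{\}}{0pt}{}{n}{i}$.
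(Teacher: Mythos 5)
Your proposal is correct and follows essentially the same route as the paper: both arguments reorder the double sum, collapse the inner sum via the inverse (orthogonality) property of the weighted Stirling transform, and then match the result against the explicit expansions in Theorem~\ref{th17}, Theorem~\ref{thm13} and \eqref{ws}. The only difference is one of packaging --- the paper invokes the already-proved identities Theorem~\ref{thm14}, \eqref{For6} and the inverse transform of \eqref{ws} to perform the inner collapse in one step, whereas you re-derive those collapses in-line from the relation $\sum_{i}\mathcal{T}_{n}^{i}(y)\mathcal{S}_{i}^{j}(y)=\sum_{i}\mathcal{S}_{n}^{i}(y)\mathcal{T}_{i}^{j}(y)=\delta_{n,j}$, with the same sign and $q$-power bookkeeping.
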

\begin{proof}
Theorem \ref{thm14} and (\ref{For5}) give
\begin{align*}
RHS  & =\sum_{i=0}^{n}\left(  -q\right)  ^{n-i}\sum_{j=0}^{n}q^{i-j}%
i!\mathcal{S}_{n}^{i}\left(  x\right)  \mathcal{S}_{i}^{j}\left(  \frac{x}{q}\right)
\mathcal{C}_{j,m}^{(k)}\left(  a,q,L;x\right)  \text{ \ }\\
& =\sum_{i=0}^{n}\left(  -q\right)  ^{n-i}i!\mathcal{S}_{n}^{i}\left(  x\right)
\sum_{j=0}^{i}q^{i-j}\mathcal{S}_{i}^{j}\left(  \frac{x}{q}\right)  \mathcal{C}%
_{j,m}^{(k)}\left(  a,q,L;x\right) \\
& =\sum_{i=0}^{n}\left(  -q\right)  ^{n-i}i!\mathcal{S}_{n}^{i}\left(  x\right)
\frac{\left(  a+m\right)  ^{k}l^{i+a}}{a^{k}\left(  a+i+m\right)  ^{k}}\\
& =\mathcal{B}_{n,m}^{(k)}\left(  a,q,L;x\right).
\end{align*}
This completes the proof of (\ref{For9}). For (\ref{For10}), using the inverse Stirling transform of (\ref{ws}) and the identity (\ref{For5}), we obtain
\begin{align*}
RHS  & =\left(  -1\right)  ^{n}\sum_{j=0}^{n}q^{n-j}\sum_{i=0}^{n}i!\mathcal{S}_{n}%
^{i}\left(  x\right)  \mathcal{S}_{i}^{j}\left(  \frac{x}{q}\right)  \widehat
{\mathcal{C}}_{j,m}^{(k)}\left(  a,q,L;x\right)  \text{ \ }\\
& =\sum_{i=0}^{n}\left(  -1\right)  ^{n}i!\mathcal{S}_{n}^{i}\left(  x\right)  q^{n}%
\sum_{j=0}^{i}q^{-j}\mathcal{S}_{i}^{j}\left(  \frac{x}{q}\right)  \widehat{\mathcal{C}%
}_{j,m}^{(k)}\left(  a,q,L;x\right) \\
& =\sum_{i=0}^{n}\left(  -1\right)  ^{n}i!\mathcal{S}_{n}^{i}\left(  x\right)
q^{n}\frac{\left(  a+m\right)  ^{k}\left(  -1\right)  ^{i}q^{-i}l^{i+a}}%
{a^{k}\left(  a+i+m\right)  ^{k}}\\
& =\sum_{i=0}^{n}\left(  -q\right)  ^{n-i}i!\mathcal{S}_{n}^{i}\left(  x\right)
\frac{\left(  a+m\right)  ^{k}l^{i+a}}{a^{k}\left(  a+i+m\right)  ^{k}}\\
& =\mathcal{B}_{n,m}^{(k)}\left(  a,q,L;x\right).
\end{align*}
This completes the proof of (\ref{For10}). For (\ref{For11}), by (\ref{For6}) and Theorem \ref{thm13}, we have%
\begin{align*}
RHS  & =\sum_{j=0}^{n}q^{n-j}\sum_{i=0}^{n}\frac{\left(  -1\right)  ^{i-j}%
}{i!}\mathcal{T}_{n}^{i}\left(  \frac{x}{q}\right)  \mathcal{T}_{i}^{j}\left(  x\right)
\mathcal{B}_{j,m}^{(k)}\left(  a,q,L;x\right)  \text{ }\\
& =\sum_{i=0}^{n}\frac{q^{n-i}}{i!}\mathcal{T}_{n}^{i}\left(  \frac{x}{q}\right)
\sum_{j=0}^{i}\left(  -q\right)  ^{i-j}\mathcal{T}_{i}^{j}\left(  x\right)
\mathcal{B}_{j,m}^{(k)}\left(  a,q,L;x\right) \\
& =\sum_{i=0}^{n}\frac{q^{n-i}}{i!}\mathcal{T}_{n}^{i}\left(  \frac{x}{q}\right)
\frac{\left(  a+m\right)  ^{k}i!l^{i+a}}{a^{k}\left(  a+i+m\right)  ^{k}}\\
& =\mathcal{C}_{n,m}^{(k)}\left(  a,q,L;x\right).
\end{align*}
This completes the proof of (\ref{For11}). The last identity (\ref{For12}) can be shown similarly.
\end{proof}

Now, we generalize (\ref{TGQ1}) to the polynomial case. For this, we define the  polynomials
$H_{n,p}\left(  m;x\right)  $ by means of the following expression%
\begin{equation}
H_{n,p}\left(  m;x\right)  :=H_{n,m,p}^{\left(  k\right)  }\left(
a,q,L;x\right)  =\sum_{i=0}^{n}\binom{n}{i}\left(  -q\right)  ^{n-i}%
H_{i,p}\left(  m\right)  \text{ }x^{n-i}\label{For13},
\end{equation}
with
\[
H_{0,p}\left(  m;x\right)  =\frac{l^{a}}{a^{k}}
\]
and
\[
H_{n,0}\left(
m;x\right)  =\mathcal{B}_{n,m}^{(k)}\left(  a,q,L;x\right)  .
\]
Here we can present another explicit expression for $H_{n,p}\left(  m;x\right) $
\[
H_{n,p}\left(  m;x\right)  =\frac{\left(  p+a+m\right)  ^{k}}{p!a^{k}}%
\sum_{i=0}^{n}  \frac{\left(  i+p\right)  !\left(
-q\right)  ^{n-i}l^{i+a}}{\left(  a+m+i+p\right)  ^{k}}\mathcal{S}_{n}^{i}\left(  x+p\right).
\]
\begin{theorem}
The $H_{n,p}\left(  m;x\right)  $ satisfies the following recurrence
\begin{equation}
\text{\ }H_{n,p+1}\left(  m;x\right)  =\frac{l\left(  p+1\right)  \left(
p+a+m\right)  ^{k}}{\left(  p+a+m+1\right)  ^{k}}H_{n+1,p}\left(  m;x\right)
-q \left(  x+p\right)  H_{n,p}\left(  m;x\right)  \label{For14},
\end{equation}
with $H_{0,p}\left(  m;x\right)  =\frac{l^{a}%
}{a^{k}}.$
\end{theorem}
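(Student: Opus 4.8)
The plan is to derive \eqref{For14} from the scalar recurrence \eqref{TGQ1} by transporting it through \eqref{For13}, which exhibits the polynomials $H_{n,p}(m;x)$ as a binomial transform of the numbers $H_{n,p}(m)$; the bookkeeping is cleanest with exponential generating functions. Put $\mathcal{A}_{p}(z):=\sum_{n\ge0}H_{n,p}(m)\,z^{n}/n!$ for the generating function of the $p$-th scalar sequence. Then \eqref{For13} says exactly that $\sum_{n\ge0}H_{n,p}(m;x)\,z^{n}/n!=e^{-qxz}\mathcal{A}_{p}(z)$, because multiplying an exponential generating function by $e^{-qxz}$ realizes the binomial convolution with the powers $(-qx)^{\bullet}$ occurring there. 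In the same way, multiplying \eqref{TGQ1} by $z^{n}/n!$ and summing over $n$ yields the differential--difference relation
\[
\mathcal{A}_{p}'(z)=\lambda_{p}\,\mathcal{A}_{p+1}(z)-pq\,\mathcal{A}_{p}(z),\qquad \lambda_{p}:=\frac{l(p+1)(p+a+m)^{k}}{(p+a+m+1)^{k}} .
\]

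First I would multiply this relation by $e^{-qxz}$ and invoke the elementary identity $\frac{d}{dz}\!\left(e^{-qxz}\mathcal{A}_{p}(z)\right)=e^{-qxz}\mathcal{A}_{p}'(z)-qx\,e^{-qxz}\mathcal{A}_{p}(z)$ to replace $e^{-qxz}\mathcal{A}_{p}'(z)$ by $\frac{d}{dz}\!\left(e^{-qxz}\mathcal{A}_{p}(z)\right)+qx\,e^{-qxz}\mathcal{A}_{p}(z)$. After gathering the two multiples of $e^{-qxz}\mathcal{A}_{p}(z)$ this becomes
\[
\frac{d}{dz}\!\left(e^{-qxz}\mathcal{A}_{p}(z)\right)=\lambda_{p}\,e^{-qxz}\mathcal{A}_{p+1}(z)-q(x+p)\,e^{-qxz}\mathcal{A}_{p}(z).
\]
Then I would read off the coefficient of $z^{n}/n!$, using that differentiation shifts the index up by one and that, by \eqref{For13}, $e^{-qxz}\mathcal{A}_{p}(z)$ and $e^{-qxz}\mathcal{A}_{p+1}(z)$ generate $\big(H_{n,p}(m;x)\big)_{n}$ and $\big(H_{n,p+1}(m;x)\big)_{n}$; this produces the three-term recurrence. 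The initial value is immediate: setting $n=0$ in \eqref{For13} and using $H_{0,p}(m)=l^{a}/a^{k}$ gives $H_{0,p}(m;x)=l^{a}/a^{k}$.

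The one delicate point is the interaction, in the middle step, between the index shift $n\mapsto n+1$ (differentiation in $z$) and the binomial translation by $x$ (the factor $e^{-qxz}$): these operations do not commute, and their commutator is precisely the extra term $qx\,e^{-qxz}\mathcal{A}_{p}(z)$ --- which is exactly what upgrades the scalar weight $pq$ in \eqref{TGQ1} to the polynomial weight $q(x+p)$ here. Everything else is routine manipulation of formal power series. One can also run the argument purely on coefficients: substitute \eqref{For13}, replace each $H_{i,p+1}(m)$ by its value from \eqref{TGQ1}, and recognize from \eqref{For13} the two identities $\sum_{i}\binom{n}{i}(-q)^{n-i}H_{i+1,p}(m)\,x^{n-i}=H_{n+1,p}(m;x)+qx\,H_{n,p}(m;x)$ and $\sum_{i}\binom{n}{i}(-q)^{n-i}H_{i,p}(m)\,x^{n-i}=H_{n,p}(m;x)$; the first of these is the coefficient form of the commutator just mentioned, and the required combination falls out.
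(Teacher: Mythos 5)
Your computation is correct, and it is essentially the paper's own argument in different clothing: the paper applies $x\frac{d}{dx}$ to \eqref{For13} and uses $\frac{d}{dx}H_{n,p}(m;x)=-nqH_{n-1,p}(m;x)$, which is precisely the coefficient-level version of your manipulation of $e^{-qxz}\mathcal{A}_{p}(z)$; your formulation makes the $qx$ ``commutator'' term transparent, and your closing coefficient identities are valid. The point you should not have glossed over is the final identification. What your derivation (and the paper's) actually yields is
\[
H_{n+1,p}\left(  m;x\right)  =\frac{l\left(  p+1\right)  \left(  p+a+m\right)^{k}}{\left(  p+a+m+1\right)^{k}}\,H_{n,p+1}\left(  m;x\right)  -q\left(  x+p\right)  H_{n,p}\left(  m;x\right),
\]
the exact polynomial analogue of \eqref{TGQ1}, whereas \eqref{For14} as printed has $H_{n,p+1}(m;x)$ and $H_{n+1,p}(m;x)$ interchanged. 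The two forms are not equivalent: already at $n=0$, where $H_{0,p}(m;x)=H_{0,p+1}(m;x)=l^{a}/a^{k}$ and $H_{1,p}(m;x)=\bigl(\lambda_{p}-q(x+p)\bigr)l^{a}/a^{k}$ with $\lambda_{p}=l(p+1)(p+a+m)^{k}/(p+a+m+1)^{k}$, the printed form fails identically in $x$. So \eqref{For14} is evidently a misprint, and your proof establishes the corrected relation; but writing ``this produces the three-term recurrence'' without noting the swap leaves a mismatch between what you proved and the literal claim, which you should flag explicitly.
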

\begin{proof}
By (\ref{For13}) and (\ref{TGQ1}), we can write%
\begin{align*}
x\frac{d}{dx}H_{n,p}\left(  m;x\right)   & =nH_{n,p}\left(  m;x\right)
-n\sum_{i=1}^{n}\binom{n}{i}\frac{i}{n}\left(  -q\right)  ^{n-i}H_{i,p}\left(
m\right)  x^{n-i}\\
& =nH_{n,p}\left(  m;x\right)  -n\sum_{i=1}^{n}\binom{n-1}{i-1}\left(
-q\right)  ^{n-i}H_{i,p}\left(  m\right)  x^{n-i}\\
& =nH_{n,p}\left(  m;x\right)  -n\sum_{i=0}^{n-1}\binom{n-1}{i}\left(
-q\right)  ^{n-i-1}H_{i+1,p}\left(  m\right)  x^{n-i-1}\\
& =nH_{n,p}\left(  m;x\right) -n\frac{l\left(  p+1\right)  \left(  p+a+m\right)  ^{k}}{\left(
p+a+m+1\right)  ^{k}}\sum_{i=0}^{n-1}\binom{n-1}{i}\left(  -q\right)
^{n-i-1}H_{i,p+1}\left(  m\right)  x^{n-i-1}\\
& +npq\sum_{i=0}^{n-1}\binom{n-1}{i}\left(  -q\right)  ^{n-i-1}H_{i,p}\left(
m\right)  x^{n-i-1}.
\end{align*}
Thus%
\begin{align*}
-xnqH_{n-1,p}\left(  m;x\right)   & =nH_{n,p}\left(  m;x\right)
-n\frac{l\left(  p+1\right)  \left(  p+a+m\right)  ^{k}}{\left(
p+a+m+1\right)  ^{k}}H_{n-1,p+1}\left(  m;x\right) \\
& +npqH_{n-1,p}\left(  m;x\right).
\end{align*}
This evidently equivalent to (\ref{For14}).
\end{proof}



\begin{thebibliography}{99}
	
\bibitem{Broder} {{ Broder, A.~Z.}}, The {$r$}-{S}tirling numbers, {\it Discrete Math.} { 49} (1984), 241--259.

\bibitem{Car1} {{ Carlitz, L.}}, Weighted Stirling numbers of the first and second kind. I. {\it Fibonacci Quart.} { 18} (1980), 147–-162

\bibitem{Car2} { { Carlitz, L.}}, Weighted Stirling numbers of the first and second kind. II. {\it Fibonacci Quart.} { 18} (1980), 242-–257

\bibitem{Comtet}{ { Comtet, L.}}, {\it Advanced Combinatorics}, D. Reidel Publishing Co., Dordrecht, 1974.
	
	
\bibitem{Komatsu1} 	{ {{Komatsu T.~, Laohakosol, V.  and Liptai, K. } }}, {A generalization of poly-{C}auchy numbers and their properties}, {\it Abstr. Appl. Anal.},  (2013), Art. 	ID 179841, 8.
	
\bibitem[9]{Komatsu3} 	{ {{ Komatsu, T. }}}, {Poly-{C}auchy numbers with a {$q$} parameter}, {\it Ramanujan J.}, { 31} (2013), 353--371.
\bibitem{Komatsu2} {{{ Komatsu, T. and Szalay, L.}}}, {Shifted poly-{C}auchy numbers}, \textit{Lith. Math. J.}, { 54} (2014), 166--181.
	
\bibitem{Merlini} { {{ Merlini, D. , Sprugnoli, R.  and Verri, M.~C} }}, {The {C}auchy numbers}, {\it Discrete Math.}, { 306} (2006), 1906--1920.
	
\bibitem{Rahmani2014} { {{ Rahmani, M.}}}, {Generalized {S}tirling transform}, {\it Miskolc Math. Notes}, {15} (2014), 677--690.
	
\bibitem{Rahmani2016} { {{ Rahmani, M. }}}, {On {$p$}-{C}auchy numbers}, {\it Filomat}, { 30} (2016), 2731--2742.
	

\end{thebibliography}
\end{document}